\newtheorem{thm}{Theorem}[section]
\newtheorem{lem}[thm]{Lemma}
\newtheorem{prop}[thm]{Proposition}
\newtheorem{cor}[thm]{Corollary}
\DeclareMathOperator{\Z}{\mathbb{Z}}
\DeclareMathOperator{\F}{\mathbb{F}}
\DeclareMathOperator{\one}{\mathbbm{1}}
\DeclareMathOperator{\SH}{\mathcal{SH}}
\DeclareMathOperator{\DM}{\mathcal{DM}}
\DeclareMathOperator{\X}{\mathfrak{X}}
\DeclareMathOperator{\A}{\mathcal{A}}
\DeclareMathOperator{\M}{\mathcal{M}}
\DeclareMathOperator{\G}{\mathcal{G}}
\DeclareMathOperator{\I}{\mathcal{I}}
\DeclareMathOperator{\T}{\mathcal{T}}
\DeclareMathOperator{\hl}{holim}
\DeclareMathOperator{\s}{\mathrm{S}}
\DeclareMathOperator{\hz}{\mathrm{H}{\mathbb Z}/2}
\DeclareMathOperator{\HM}{\mathrm{H}}
\DeclareMathOperator{\Mot}{\mathrm{M}}
\title{\textsc{Isotropic stable motivic homotopy groups of spheres}}
\author{Fabio Tanania}
\date{}
\begin{document}

\maketitle

\begin{abstract}
In this paper we explore the isotropic stable motivic homotopy category constructed from the usual stable motivic homotopy category, following the work of Vishik on isotropic motives (see \cite{V}), by killing anisotropic varieties. In particular, we focus on cohomology operations in the isotropic realm and study the structure of the isotropic Steenrod algebra. Then, we construct an isotropic version of the motivic Adams spectral sequence and apply it to find a complete description of the isotropic stable homotopy groups of the sphere spectrum, which happen to be isomorphic to the $Ext$-groups of the topological Steenrod algebra. At the end, we will see that this isomorphism is not only additive but respects higher products, completely identifying, as rings with higher structure, the cohomology of the classical Steenrod algebra with isotropic stable homotopy groups of spheres.	
	\end{abstract}

\section{Introduction}

A major problem in classical stable homotopy theory is the computation of the stable homotopy groups of the spheres. Many sophisticated tools have been implemented to tackle this problem during the years. One of this is for sure the Adams spectral sequence (see \cite{A}) which allows to obtain information on the stable homotopy groups of the sphere spectrum out of a purely algebraic object, namely the cohomology of the Steenrod algebra, which constitutes its $E_2$-page. As a consequence, this object has been deeply studied and much interest has arisen around its structure and properties with the final purpose of obtaining results on the stable homotopy groups.

In their groundbreaking work, Morel and Voevodsky have developed a homotopy theory for algebraic varieties, i.e. $A^1$-homotopy theory (see \cite{MV}), which has the enormous power of implementing topological techniques in the world of algebraic geometry. This unstable theory has, just as in topology, a stable counterpart, called stable motivic homotopy theory (see \cite{DLORV}). In this new and rich theory, it is possible to define a motivic sphere spectrum and to study its stable motivic homotopy groups, following the leading example provided by topology. As experience has shown, the motivic world is much more complicated than the topological one. Here cohomology theories are bigraded since there are two kinds of spheres, the simplicial one and the algebraic one, and even the motivic cohomology of the easiest object, namely the spectrum of the base field, is known to be much more complicated than its topological counterpart, i.e. the singular cohomology of a point. The motivic situation is as close as possible to the topological one if one considers algebraically closed fields, for example the complex numbers. In this case, topological realization functors come to help and many motivic phenomena can be studied with the help of topological insights. Nonetheless, even in this limit case, many exotic and non-classical phenomena appear, which definitely shows the deep richness of the motivic environment.

With the aim of studying stable motivic homotopy groups of spheres, techniques from topology have been adapted and new tools implemented. One of the key ingredients borrowed from topology for the computation of stable homotopy groups is the motivic Adams spectral sequence, a motivic version of its topological relative. This has been introduced by Morel in \cite{Mo2} and deeply studied by Dugger and Isaksen in \cite{DI}. Convergence of the spectral sequence has been analysed by Hu, Kriz and Ormsby in \cite{Hu} and \cite{Hu2}, by Kylling and Wilson in \cite{KW} and by Mantovani in \cite{Ma}. Then, stable homotopy groups of the motivic sphere spectrum have been studied over different base fields by several authors. Over complex numbers, there are important results by Dugger and Isaksen in \cite{DI}, by Guillou and Isaksen in \cite{GI} and by Andrews and Miller in \cite{AM}. Moreover, Levine showed in \cite{L} that over complex numbers the stable motivic homotopy groups of the sphere spectrum $\pi_{n,0}(\s)$ are isomorphic to the classical homotopy groups $\pi_n(\s)$. Over real numbers, there are results by Dugger and Isaksen in \cite{DI2} and by Guillou and Isaksen in \cite{GI2}. Over rational numbers there are results by Wilson in \cite{W} and over finite fields by Wilson and \O stv\ae r in \cite{WO}. Over general fields, there are available the complete description of the $0$th stem of the stable motivic homotopy groups of spheres by Morel in \cite{Mo} and of the $1$st stem by R\"ondigs, Spitzweck and \O stv\ae r in \cite{RSO}. On the other hand, the analysis of the motivic homotopy groups of the cofiber of $\tau$, the polynomial generator in bidegree $(1)[0]$ of the motivic cohomology of the point, over complex numbers by Isaksen in \cite{I} and by Gheorghe in \cite{G} have led to very interesting insights on both motivic homotopy theory and classical homotopy theory. In fact, these stable homotopy groups are identified as a ring with higher products with the $E_2$-page of the Adams-Novikov spectral sequence. This result is surprising since on the one hand it presents a fundamental object in classical homotopy theory, namely the $E_2$-page of the Adams-Novikov spectral sequence, as the motivic homotopy groups of a motivic $E_{\infty}$-ring spectrum and on the other shows that these homotopy groups are purely algebraic in nature. As we will see, we will achieve a similar result regarding the $E_2$-page of the classical Adams spectral sequence.

In order to study motivic categories and motivic problems over more general fields, one needs to find some substitutes of topological realization functors and find a way to simplify the original motivic categories without losing too much information. One possible solution is the one suggested by Vishik in his work on isotropic categories of motives (see \cite{V}). These isotropic versions of Voevodsky's triangulated category of motives (see \cite{V3}) are, at least for flexible fields, i.e. purely trascendental extensions of countable infinite degree, much simpler than the original one and permit to study and highlight some motivic phenomena by looking at them in an easier environment. More precisely, these isotropic categories are obtained by tensoring Voevodsky's categories with an idempotent that kills at once all motives of anisotropic varieties. Killing anisotropic varieties results into a simplification of the original category, but also in a deeper understanding of its structure. As an example, when computing the isotropic motivic cohomology of the point, Vishik obtains an exterior algebra in generators that are inverses of Milnor operations. This means that the Milnor subalgebra of the Steenrod algebra is encoded in the very fabric of isotropic categories.

In this paper, our purpose is to study an isotropic version of the stable motivic homotopy category, by following the path drawed by Vishik. After studying formal properties of this category and analysing cohomology operations in this isotropic environment we would like to move on to the study of isotropic stable homotopy groups of the sphere spectrum. In order to do so, we will need an isotropic version of the Adams spectral sequence, an analysis of its $E_2$-page and a study on its convergence. This will be achieved and the answer we get is exactly what one would expect, notably there exists an isotropic Adams spectral sequence with $E_2$-page provided by the cohomology of the isotropic Steenrod algebra and converging to something closely related to the isotropic motivic homotopy groups of the sphere spectrum, more precisely to the stable motivic homotopy groups of the $\hz$-nilpotent completion of the isotropic sphere spectrum $\X$, where $\hz$ is the Eilenberg-MacLane spectrum with $\Z/2$-coefficients and $\X$ is the unit of the isotropic stable motivic homotopy category (see Theorem \ref{lab}).

Once we get this important result we move on to the study of the structure of the isotropic $E_2$-page. Indeed, we get a more familiar description in terms of the classical Steenrod algebra. More precisely we prove that the $E_2$-page of the isotropic Adams spectral sequence is isomorphic on the slope $2$ line to the $E_2$-page of the topological Adams spectral sequence and trivial elsewhere (see Theorem \ref{e2}). This result implies not only the strong convergence of the spectral sequence due to the obvious vanishing regions but also forces all subsequent differentials to be trivial showing that the stable motivic homotopy groups of the isotropic sphere spectrum are exactly the $Ext$-groups of the topological Steenrod algebra (see Theorem \ref{main}).

Actually, more is true, since $\X$ happens to be an idempotent, and so a motivic $E_{\infty}$-ring spectrum. This allows us to identify at the end isotropic homotopy groups of spheres and the cohomology of the Steenrod algebra not only additively, as previously noted, but also as rings with all their higher products. This result tells us that the $Ext$ of the Steenrod algebra with its higher structure provided by Massey products is realised as motivic stable homotopy groups of a motivic $E_{\infty}$-ring spectrum with its higher structure provided by Toda brakets (see Corollary \ref{fin}). Moreover, we want to highlight that, as for the cofiber of $\tau$ over complex numbers, the motivic homotopy groups of $\X$ have a surprising purely algebraic nature.\\

\textbf{Outline.} We now briefly summarise the contents of each section of this paper. In Section $2$ we introduce isotropic categories, recall their construction and show some of their main features. Section $3$ is devoted to the analysis of cohomology operations in this isotropic environment. We define the isotropic Steenrod algebra and prove some results on its structure. Then, in Section $4$, we recall the construction of the motivic Adams spectral sequence and build an isotropic version of it. Moreover, we identify the isotropic $E_2$-page in terms of the cohomology of the isotropic Steenrod algebra and give results on convergence of the spectral sequence for the sphere spectrum. Section $5$ is devoted to the study of the isotropic $E_2$-page. In particular we simplify it till the point of getting an isomorphism with the cohomology of the topological Steenrod algebra on the slope $2$ line. This forces differentials to be trivial and allows the identification of stable motivic homotopy groups of the isotropic sphere spectrum with the $E_2$-page of the classical Adams spectral sequence. In Section $6$ we prove that this identification is not only additive but respects all higher products, i.e. sends Toda brackets to Massey products.\\

\textbf{Notation.} Here we fix some notations we will use throughout this paper.\\

\begin{tabular}{c|c}
	$k$ & flexible field, i.e. $k=k_0(t_1,t_2,\dots)$ for some field $k_0$, of characteristic different from 2\\
	$\SH(k)$ & stable motivic homotopy category over $k$\\
	$\SH(k/k)$ & isotropic stable motivic homotopy category over $k$\\
	$\DM(k)$ & triangulated category of motives with $\Z/2$-coefficients over $k$\\
	$\DM(k/k)$ & isotropic triangulated category of motives with $\Z/2$-coefficients over $k$\\
	$\pi_{**}(-)$ & stable motivic homotopy groups\\
	$\pi^{iso}_{**}(-)$ & isotropic stable motivic homotopy groups\\
	$H(-)$ & motivic cohomology with $\Z/2$-coefficients\\
	$H_{iso}(-)$ & isotropic motivic cohomology with $\Z/2$-coefficients\\
	$H(k)$ & motivic cohomology with $\Z/2$-coefficients of $Spec(k)$\\
	$H(k/k)$ & isotropic motivic cohomology with $\Z/2$-coefficients of $Spec(k)$\\
	$\A(k)$ & mod 2 motivic Steenrod algebra\\
	$\A(k/k)$ & mod 2 isotropic motivic Steenrod algebra\\
	$\A$ & mod 2 classical Steenrod algebra\\
	$\s$ & motivic sphere spectrum\\
	$\hz$ & motivic Eilenberg-MacLane spectrum with $\Z/2$-coefficients\\
	$\X$ & isotropic sphere spectrum\\
	$\one$ & unit of $\DM(k)$\\
\end{tabular}\\

Moreover, $\wedge$ denotes the smash product in $\SH(k)$ and $\otimes$ the tensor product in $\DM(k)$. We denote hom-sets in $\SH(k)$ by $[-,-]$ and the suspension $\s^{p,q} \wedge X$ of a motivic spectrum $X$ by $X^{p,q}$. For any motive $M$ in $\DM(k)$, we denote by $M(q)[p]$ the motive $M \otimes \one(q)[p]$. In general, we write bidegrees as $(q)[p]$, where $p$ is the topological degree and $q$ is the weight. \\

\textbf{Acknowledgements.} I would like to thank my PhD supervisor Alexander Vishik for proposing me the main topic of this paper and for many helpful conversations about it. Moreover, I would like to thank Jonas Irgens Kylling for very useful discussions and Paul Arne \O stv\ae r for having invited me in Oslo. I am also grateful to the referee for very helpful comments which allowed to improve the overall exposition and to simplify Sections 2 and 6.

\section{Isotropic motivic categories}

We would like to start, in this section, by introducing the main isotropic motivic categories, which constitute the environment where we will work in this paper. These tensor triangulated categories are obtained, roughly speaking, by tensoring the standard motivic categories with an idempotent that kills all anisotropic varieties. This operation on the one hand simplifies the original categories and on the other sheds light on some of their essential properties.

Two important triangulated categories in motivic homotopy theory are provided by the stable motivic homotopy category $\SH(k)$ (see \cite[Section 2.3]{DLORV}) and the triangulated category of motives with $\Z/2$ coefficients $\DM(k)$ (see \cite[Section 3.1]{V3}). The Dold-Kan correspondence extends to a pair of adjoint functors between $\SH(k)$ and $\DM(k)$
$$
\xymatrix{
	\SH(k) \ar@/_/[d]_{\Mot}\\
	\DM(k) \ar@/_/[u]_{\HM} 
}
$$
where $\Mot$ is the motivic functor and $\HM$ is the Eilenberg-MacLane functor.

The isotropic triangulated category of motives $\DM(k/k)$ has been constructed by Vishik in \cite{V}. A similar construction is possible for the isotropic stable motivic homotopy category $\SH(k/k)$. In a nutshell, the objects of $\SH(k/k)$ are the same of $\SH(k)$, while the morphisms are obtained by tensoring with a specific idempotent $\X$, i.e. 
$$Hom_{\SH(k/k)}(X,Y)=[\X \wedge X,\X \wedge Y]$$
where $\X$ is obtained as follows. First, let $Q$ be the disjoint union of all connected anisotropic varieties over $k$, namely varieties which do not have closed points of odd degree (it is a coproduct of infinitely many objects in the category of smooth schemes), and let $\check{C}(Q)$ be its \v{C}ech simplicial scheme, i.e. $\check{C}(Q)_n=Q^{n+1}$ and face and degeneracy maps are respectively given by partial projections and partial diagonals. The projection $\check{C}(Q) \rightarrow Spec(k)$ induces a distinguished triangle in $\SH(k)$:
$$\Sigma^{\infty}_+\check{C}(Q) \rightarrow \s \rightarrow \widetilde{\Sigma^{\infty}_+\check{C}(Q)} \rightarrow \Sigma^{\infty}_+\check{C}(Q)[1].$$
Then, the isotropic sphere spectrum $\X$ is defined as $\widetilde{\Sigma^{\infty}_+\check{C}(Q)}$. Moreover, let us denote the motivic spectrum $\Sigma^{\infty}_+\check{C}(Q)$ by $\X'$. The reason why one performs this construction resides in the fact that, for any anisotropic variety $P$, the map $\X' \wedge \Sigma^{\infty}_+P \rightarrow \Sigma^{\infty}_+P$ is an isomorphism (see \cite[Lemma 9.2]{V1}). This implies that $\X \wedge \Sigma^{\infty}_+P \cong 0$, which motivates the definition of the isotropic stable homotopy category. Note that $\X'$ is obviously an idempotent in $\SH(k)$, i.e. $\X' \wedge \X' \cong \X'$ (it follows from \cite[Section 3.1, Lemma 1.11]{MV}). Hence, we have that $\X' \wedge \X \cong 0$ and $\X$ is an idempotent as well.

We would like to point out that the reason why we work over a flexible base field is that, in this case, isotropic categories become particularly handy and informative. However, this choice is not restrictive at all, since as highlighted in \cite[Remark 1.2]{V} the restriction functors $\SH(k_0) \rightarrow \SH(k)$ and $\DM(k_0) \rightarrow \DM(k)$ are conservative, so no motivic information is lost in the passage. 

Let $L,C:\SH(k) \rightarrow \SH(k)$ be the exact functors defined by $L(X)=\X \wedge X$ and $C(X)=\X' \wedge X$ on objects and by $L(f)=id_{\X}\wedge f$ and $C(f)=id_{\X'} \wedge f$ on morphisms. First, we want to prove that $L$ is a localization functor and $C$ is a colocalization functor (see \cite[Definition 3.1.1]{HPS}).

\begin{prop}\label{loc}
$L$ is a localization functor and $C$ is a colocalization functor. Moreover, for any $X$ in $\SH(k)$ there is a distinguished triangle
$$C(X) \rightarrow X \rightarrow L(X) \rightarrow C(X)[1].$$
\end{prop}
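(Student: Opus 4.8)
The plan is to verify the defining conditions of \cite[Definition 3.1.1]{HPS} directly, using only the idempotency of $\X'$ and of $\X$ recorded above; one could alternatively deduce the statement from the general theory of idempotent triangles in a tensor triangulated category, but the hands-on route is shorter.

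First I would produce the triangle. Smashing the defining distinguished triangle $\X' \to \s \to \X \to \X'[1]$ with $X$ and using exactness of $-\wedge X$ on $\SH(k)$ gives a distinguished triangle
\[
C(X) \xrightarrow{\ \epsilon_X\ } X \xrightarrow{\ \eta_X\ } L(X) \to C(X)[1],
\]
where $\epsilon_X=\mu\wedge\mathrm{id}_X$ and $\eta_X=\nu\wedge\mathrm{id}_X$ for $\mu\colon\X'\to\s$ and $\nu\colon\s\to\X$ the structure maps. Since $\epsilon$ and $\eta$ are obtained by smashing fixed morphisms with identity maps, they are natural in $X$, and $L$, $C$ are manifestly exact. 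What remains is: (i) $\eta_{L(X)}$ and $\epsilon_{C(X)}$ are isomorphisms; and (ii) the coherence identities $L(\eta_X)=\eta_{L(X)}$ and $C(\epsilon_X)=\epsilon_{C(X)}$.

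Part (i) is immediate from the vanishing $\X\wedge\X'\cong 0$: smashing the defining triangle with $\X'\wedge X$ shows that the third term of the triangle for $C(X)$, namely $L(C(X))=\X\wedge\X'\wedge X$, vanishes, so $\epsilon_{C(X)}\colon C^2(X)\to C(X)$ is an isomorphism; dually $C(L(X))=\X'\wedge\X\wedge X$ vanishes, so $\eta_{L(X)}\colon L(X)\to L^2(X)$ is an isomorphism.

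Part (ii) is the only place where genuine input is required, and I expect it to be the main obstacle. It amounts to the identities $\mathrm{id}_{\X'}\wedge\mu=\mu\wedge\mathrm{id}_{\X'}\colon \X'\wedge\X'\to\X'$ and $\mathrm{id}_{\X}\wedge\nu=\nu\wedge\mathrm{id}_{\X}\colon \X\to\X\wedge\X$. For the first, under $\X'\wedge\X'=\Sigma^{\infty}(\check{C}(Q)\times\check{C}(Q))_+$ the two maps are $\Sigma^{\infty}$ of the two projections $\check{C}(Q)\times\check{C}(Q)\to\check{C}(Q)$; both are isomorphisms in $\SH(k)$ (this is precisely the idempotency $\X'\wedge\X'\cong\X'$, \cite[Section 3.1, Lemma 1.11]{MV}), and both are split by $\Sigma^{\infty}$ of the diagonal, so each equals the inverse of that common section and hence they coincide. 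The second identity follows in the same spirit: both $\mathrm{id}_{\X}\wedge\nu$ and $\nu\wedge\mathrm{id}_{\X}$ are isomorphisms by Part (i), and the idempotent $\X$ carries a (unique) multiplication $\X\wedge\X\to\X$ with unit $\nu$ which is a common left inverse of the two, forcing them to agree; alternatively one transports the first identity along the $3\times 3$ diagram obtained by smashing the defining triangle with itself. With (i) and (ii) established, $(L,\eta)$ is a localization functor and $(C,\epsilon)$ a colocalization functor, and the triangle displayed above is the asserted one.
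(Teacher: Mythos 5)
Your proof takes a genuinely different route from the paper's. The paper does not check HPS Definition 3.1.1 directly; instead it verifies the hypotheses of HPS Lemma 3.1.6(a) — namely that $Li:L\to L^2$ is an equivalence, that $[\X'\wedge X,\X\wedge Y]\cong 0$ (hence $[X,LY]\cong[LX,LY]$ from the defining triangle), and the smashing condition $LX\cong 0\Rightarrow L(X\wedge Y)\cong 0$ — and then lets the lemma supply the localization structure. This side-steps the coherence identity $L\eta=\eta L$ entirely, which is precisely the point you (rightly) single out as ``the only place where genuine input is required.'' In other words, the paper's route is the one that avoids your main obstacle, and despite your opening remark the hands-on route is in fact the longer of the two.

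That said, your verification can be made to work, but the step for $\X$ needs tightening. Appealing to the $E_\infty$-ring structure on $\X$ forward-references Proposition \ref{rob}, which appears at the very end of the paper; it is not logically circular (Lurie's HA 4.8.2.9 depends only on the idempotency you already have), but it inverts the exposition. The ``transport along the $3\times 3$ diagram'' alternative is not actually spelled out, and the $3\times 3$ lemma by itself only gives $(\mathrm{id}_\X\wedge\nu)\circ\nu=\nu\wedge\nu=(\nu\wedge\mathrm{id}_\X)\circ\nu$, which is weaker than the equality you want. The clean self-contained argument is: since the two maps $\X\to\X\wedge\X$ agree after precomposition with $\nu:\s\to\X$, their difference factors through the connecting map $\X\to\X'[1]$; but $[\X'[1],\X\wedge\X]\cong 0$ (this is exactly the vanishing $[\X'\wedge X,\X\wedge Y]\cong 0$ that the paper proves en route), so the two maps coincide. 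With that repair, and the analogous check for $C$, your direct verification is correct.
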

\begin{proof}
First, note that for any $X$ 
$$C(X) \rightarrow X \rightarrow L(X) \rightarrow C(X)[1]$$
is a distinguished triangle by the very definition of $L$ and $C$.

By applying $L$ to the morphism $X \rightarrow \X \wedge X$ one obtains the isomorphism $\X \wedge X \rightarrow \X \wedge \X \wedge X$. This implies that the natural transformation $Li$, induced by the obvious natural transformation $i:1 \rightarrow L$, is an equivalence between $L$ and $L^2$.

Now, consider a morphism $f:\X' \wedge X \rightarrow \X \wedge Y$. By applying $L$ we get the square
$$
\xymatrix{
	\X' \wedge X \ar@{->}[r]^{f} \ar@{->}[d]&  \X \wedge Y \ar@{->}[d]^{\cong} \\
	\X \wedge \X' \wedge X \cong 0 \ar@{->}[r] & \X \wedge \X \wedge Y.
}
$$
Therefore, $f$ is the zero morphism and $[\X' \wedge X,\X \wedge Y]\cong 0$. By applying the cohomological functor $[-,\X \wedge Y]$ to the distinguished triangle
$$\X' \wedge X \rightarrow X \rightarrow \X \wedge X \rightarrow \X' \wedge X[1],$$
one obtains an isomorphism $[X,L(Y)] \cong [L(X),L(Y)]$.

Moreover, if $L(X) \cong 0$, then $L(X \wedge Y) \cong L(X) \wedge Y \cong 0$ for any $Y$. Hence, $L$ is a localization functor and $C$ is a colocalization functor by \cite[Lemma 3.1.6(a)]{HPS}, which completes the proof.
  
\end{proof}

At this point, we can consider two natural full subcategories of $\SH(k)$, namely $\T'=\X' \wedge \SH(k)$ and $\T=\X \wedge \SH(k)$. The second one is, by definition, nothing else but $\SH(k/k)$. First, we report the following result that summarizes the main properties of $\T'$ and $\T$.

\begin{prop}\label{tfae}
	The following are equivalent:\\
	1) $[X,Y] \cong 0$ for any $X$ in $\T'$;\\
	2) the morphism $Y \rightarrow \X \wedge Y$ induced by $\s \rightarrow \X$ is an isomorphism;\\
	3) $\X' \wedge Y \cong 0$;\\
	4) $Y \in \T$.
	
	Dually, the following are equivalent:\\
	1) $[X,Y] \cong 0$ for any $Y$ in $\T$;\\
	2) the morphism $\X' \wedge X \rightarrow X$ induced by $\X' \rightarrow \s$ is an isomorphism;\\
	3) $\X \wedge X \cong 0$;\\
	4) $X \in \T'$.
	
Moreover, $\T'$ is a localizing subcategory and $\T$ is a colocalizing subcategory. 
\end{prop}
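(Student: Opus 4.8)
The plan is to prove the first chain of equivalences by running the cycle $(1)\Rightarrow(3)\Rightarrow(2)\Rightarrow(4)\Rightarrow(1)$, then to obtain the second chain by the same four implications with the roles of $\X$ and $\X'$ interchanged, and finally to read off that $\T'$ is localizing and $\T$ colocalizing from the orthogonality descriptions $(1)\Leftrightarrow(4)$ and $(1')\Leftrightarrow(4')$. The whole argument is formal once one grants the two ingredients supplied by Proposition \ref{loc}: the distinguished triangle $\X'\wedge Y\to Y\to\X\wedge Y\to(\X'\wedge Y)[1]$, and the vanishing $[\X'\wedge A,\X\wedge B]\cong 0$ established within its proof.

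For $(3)\Rightarrow(2)$, if $\X'\wedge Y\cong 0$ then the distinguished triangle forces $Y\to\X\wedge Y$ to be an isomorphism. For $(2)\Rightarrow(4)$, $Y\cong\X\wedge Y\in\X\wedge\SH(k)=\T$. For $(4)\Rightarrow(1)$, writing $Y\cong\X\wedge Z$ and $X\cong\X'\wedge W$ with $X\in\T'$, one gets $[X,Y]\cong[\X'\wedge W,\X\wedge Z]\cong 0$ by the vanishing recalled above. For $(1)\Rightarrow(3)$, apply the homological functor $[\X'\wedge Y,-]$ to the rotated triangle $\X\wedge Y[-1]\to\X'\wedge Y\to Y$: the two outer groups $[\X'\wedge Y,\X\wedge Y[-1]]$ and $[\X'\wedge Y,Y]$ vanish, the former by the same vanishing and the latter because $\X'\wedge Y\in\T'$ and $(1)$ holds; hence $[\X'\wedge Y,\X'\wedge Y]\cong 0$, so $\mathrm{id}_{\X'\wedge Y}=0$ and $\X'\wedge Y\cong 0$.

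The dual chain $(3')\Rightarrow(2')\Rightarrow(4')\Rightarrow(1')\Rightarrow(3')$ goes through verbatim, now reading the triangle as $\X'\wedge X\to X\to\X\wedge X\to(\X'\wedge X)[1]$, using $\X\wedge\X'\cong 0$, and in $(1')\Rightarrow(3')$ applying $[-,\X\wedge X]$ to that triangle. The only external input is, in $(4')\Rightarrow(1')$, the already proven implication $(4)\Rightarrow(2)$, which lets one replace $Y\in\T$ by $Y\cong\X\wedge Y$ before invoking the vanishing. It is worth noting that this ``dual'' chain is not produced by a formal self-duality of $\SH(k)$ (there is none), but by literally repeating the arguments; the computation $[\X'\wedge A,\X\wedge B]\cong 0$ is symmetric enough to serve in both.

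For the last assertion, $(1)\Leftrightarrow(4)$ identifies $\T$ with the right orthogonal $\{Y:[X,Y]\cong 0\text{ for all }X\in\T'\}$, which is automatically closed under arbitrary products, retracts, shifts and cofibres, hence colocalizing in the sense of \cite[Definition 3.1.1]{HPS}; dually $(1')\Leftrightarrow(4')$ exhibits $\T'$ as the left orthogonal of $\T$, hence localizing. Alternatively one may simply quote \cite[Lemma 3.1.6]{HPS}, since Proposition \ref{loc} has already identified $L$ as a localization functor with associated colocalization $C$. I do not expect any genuine obstacle here: the single piece of real content, the vanishing $[\X'\wedge A,\X\wedge B]\cong 0$, was dispatched in Proposition \ref{loc}, and the only thing demanding attention is the bookkeeping of the directions of the connecting maps and of the unit and counit of $L$ and $C$.
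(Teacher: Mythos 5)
Your proof is correct, and it takes a different route from the paper: the paper's entire proof is a citation to \cite[Lemma 3.1.6]{HPS}, which applies because Proposition \ref{loc} has already established that $L$ and $C$ are a localization/colocalization pair. What you do instead is unpack the content of that lemma in the concrete case at hand, running a direct cycle of implications from the two ingredients isolated in Proposition \ref{loc} (the triangle $C(X)\to X\to L(X)$ and the vanishing $[\X'\wedge A,\X\wedge B]\cong 0$). The cycles $(3)\Rightarrow(2)\Rightarrow(4)\Rightarrow(1)\Rightarrow(3)$ and its primed analogue are both clean; the use of the rotated triangle and the ``identity is zero'' trick in $(1)\Rightarrow(3)$ and $(1')\Rightarrow(3')$ is exactly the right move, and your remark that the primed chain is obtained by redoing the argument rather than by a formal duality of $\SH(k)$ is a fair and relevant caveat. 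The closing observation that $\T$ is a right orthogonal (hence closed under products, retracts, shifts, extensions, so colocalizing) and $\T'$ a left orthogonal (hence closed under coproducts, so localizing) is also what the cited lemma would give. The trade-off is the usual one: the paper's citation is shorter and places the result in the general framework of axiomatic stable homotopy theory, whereas your version is self-contained and makes explicit where the smashing idempotent structure of $\X,\X'$ enters. You even flag the citation route yourself, so nothing is missing.
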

\begin{proof}
See \cite[Lemma 3.1.6]{HPS}. 
\end{proof}

Note that, since $L$ is a smashing localization functor (see \cite[Definition 3.3.2]{HPS}), $\T$ is also a localizing subcategory. In the next result we prove that $\T$ is also closed under sequential homotopy limits.

\begin{prop}\label{hl}
	Consider an inverse system of objects in $\SH(k)$
	$$\dots \rightarrow X_i \rightarrow  \dots \rightarrow X_2 \rightarrow X_1 \rightarrow X_0.$$ 
	If $X_i$ belongs to $\T$ for all $i$, then $\hl X_i$ belongs to $\T$.
\end{prop}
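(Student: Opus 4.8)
The plan is to characterize membership in $\T$ via condition (3) of Proposition \ref{tfae}, namely that $Y \in \T$ if and only if $\X' \wedge Y \cong 0$, and then to commute $\X' \wedge (-)$ past the homotopy limit. Concretely, I would first recall that $\hl X_i$ fits into the Milnor exact triangle
$$
\hl X_i \rightarrow \prod_i X_i \xrightarrow{1-\text{shift}} \prod_i X_i \rightarrow (\hl X_i)[1].
$$
Smashing a fixed spectrum against a triangle yields a triangle, so applying $\X' \wedge (-)$ gives
$$
\X' \wedge \hl X_i \rightarrow \X' \wedge \prod_i X_i \rightarrow \X' \wedge \prod_i X_i \rightarrow (\X' \wedge \hl X_i)[1].
$$
Thus it suffices to show $\X' \wedge \prod_i X_i \cong 0$, because then the two middle terms vanish and the triangle forces $\X' \wedge \hl X_i \cong 0$, whence $\hl X_i \in \T$ by Proposition \ref{tfae}.

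The remaining point is therefore to show that $\X' \wedge (-)$ commutes with products, or at least that it kills $\prod_i X_i$ when it kills each $X_i$. Here I would use that $\X' = \Sigma^\infty \check{C}(Q)_+$ is a suspension spectrum of a \emph{simplicial smooth scheme}: each $\check{C}(Q)_n = Q^{n+1}$ with $Q$ a (typically infinite) coproduct of smooth schemes. One shows $\X'$ is a filtered homotopy colimit of \emph{compact} (i.e. strongly dualizable, or at least dualizable) objects — finite coproducts of smashes of suspension spectra of smooth schemes, truncated along the simplicial direction — and compact objects commute with products when smashed in. More precisely, $\X' \wedge (-)$ commutes with products because $\X'$ is dualizable in the relevant sense: for a dualizable $D$ with dual $D^\vee$ one has $D \wedge \prod_i X_i \simeq \prod_i (D \wedge X_i)$, as both represent $[D^\vee \wedge (-), \prod_i X_i] = \prod_i [D^\vee \wedge (-), X_i]$. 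If $\X'$ itself is not literally dualizable, I would instead argue that $[\X' \wedge \prod_i X_i, Z] \cong [\prod_i X_i, F(\X', Z)]$ and reduce to showing $F(\X', Z)$ behaves well, or more directly decompose the argument over the compact generators of $\SH(k)$: since $\SH(k)$ is compactly generated by suspension spectra of smooth schemes $\Sigma^\infty U_+$, and each such $\Sigma^\infty U_+$ is compact, we have $[\Sigma^\infty U_+, \X' \wedge \prod X_i]$; commuting $\X' \wedge (-)$ past $\prod$ reduces this to $\prod_i [\Sigma^\infty U_+, \X' \wedge X_i] = \prod_i 0 = 0$, and since these generate, $\X' \wedge \prod X_i \cong 0$.

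The main obstacle I anticipate is precisely this interchange of $\X' \wedge (-)$ with infinite products: smashing with an \emph{arbitrary} object does not commute with products, so one genuinely needs a finiteness/dualizability property of $\X'$. The cleanest route is to exploit that $\X'$ is built from smooth schemes and their \v{C}ech construction, hence lies in the localizing subcategory generated by dualizable objects in a way that makes $C = \X' \wedge (-)$ preserve products; alternatively one can invoke that $C$ is a colocalization functor (Proposition \ref{loc}) whose associated colocalizing subcategory $\T'$ is generated by the compact objects $\Sigma^\infty P_+$ for $P$ anisotropic, and colocalization functors onto compactly-generated colocalizing subcategories preserve products. I would adopt whichever of these is most economical given the conventions already fixed, most likely the compact-generator argument, and record the product interchange as the one lemma requiring care; the rest of the proof is the formal triangle-chase above.
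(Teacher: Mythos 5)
Your reduction to the Milnor triangle is exactly the paper's first step, and you correctly identify that everything comes down to showing $\prod_i X_i$ belongs to $\T$. But the route you favor for that reduction has a real gap, and the paper's own argument is substantially simpler than anything you propose.

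The problem is your treatment of $\X' \wedge \prod_i X_i$. First, $\X'=\Sigma^\infty\check{C}(Q)_+$ is \emph{not} dualizable: $Q$ is an infinite coproduct of smooth schemes, so already $\Sigma^\infty Q_+$ (let alone the geometric realization of $\check{C}(Q)$) fails to be compact or dualizable, and your ``$\X'$ is dualizable in the relevant sense'' clause is simply false. Second, the compact-generator argument you say you would ``most likely adopt'' is circular as written: to pass from $[\Sigma^\infty U_+,\X'\wedge\prod_i X_i]$ to $\prod_i[\Sigma^\infty U_+,\X'\wedge X_i]$ you are ``commuting $\X'\wedge(-)$ past $\prod$,'' which is precisely the interchange under dispute; compactness of $\Sigma^\infty U_+$ lets maps out of it commute with \emph{coproducts}, not with a smash sitting against a product. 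Third, being a filtered homotopy colimit of dualizable objects does not make smashing with the colimit preserve products, since the colimit itself does not commute with products.

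What actually closes the gap, and what the paper uses, is a much cheaper fact you mention only as a last-resort aside: Proposition \ref{tfae} already records that $\T$ is a colocalizing subcategory (via \cite[Lemma 3.1.6]{HPS}), and ``colocalizing'' means, among other things, closed under arbitrary products. Equivalently, $\T$ is the right orthogonal $\T'^\perp$, and right orthogonals are automatically closed under products since $[X,\prod_i Y_i]\cong\prod_i[X,Y_i]$. With that in hand, $\prod_i X_i\in\T$ and the Milnor triangle forces $\hl X_i\in\T$ by thickness; there is no need to involve $\X'$ or any dualizability considerations at all. You should drop the dualizability and compact-generator routes, and promote the colocalizing-subcategory observation from an afterthought to the actual argument.
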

\begin{proof}
	By definition of homotopy limit of an inverse system in triangulated categories we have a distinguished triangle
	$$\hl X_i \rightarrow \prod_i X_i \xrightarrow{id-sh} \prod_i X_i \rightarrow \hl X_i[1]$$
	in $\SH(k)$. Since $\T$ is closed under arbitrary products, we deduce that also $\hl X_i$ is in $\T$. 
	\end{proof}

Since essentially everything we have said comes from the fact that the idempotents $\X'$ and $\X$ induce a semi-orthogonal decomposition of $\SH(k)$, the same results and arguments hold for $\DM(k/k)$, whose objects are the same of $\DM(k)$ and morphisms are given by
$$Hom_{\DM(k/k)}(A,B)=Hom_{\DM(k)}(\Mot(\X) \otimes A,\Mot(\X) \otimes B).$$
More precisely, the respective motivic subcategories of $\DM(k)$ are $\Mot(\X') \otimes \DM(k)$ and $\Mot(\X) \otimes \DM(k)$. The second one is identified with the isotropic category of motives $\DM(k/k)$ and all results from Proposition \ref{loc} to Proposition \ref{hl} apply to $\DM(k/k)$ verbatim.

Note that there are obvious global-to-local functors $\SH(k) \rightarrow \SH(k/k)$ and $\DM(k) \rightarrow \DM(k/k)$ from motivic categories to their isotropic counterparts.

\begin{prop}
	The pair of adjoint functors $(\Mot,\HM)$ between $\SH(k)$ and $\DM(k)$ restricts to a pair of adjoint functors between $\SH(k/k)$ and $\DM(k/k)$
$$
\xymatrix{
	\SH(k/k) \ar@/_/[d]_{\Mot}\\
	\DM(k/k). \ar@/_/[u]_{\HM}
}
$$	
\end{prop}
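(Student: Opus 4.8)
The plan is to work with the identifications $\SH(k/k)=\T$ and $\DM(k/k)=\T^{\DM}$ recorded above, where $\T=\X\wedge\SH(k)$ and $\T^{\DM}=\Mot(\X)\otimes\DM(k)$. Under these, the claim splits into three pieces: (i) $\Mot$ carries $\T$ into $\T^{\DM}$; (ii) $\HM$ carries $\T^{\DM}$ into $\T$; (iii) the two restricted functors form an adjoint pair. Granting (i) and (ii), piece (iii) requires no further work, because $\T$ and $\T^{\DM}$ are \emph{full} subcategories: for $X\in\T$ and $A\in\T^{\DM}$ one then has $\Mot(X)\in\T^{\DM}$ and $\HM(A)\in\T$, so the adjunction isomorphism $[\Mot(X),A]\cong[X,\HM(A)]$ of $\DM(k)$ and $\SH(k)$ is literally an isomorphism $Hom_{\DM(k/k)}(\Mot(X),A)\cong Hom_{\SH(k/k)}(X,\HM(A))$, natural in both variables, and the unit and counit restrict accordingly. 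So the whole content lies in (i) and (ii).

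Both of these I would deduce from the fact that the motivic functor $\Mot$ is exact and strong symmetric monoidal with $\Mot(\s)\cong\one$; in particular there are natural isomorphisms $\Mot(\X\wedge(-))\cong\Mot(\X)\otimes\Mot(-)$ and $\Mot(\X'\wedge(-))\cong\Mot(\X')\otimes\Mot(-)$. For (i): if $Y\in\T$ then $\X'\wedge Y\cong 0$ by Proposition \ref{tfae}, hence $\Mot(\X')\otimes\Mot(Y)\cong\Mot(\X'\wedge Y)\cong 0$, so that $\Mot(Y)\in\T^{\DM}$ by the $\DM(k)$-analogue of Proposition \ref{tfae}. For (ii): the same monoidality shows that $\Mot$ also maps $\T'$ into $\Mot(\X')\otimes\DM(k)$, since $X\in\T'$ gives $X\cong\X'\wedge X$ and thus $\Mot(X)\cong\Mot(\X')\otimes\Mot(X)\in\Mot(\X')\otimes\DM(k)$. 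Now take $A\in\T^{\DM}$; for every $X\in\T'$ the adjunction gives $[X,\HM(A)]\cong[\Mot(X),A]$, and this vanishes because $\Mot(X)\in\Mot(\X')\otimes\DM(k)$ and $A\in\T^{\DM}$, again by the $\DM(k)$-analogue of Proposition \ref{tfae}. Since $[X,\HM(A)]\cong 0$ for all $X\in\T'$, Proposition \ref{tfae} yields $\HM(A)\in\T$. (One can also bypass the orthogonality step via the projection formula $\HM(\Mot(\X')\otimes A)\cong\X'\wedge\HM(A)$: for $A\in\T^{\DM}$ the left side equals $\HM(0)=0$, so $\HM(A)\in\T$ at once.)

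I expect no serious obstacle here; the only genuine external ingredient, and the one point to be careful about, is the monoidality of the pair $(\Mot,\HM)$ — concretely the isomorphism $\Mot(\X\wedge(-))\cong\Mot(\X)\otimes\Mot(-)$, and, along the shortcut, the projection formula — since this is precisely what guarantees that applying $\Mot$ and $\HM$ to morphisms of the isotropic categories lands in the correct hom-groups in the first place. Everything else is formal, being a consequence of the semi-orthogonal decompositions induced by the complementary idempotents $\X',\X$ in $\SH(k)$ and $\Mot(\X'),\Mot(\X)$ in $\DM(k)$, together with the characterizations of $\T$ and $\T^{\DM}$ in Proposition \ref{tfae} and its $\DM(k)$-analogue.
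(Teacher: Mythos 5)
Your proof is correct and relies on the same two external ingredients that the paper uses — the K\"unneth (monoidality) isomorphism $\Mot(\X\wedge X)\cong\Mot(\X)\otimes\Mot(X)$ and the projection formula $\HM(\Mot(\X)\otimes B)\cong\X\wedge\HM(B)$ — but the packaging differs. The paper's proof is a single five-step chain of natural isomorphisms (definition of $\DM(k/k)$, K\"unneth, the original $\SH(k)$/$\DM(k)$ adjunction, projection formula, definition of $\SH(k/k)$), never leaving the Hom-set picture. You instead pass to the equivalent full-subcategory description $\T=\X\wedge\SH(k)$ and $\Mot(\X)\otimes\DM(k)$, verify that $\Mot$ and $\HM$ preserve these subcategories, and observe that the adjunction then restricts for free by fullness; this makes the abstract structure more transparent at the cost of splitting one computation into three steps. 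The one genuinely distinct piece is your first argument that $\HM$ lands in $\T$: you derive it from the semi-orthogonal decompositions and the $\DM(k)$-analogue of Proposition \ref{tfae}, together with the observation that $\Mot$ carries $\T'$ into $\Mot(\X')\otimes\DM(k)$, thereby bypassing the projection formula entirely. That observation is worthwhile — it shows the proposition really only needs monoidality of $\Mot$, the adjunction, and the two semi-orthogonal decompositions — whereas the paper's projection-formula route is the shorter of the two.
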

\begin{proof}
The adjunction follows from the following chain of functorial equivalences:
\begin{align*}
Hom_{\DM(k/k)}(\Mot(X),B)& = Hom_{\DM(k)}(\Mot(\X) \otimes \Mot(X),\Mot(\X) \otimes B) &by \: definition\\
& \cong Hom_{\DM(k)}(\Mot(\X \wedge X),\Mot(\X) \otimes B) &by \:  K\ddot{u}nneth \: formula\\
& \cong Hom_{\SH(k)}(\X \wedge X,\HM(\Mot(\X) \otimes B)) &by \: adjunction\\
& \cong Hom_{\SH(k)}(\X \wedge X,\X \wedge \HM(B)) &by \: projection \: formula\\
& = Hom_{\SH(k/k)}(X,\HM(B)) &by \: definition
\end{align*} 
and the proof is complete. 
\end{proof}

The motivic cohomology and isotropic motivic cohomology of the point were computed respectively by Voevodsky in \cite{V1} over any field and by Vishik in \cite{V} over flexible fields. We report their results in the following two theorems.

\begin{thm}\label{voe}
	Let $k$ be a field of characteristic different from $2$. Then, the motivic cohomology of $Spec(k)$ with $\Z/2$-coefficients is described by
	$$H(k) \cong K^M(k)/2[\tau]$$
	where $\tau$ is the generator of $H^{0,1}(k) \cong \Z/2$ and $H^{n,n}(k) \cong K_n^M(k)/2$ is the Milnor K-theory of $k$ mod $2$.
\end{thm}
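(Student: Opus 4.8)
This is Voevodsky's computation (see \cite{V1}), and the plan is to assemble it from three inputs: the cycle-complex description of motivic cohomology, the Nesterenko--Suslin--Totaro identification of the diagonal, and the norm residue / Beilinson--Lichtenbaum theorems. The target ring $K^M(k)/2[\tau]$, with $K^M_n(k)/2$ placed in bidegree $(n)[n]$ and $\tau$ in bidegree $(1)[0]$, has underlying bigraded group $K^M_p(k)/2 \cdot \tau^{q-p}$ in bidegree $(q)[p]$ for $0 \le p \le q$ and $0$ elsewhere. So it suffices to (i) prove $H^{p,q}(k) = 0$ outside the range $0 \le p \le q$, (ii) produce isomorphisms $H^{p,q}(k) \cong K^M_p(k)/2$ for $0 \le p \le q$, and (iii) check that these are compatible with the ring structures, i.e. realised by multiplication by powers of $\tau$ on top of the Milnor $K$-theory product.

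For (i), I would use the description of integral motivic cohomology of a field through Bloch's higher Chow groups, $H^{p,q}(k,\Z) \cong CH^q(Spec(k), 2q-p)$, computed by the cycle complex $z^q(Spec(k),\bullet)$. A codimension-$q$ cycle on $\Delta^n_k$ exists only for $n \ge q$, so this complex is concentrated in degrees $\ge q$; hence $H^{p,q}(k,\Z) = 0$ for $p > q$. The Bockstein long exact sequence attached to $\Z \xrightarrow{2} \Z \to \Z/2$ then propagates this to $H^{p,q}(k) = 0$ for $p > q$, and in the boundary degree it yields $H^{n,n}(k) \cong H^{n,n}(k,\Z)/2$. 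Vanishing for $q < 0$ is the convention $\Z/2(q) = 0$, and vanishing for $p < 0$ (with $q \ge 0$) follows from the Beilinson--Lichtenbaum comparison of (ii) together with the vanishing of Galois cohomology in negative degrees.

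For (ii), the diagonal is handled by the Nesterenko--Suslin--Totaro theorem $H^{n,n}(k,\Z) \cong K^M_n(k)$, whence $H^{n,n}(k) \cong K^M_n(k)/2$. For $0 \le p < q$ the decisive input is the Beilinson--Lichtenbaum conjecture --- a theorem of Voevodsky, following from his proof of the Bloch--Kato (Milnor) conjecture at the prime $2$ --- which makes the comparison map $H^{p,q}(k) \to H^p_{et}(k,\mu_2^{\otimes q})$ an isomorphism for $p \le q$. Since $\mathrm{char}\,k \ne 2$, the étale sheaf $\mu_2$ is the constant sheaf $\Z/2$, so $H^p_{et}(k,\mu_2^{\otimes q}) \cong H^p(\mathrm{Gal}(k^{sep}/k),\Z/2)$ independently of $q$, and the norm residue isomorphism $K^M_p(k)/2 \xrightarrow{\sim} H^p_{et}(k,\mu_2^{\otimes p})$ identifies it with $K^M_p(k)/2$. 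For (iii): $\tau \in H^{0,1}(k) \cong \mu_2(k)$ is the class of $-1$, nontrivial precisely because $\mathrm{char}\,k \ne 2$, and under the comparison maps cup product with $\tau$ corresponds to the isomorphism $H^p_{et}(k,\mu_2^{\otimes q}) \to H^p_{et}(k,\mu_2^{\otimes q+1})$ induced by tensoring the coefficients with $\tau$; iterating, and using multiplicativity of the comparison maps together with the fact that the diagonal product is the Milnor $K$-theory product, upgrades the bigraded isomorphism of (ii) to the ring isomorphism $H(k) \cong K^M(k)/2[\tau]$.

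The only genuinely hard ingredient is the one not reproved here: Voevodsky's solution of the Bloch--Kato conjecture at $2$ and its Beilinson--Lichtenbaum consequence, which controls $H^{p,q}(k)$ in the range $p \le q$; the diagonal identification and the vanishing for $p > q$ are comparatively elementary structural facts, and the $\tau$-multiplication is a formal check on the étale side.
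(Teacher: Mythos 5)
Your proposal is correct and matches the content of the cited result: the paper simply refers to Voevodsky's \emph{Motivic cohomology with $\Z/2$-coefficients} for this computation, and your reconstruction assembles it from exactly the ingredients Voevodsky's argument rests on (cycle-complex vanishing for $p>q$, the Nesterenko--Suslin--Totaro identification on the diagonal, the Beilinson--Lichtenbaum/norm-residue comparison in the range $p\le q$, and $\tau$-multiplication giving the polynomial ring structure). There is no substantive difference in approach.
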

\begin{proof}
	See \cite[Theorem 6.1, Corollary 6.9 and Corollary 7.5]{V1}. 
\end{proof}

\begin{thm}\label{vis}
	Let $k$ be a flexible field. Then, for any $i \geq 0$ there exists a unique cohomology class $r_i$ of bidegree $(-2^i+1)[-2^{i+1}+1]$ such that
	$$H(k/k) \cong \Lambda_{\F_2}(r_i)_{i \geq 0}$$
	and $Q_jr_i=\delta_{ij}$, where $Q_j$ are the Milnor operations.
\end{thm}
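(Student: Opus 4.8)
The plan is to compute $H(k/k)$, which by Proposition \ref{loc} may be identified with the stable motivic homotopy groups of $\X\wedge\hz$, by means of the localization triangle of the same proposition. Applying $\pi_{**}(-)$ to $\X'\wedge\hz\to\hz\to\X\wedge\hz\to\X'\wedge\hz[1]$ produces a long exact sequence relating $H(k/k)$, the motivic cohomology $H(k)=K^M(k)/2[\tau]$ of the point (known by Theorem \ref{voe}), and the mod $2$ motivic homology of the \v{C}ech simplicial scheme $\check{C}(Q)$. Since everything in sight is an $\F_2$-vector space, the sequence splits in each bidegree, so that $H(k/k)$ is, bidegree by bidegree, an extension of a subgroup by a quotient of the other two groups; the whole computation is thereby reduced to understanding $\Sigma^\infty\check{C}(Q)_+\wedge\hz$.

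Next I would cut $Q$, the disjoint union of all anisotropic varieties, down to a cofinal family. Over a flexible field the $(i+1)$-fold Pfister form $\langle\langle t_1,\dots,t_{i+1}\rangle\rangle$ is anisotropic for every $i\geq 0$; let $\mathbb{Q}_i$ be the quadric of its pure part and $\mathcal{R}_i$ the associated Rost motive. Two things must be checked here. First, the positive-weight part of $H(k/k)$ vanishes: every nonzero symbol in $K^M_{\geq 1}(k)/2$ is split by some anisotropic Pfister quadric and hence dies in the isotropic category, and $\tau$ dies as well (this last fact needs its own short argument). Second, after smashing with $\hz$ the scheme $\check{C}(Q)$ is assembled --- through a sequential inverse system each of whose stages involves only finitely many of the $\mathbb{Q}_i$, so that Proposition \ref{hl} applies --- out of the \v{C}ech objects of the $\mathbb{Q}_i$ and their finite products alone; in other words, no exotic anisotropic variety contributes anything new. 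Establishing this second reduction is, I expect, the main obstacle, and it is exactly where flexibility of the base field is used in an essential way.

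Granting the reduction, each $\mathcal{R}_i$ contributes precisely one new generator. The mod $2$ motivic cohomology of $\check{C}(\mathbb{Q}_i)$ carries a two-step filtration whose nontrivial graded piece is a twist of $\mathcal{R}_i$, and $H^{**}(\mathcal{R}_i;\F_2)$ is free over $H(k)$ on two generators, the second obtained from the first by applying the Milnor operation $Q_i$ and multiplying by the defining symbol. Killing that symbol in the isotropic category collapses this to a single class $r_i$ living in the bidegree opposite to that of $Q_i$, namely $(-2^i+1)[-2^{i+1}+1]$, normalised so that $Q_ir_i=1$; this last normalisation --- a genuine duality statement --- is the very content of the Rost-motive input and may be checked directly, for instance via Vishik's symmetric operations. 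Feeding these classes back through the long exact sequence of the first paragraph yields the additive statement that $H(k/k)$ is one-dimensional over $\F_2$ in precisely the bidegrees occupied by the squarefree monomials in formal variables $r_i$ of the indicated degrees, and zero elsewhere. Note that such a monomial $r_{a_1}\cdots r_{a_s}$ with $a_1<\dots<a_s$ sits in bidegree $(q)[p]$ with $s=2q-p$ and $\sum_l 2^{a_l}=q-p$, so by uniqueness of binary expansions it is determined by its bidegree; in particular distinct monomials occupy distinct bidegrees, and $r_i^2$ --- which would lie in a bidegree not occupied by any such monomial --- must vanish.

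It remains to pin down the ring structure and the action of the Milnor operations. For $j\neq i$ the class $Q_jr_i$ lies in bidegree $(2^j-2^i)[2^{j+1}-2^{i+1}]$, which sits on the line $p=2q$; but the only nonzero class of $H(k/k)$ on that line is the unit $1\in H^{0,0}(k/k)$, since a product of $s\geq 1$ distinct generators lies on the line $p=2q-s$. Hence $Q_jr_i=0$ for $j\neq i$, while $Q_ir_i=1$ by the normalisation, so that $Q_jr_i=\delta_{ij}$. Since the $Q_j$ are derivations, applying the composite $Q_{i_1}\circ\cdots\circ Q_{i_s}$ to $r_{i_1}\cdots r_{i_s}$ (for $i_1<\dots<i_s$) returns $1\neq 0$, so every squarefree monomial in the $r_i$ is nonzero; together with $r_i^2=0$ and the fact that these monomials form a basis of $H(k/k)$, this identifies $H(k/k)$ with the exterior algebra $\Lambda_{\F_2}(r_i)_{i\geq 0}$, with $Q_jr_i=\delta_{ij}$, as asserted. \Qed
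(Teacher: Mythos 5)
The paper does not prove this statement; it simply cites Vishik, \cite[Theorem 3.7]{V}. So the appropriate comparison is between your sketch and Vishik's actual argument, and here the overall architecture you describe --- the localization triangle $\X'\wedge\hz\to\hz\to\X\wedge\hz$, the reduction over a flexible field to a cofinal family of anisotropic Pfister quadrics, the Rost-motive computation producing one generator $r_i$ dual to $Q_i$, and the purely numerical closing arguments --- does track the structure of Vishik's proof. Your concluding combinatorics are also correct: the bidegree formulas ($s=2q-p$, $\sum 2^{a_l}=q-p$) do force distinct squarefree monomials into distinct bidegrees, force $r_i^2=0$, and force $Q_jr_i=0$ for $j\neq i$ by the slope argument, and the derivation property of the $Q_j$ gives nonvanishing of the monomials and the exterior-algebra structure.

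That said, this is a sketch with real gaps rather than a proof, and you flag the largest one yourself. The claim that $\check{C}(Q)\wedge\hz$ can be ``assembled out of the \v{C}ech objects of the $\mathbb{Q}_i$ and their finite products alone'' is the entire technical heart of the theorem and is stated too loosely to be checked: $\check{C}(Q)$ literally involves products of \emph{all} anisotropic varieties, and the reduction to Pfister quadrics over a flexible field is a nontrivial theorem about the structure of $\X'\wedge\hz$ (in Vishik's language, about the isotropic local idempotent and its relation to upper motives / binary direct summands), not just a cofinality remark. Similarly, the paragraph on $\check{C}(\mathbb{Q}_i)$ having ``a two-step filtration whose nontrivial graded piece is a twist of $\mathcal{R}_i$'' is not accurate as stated --- the \v{C}ech motive of a norm quadric admits an infinite filtration by Rost motives, and extracting the single class $r_i$ requires tracking which pieces survive the isotropic localization; this is where the duality $Q_ir_i=1$ actually gets proved (Vishik uses his symmetric operations here, as you guess, but this is the content, not a side normalization). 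Finally, the vanishing of $\tau$ in $H(k/k)$ is simply asserted and deferred; it is not hard, but it is a genuine input and should not be left as a parenthesis. So: right outline, correct endgame, but the two central technical steps (cofinality of Pfister quadrics and the Rost-motive / duality computation) are placeholders rather than arguments.
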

\begin{proof}
	See \cite[Theorem 3.7]{V}. 
\end{proof}

It follows from Theorems \ref{voe} and \ref{vis}, by degree reasons, that the global-to-local map $H(k) \rightarrow H(k/k)$ is trivial everywhere but in bidegree $(0)[0]$ (see \cite[after Theorem 3.7]{V}). In particular, it sends $\tau$ to $0$. In this sense, the global-to-local functor looks like a realisation functor which is somehow perpendicular to topological realisation, which instead sends $\tau$ to $1$ (see \cite[after Definition 2.7]{DI}). Nonetheless, isotropic categories have encoded in their fabric some of the classical topological structures. As an example, Theorem \ref{vis} shows that the isotropic cohomology of the point is the dual of the Milnor subalgebra of the Steenrod algebra.

\section{The isotropic motivic Steenrod algebra}

In order to understand more deeply isotropic categories we would like to have a better grasp on cohomology operations in the isotropic context. In this section we study the isotropic Steenrod algebra and give some structural results which will be useful later on.

The motivic Steenrod algebra $\A(k)$ is the algebra of bistable motivic cohomology operations. It is defined by
$$\A^{p,q}(k)=[\hz,\hz^{p,q}].$$
Analogously, one can define the isotropic motivic Steenrod algebra $\A(k/k)$ by
$$\A^{p,q}(k/k)=Hom_{\SH(k/k)}(\hz,\hz^{p,q})=[\X \wedge \hz,\X \wedge \hz^{p,q}].$$

Killing the Milnor K-theory and mapping $\tau$ to $1$ or to $0$ give two possible quotients of the motivic Steenrod algebra. As we have already noticed in the previous section, the first quotient reflects topological realisation while the second is closer to isotropic realisation. Let us study both these situations more closely.

On the one hand, we obtain the topological Steenrod algebra $\A$ endowed with the morphism $\A(k) \rightarrow \A$ which sends $\tau$ to $1$ and $x$ to $0$ for any $x \in K^M_{>0}(k)/2$. On the other hand, we can define the generalised Steenrod algebra $\A_0$ as the quotient of $\A(k)$ modulo its two-sided ideal generated by the elements of $H(k)$ of bidegree different from $(0)[0]$, and we get a morphism $\A(k) \rightarrow \A_0$ which sends $\tau$ to $0$ and $x$ to $0$ for any $x \in K^M_{>0}(k)/2$.

The dual of the motivic Steenrod algebra has the following structure (see \cite[Theorem 5.6]{HKO} and \cite[Theorem 12.6]{V2}):
$$\A(k)_* \cong {\frac {H(k)[\tau_0,\tau_i,\xi_i]_{i \geq 1}} {(\tau_i^2+\tau \xi_{i+1}+\rho(\tau_{i+1}+\tau_0 \xi_{i+1}))}}$$
where $\tau_i$ has bidegree $(2^i-1)[2^{i+1}-1]$, $\xi_i$ has bidegree $(2^i-1)[2^{i+1}-2]$ for any $i$ and $\rho$ is the class of $-1$ in $H_{-1,-1}(k) \cong K^M_1(k)/2$. Note that in the previous isomorphism $H(k)$ is intended with homological degrees, i.e. $H(k)_{p,q}=H(k)^{-p,-q}$.

It follows that the dual of the topological Steenrod algebra is described by 
$$\A_* \cong \F_2[\tau_i]_{i \geq 0}$$
while the dual of the generalised Steenrod algebra is described by
$$(\A_0)_* \cong \Lambda_{\F_2}(\tau_i)_{i \geq 0} \otimes_{\F_2} \F_2[\xi_j]_{j \geq 1}.$$
The coproduct in $(\A_0)_*$ is given by (see \cite[Lemma 12.11]{V2}):
$$\psi(\xi_k)=\sum_{i=0}^k \xi_{k-i}^{2^i} \otimes \xi_i;$$
$$\psi(\tau_k)=\sum_{i=0}^k \xi_{k-i}^{2^i} \otimes \tau_i+\tau_k\otimes 1.$$

Note that the dual of the generalised Steenrod algebra has the same structure as the dual of the topological Steenrod algebra at odd prime $p$ (see \cite[Theorem 2 and Theorem 3]{M}) but with coefficients in $\F_2$. Indeed, the results in \cite[Section $6$]{M} which are stated for odd primes can be generalised to the prime $2$, providing the same results for the generalised Steenrod algebra just by substituting $\F_p$-coefficients with $\F_2$-coefficients. We recall now the main results we will need in this paper.

The dual of the generalised Steenrod algebra $(\A_0)_*$ is an ${\mathbb F}_2$-vector space with basis given by $\{\tau^E\xi^R\}_{E,R}$ where $E=(e_0,\dots,e_n)$ with $e_i \in \{0,1\}$ and $R=(r_1,\dots,r_m)$ with $r_j \in \Z_{\geq 0}$ and $\tau^E\xi^R=\Pi_{i=0}^n\tau_i^{e_i}\Pi_{j=1}^m\xi_j^{r_j}$. We will denote by $\widehat{\tau^E\xi^R}$ the dual in $\A_0$ of $\tau^E\xi^R$ with respect to the monomial basis of $(\A_0)_*$. Following \cite{M}, one can define some special operations in $\A_0$, namely the Milnor operations $Q_i=\widehat{\tau_i}$ and the operations $P^R=\widehat{\xi^R}$. Moreover, denote by $Q^E$ the product $\Pi_{i=0}^n Q_i^{e_i}$, where $E=(e_0,\dots,e_n)$ with $e_i \in \{0,1\}$.

\begin{thm}\label{mil}
	The elements $Q^EP^R$ form an additive basis for $\A_0$
	which is dual to the basis $\{\tau^E\xi^R\}_{E,R}$ for $(\A_0)_*$. The elements $Q_i$ generate an exterior subalgebra of $\A_0$ which we denote by $\M$ and call the Milnor subalgebra:
	$$\M=\Lambda_{\F_2}(Q_i)_{i \geq 0}.$$
	Milnor operations commute with the elements $P^R$ according to the rule:
	\begin{equation}\label{eq}
	[Q_k,P^R]=Q_{k+1}P^{R-(2^k,0,\dots)}+Q_{k+2}P^{R-(0,2^k,\dots)}+\dots \: . 
	\end{equation}
\end{thm}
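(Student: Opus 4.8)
The plan is to establish this as a direct translation of the corresponding facts for the topological Steenrod algebra at odd primes, which is permissible because, as noted in the paragraph preceding the statement, the dual $(\A_0)_*$ has exactly the form $\Lambda_{\F_2}(\tau_i) \otimes \F_2[\xi_j]$ with the Milnor coproduct, i.e. the same Hopf algebroid structure as the odd-primary topological dual Steenrod algebra but with $\F_2$ replacing $\F_p$ and with the identification $\xi_i \leftrightarrow \xi_i$, $\tau_i \leftrightarrow \tau_i$. So first I would recall Milnor's duality setup: $\A_0$ is the graded dual of the coalgebra $(\A_0)_*$, and the monomial basis $\{\tau^E\xi^R\}$ of $(\A_0)_*$ gives by duality the basis $\{\widehat{\tau^E\xi^R}\}$ of $\A_0$. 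The content to check is that the product structure on $\A_0$ dual to the coproduct on $(\A_0)_*$ makes $\widehat{\tau^E\xi^R}$ equal to the product $Q^E P^R$ (in the appropriate order), which is precisely the multiplicative statement.

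Concretely, the key steps are: (1) Using the coproduct formula $\psi(\tau_k) = \sum \xi_{k-i}^{2^i}\otimes\tau_i + \tau_k\otimes 1$ (here I am reading $\xi_{k-1}$ as $\xi_{k-i}$ in the displayed formula, with $\xi_0 = 1$), show that the dual basis element $\widehat{\tau_i}$ is primitive and that $Q_i := \widehat{\tau_i}$ satisfies $Q_i^2 = 0$; the vanishing of the square follows because $\tau_i^2 = 0$ in the exterior part, so the only monomial that could pair nontrivially with $Q_i \cdot Q_i$ is absent, exactly as in Milnor's argument. (2) Show the $Q_i$ anticommute — but over $\F_2$ this is just commutativity — so they generate an exterior subalgebra $\M = \Lambda_{\F_2}(Q_i)$, using that $(\A_0)_*$ contains $\Lambda_{\F_2}(\tau_i)$ as a sub-Hopf-algebra which is dual to this quotient/sub. (3) Set $P^R := \widehat{\xi^R}$ and verify by a dimension/pairing count that $\{Q^E P^R\}_{E,R}$ is linearly independent and spans, hence is the basis dual to $\{\tau^E\xi^R\}$; this is the bookkeeping that the product $Q^E P^R$ pairs to $1$ with $\tau^E\xi^R$ and to $0$ with all other monomials, which one extracts from the coproduct by iterating. (4) Derive the commutation relation~\eqref{eq}: expanding $\psi$ to compute the pairing $\langle Q_k P^R, \tau^E\xi^{R'}\rangle$ versus $\langle P^R Q_k, -\rangle$ produces exactly the correction terms $Q_{k+1}P^{R-(2^k,0,\dots)} + Q_{k+2}P^{R-(0,2^k,\dots)}+\cdots$, mirroring Milnor's computation verbatim with $\F_2$ in place of $\F_p$ (the $p$-th power maps in the coproduct becoming squares, consistent with the $\xi_{k-i}^{2^i}$ appearing above).

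The cleanest way to organise all of this is simply to invoke \cite{M}: the argument in \cite[Section 6]{M} at an odd prime $p$ uses only the shape of the coproduct on the dual, not any prime-specific numerology beyond the exponents of the Frobenius twists, so replacing $\F_p$ by $\F_2$ and the odd-primary $\tau_i,\xi_i$ by ours yields the three assertions — additive basis, exterior Milnor subalgebra, commutation formula — with identical proofs. I would therefore present the proof as: observe that $(\A_0)_*$ with its coproduct is formally identical to the odd-primary dual Steenrod algebra with $\F_2$-coefficients, note that the relevant arguments of \cite{M} are insensitive to this substitution, and conclude. The main obstacle, such as it is, is purely expository: making sure the reader is convinced that nothing in Milnor's odd-primary derivation secretly used $p$ odd (e.g. that signs which are invisible over $\F_2$ do not hide a cancellation that fails). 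That check is routine but must be stated, so I would explicitly flag the one place signs could have mattered — the (anti)commutativity of the $Q_i$ — and note that over $\F_2$ anticommutativity and commutativity coincide, so the exterior algebra conclusion survives.
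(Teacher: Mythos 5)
Your proposal is correct and takes essentially the same route as the paper: observe that $(\A_0)_*$ is formally Milnor's odd-primary dual Steenrod algebra with $\F_2$ in place of $\F_p$, and import Theorem 4a of Milnor verbatim after checking that no odd-prime-specific ingredient (in particular, no genuinely non-trivial sign) is used. The paper's proof is a one-line citation to \cite[Theorem 4a]{M} with $\F_2$-coefficients, so your elaboration on what one must verify (primitivity of $\widehat{\tau_i}$, the square-zero property from $\tau_i^2=0$, the dual-basis pairing count, and the observation that over $\F_2$ anticommutativity is commutativity) is consistent with — and usefully more explicit than — the argument the paper intends.
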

\begin{proof}
The proof is the same of \cite[Theorem 4a]{M} with ${\mathbb F}_2$-coefficients instead of ${\mathbb F}_p$-coefficients. 
\end{proof}

The basis introduced by the previous theorem is called the Milnor basis for $\A_0$.

\begin{cor}
The elements $Q_i$ can be defined inductively in $\A_0$ by the recursive formulas:
$$Q_0=Sq^1;$$
$$Q_{i+1}=Sq^{2^{i+1}}Q_i+Q_iSq^{2^{i+1}}.$$
\end{cor}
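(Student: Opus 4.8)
The plan is to run Milnor's classical derivation of the inductive formulas inside $\A_0$, taking the commutation relation \eqref{eq} of Theorem \ref{mil} as the essential input and filling the rest with degree bookkeeping and one computation on $\mathbb{P}^{\infty}$. I first dispose of the base case. On a Milnor basis element $Q^EP^R$ the quantity (topological degree) $-$ $2\cdot(\text{weight})$ equals $|E|$, since $P^R$ has topological degree exactly twice its weight while each factor $Q_j$, of bidegree $(2^j-1)[2^{j+1}-1]$, has topological degree $2\cdot(\text{weight})+1$. In bidegree $(0)[1]$ this quantity is $1$, forcing $E$ to consist of a single $Q_j$ of weight $\le 0$, hence $Q_0$, and then $R=0$; so $Q_0$ is the unique Milnor basis element in bidegree $(0)[1]$. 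As $Sq^1$ is nonzero there (it acts as the top Steenrod square on $u\in H_{iso}(\mathbb{P}^{\infty})$ of bidegree $(1)[2]$, or maps to the nontrivial Milnor operation $Q_0$ of Theorem \ref{vis}), we conclude $Q_0=Sq^1$.

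For the inductive step I expand $Sq^{2^{i+1}}$, which lies in bidegree $(2^i)[2^{i+1}]$, in the Milnor basis. There (topological degree) $-$ $2\cdot(\text{weight})$ $=0$, so by the computation above only pure $P$-type basis elements occur: $Sq^{2^{i+1}}=\sum_R c_RP^R$ with $\sum_j r_j(2^j-1)=2^i$. Since $P^R$ has topological degree $2\sum_j r_j(2^j-1)\ge 2r_1$, the unique index with $r_1\ge 2^i$ is $R=(2^i,0,\dots)$; every other $R$ occurring has $r_1<2^i$, and as $\sum_j r_j(2^j-1)=2^i$ forces $r_j(2^j-1)\le 2^i$ it also has $r_j<2^i$ for all $j\ge 2$. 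For such $R$, \eqref{eq} gives $[Q_i,P^R]=Q_{i+1}P^{R-(2^i,0,\dots)}+Q_{i+2}P^{R-(0,2^i,\dots)}+\dots=0$, each summand vanishing because the subtracted vector produces a negative entry; while \eqref{eq} applied to $R=(2^i,0,\dots)$ gives $[Q_i,P^{(2^i,0,\dots)}]=Q_{i+1}P^{(0,\dots)}+Q_{i+2}P^{(2^i,-2^i,\dots)}+\dots=Q_{i+1}$. Hence $[Q_i,Sq^{2^{i+1}}]=c\,Q_{i+1}$ with $c=c_{(2^i,0,\dots)}=\langle Sq^{2^{i+1}},\xi_1^{2^i}\rangle$ the coefficient of $P^{(2^i,0,\dots)}=\widehat{\xi_1^{2^i}}$ in $Sq^{2^{i+1}}$.

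It remains to see $c=1$. Here I would use the $\A_0$-coaction on $H_{iso}(\mathbb{P}^{\infty})=H(k/k)[u]$, $|u|=(1)[2]$. By instability the only nonzero Steenrod operations on $u$ are $Sq^0u=u$ and $Sq^2u=u^2$ (indeed $Sq^1u=0$ and $Sq^nu=0$ for $n>2$), so the coaction reads $\nu(u)=u\otimes 1+u^2\otimes\xi_1$. By multiplicativity of $\nu$ and the Frobenius in characteristic $2$, $\nu(u^{2^i})=u^{2^i}\otimes 1+u^{2^{i+1}}\otimes\xi_1^{2^i}$, so the $u^{2^{i+1}}$-coefficient of $Sq^{2^{i+1}}(u^{2^i})$ equals $\langle Sq^{2^{i+1}},\xi_1^{2^i}\rangle$; comparing with $Sq^{2^{i+1}}(u^{2^i})=(u^{2^i})^2=u^{2^{i+1}}$ (the top square, as $u^{2^i}$ has bidegree $(2^i)[2^{i+1}]$) forces $c=\langle Sq^{2^{i+1}},\xi_1^{2^i}\rangle=1$. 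Therefore $Q_{i+1}=[Q_i,Sq^{2^{i+1}}]=Sq^{2^{i+1}}Q_i+Q_iSq^{2^{i+1}}$ (the sign disappearing mod $2$), completing the induction.

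The combinatorial heart is the vanishing $[Q_i,P^R]=0$ for the non-leading indices $R$, which is precisely the computation already carried out in Milnor's odd-primary argument, so given Theorem \ref{mil} no step is deep. The point I would be most careful about is the last paragraph: checking that $\A_0$, being the quotient of the motivic Steenrod algebra by the ideal generated by positive-degree classes of $H(k)$ — which act by $0$ on isotropic cohomology — really does act on $H_{iso}(\mathbb{P}^{\infty})$ with $(\A_0)_*$ as the relevant dual, and that the instability of the isotropic squares genuinely pins the coefficient of $P^{(2^i,0,\dots)}$ in $Sq^{2^{i+1}}$ to $1$ (equivalently, that $Sq^{2^{i+1}}$ remains nonzero in $\A_0$).
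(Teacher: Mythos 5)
Your proof is essentially the same as the paper's: the paper proves this corollary simply by citing Milnor's Corollary~2 (with the remark earlier in the section that the odd-primary arguments of Milnor's \S6 transfer verbatim to $\mathbb{F}_2$-coefficients in $\A_0$), and your argument is exactly Milnor's derivation carried out inside $\A_0$ using the commutator formula \eqref{eq}. The degree bookkeeping (noting that $p-2q=|E|$ on a Milnor basis element $Q^EP^R$), the identification of $(2^i,0,\dots)$ as the unique surviving index in $[Q_i,Sq^{2^{i+1}}]$, and the pairing computation via the coaction on $H_{iso}(\mathbb{P}^\infty)$ are all correct and parallel Milnor's proof.

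Three minor imprecisions are worth fixing. First, in the base case the claim that ``$Sq^1$ acts as the top Steenrod square on $u$ of bidegree $(1)[2]$'' is false; the top square on $u$ is $Sq^2$, and indeed $Sq^1u=0$. The alternative justification (``maps to the nontrivial $Q_0$'') is circular, as $Q_0=Sq^1$ is what is being proved. The quick correct argument: every element of $\A(k)$ has non-negative weight, and the two-sided ideal defining $\A_0$ is generated by elements of strictly positive weight, so it contains nothing in weight $0$; since $Sq^1$ lies in weight $0$, it survives in $\A_0$ and hence must be the unique Milnor basis element $Q_0$ there. Second, the vanishing $Sq^1u=0$ should not be attributed to instability (the instability bound does not rule out $Sq^1$ on a class of top degree $2$); it holds because $u$ is the mod-$2$ reduction of an integral Chern class, killed by the Bockstein. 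Third, the stated coaction $\nu(u)=u\otimes 1+u^2\otimes\xi_1$ is incomplete — the full formula is $\nu(u)=\sum_{j\ge 0}u^{2^j}\otimes\xi_j$, and ``instability'' does not preclude the $j\ge 2$ terms (they are invisible to the $Sq^n$ you list because they pair with $\xi_j$, $j\ge 2$, not with $\xi_1^n$). This omission is harmless to your computation, since after raising to the $2^i$-th power only the $\xi_1^{2^i}$ component is read off, but the display as written is wrong as an equality and should be corrected.
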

\begin{proof}
	See \cite[Corollary 2]{M}. 
\end{proof}

Theorem \ref{mil} implies that $\A_0$ is a free left module over $\M$ with basis given by $\{P^R\}_{R}$. In the next proposition we use formula \ref{eq} to construct another basis for the generalised Steenrod algebra.

\begin{prop}\label{basis}
$\{P^RQ^E\}_{R,E}$ is a basis for $\A_0$ as an ${\mathbb F}_2$-vector space.
\end{prop}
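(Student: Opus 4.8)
The plan is to pass between the two bases $\{Q^E P^R\}$ and $\{P^R Q^E\}$ by showing that the change-of-basis matrix is unitriangular with respect to a suitable grading or ordering, so that invertibility over $\F_2$ is automatic. Concretely, Theorem \ref{mil} already gives us that $\{Q^E P^R\}_{R,E}$ is a basis, and formula \eqref{eq} tells us how to commute a single $Q_k$ past a single $P^R$: the commutator $[Q_k, P^R]$ is a sum of terms $Q_{k+j} P^{R-(\dots)}$ in which the total "$P$-length" $|R| = \sum r_j$ has strictly decreased. So the first step is to set up the right filtration: I would filter $\A_0$ by the $P$-length, i.e. by $\sum_j r_j$ of the monomials $P^R$ appearing (this is well defined because, by Theorem \ref{mil}, $\A_0$ is free over $\M$ on the $P^R$, so every element has a unique expansion $\sum_R m_R P^R$ with $m_R \in \M$, and one takes the maximal $|R|$ occurring).

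The second step is the key computation: given a monomial $Q^E P^R$, I move each $Q_i$ in $Q^E$ to the right past $P^R$ using \eqref{eq} repeatedly. Each application either leaves a $Q$ on the left of an unchanged $P^R$ (the "$P^R Q_i$" term from $Q_i P^R = P^R Q_i + [Q_i,P^R]$) or produces correction terms of strictly smaller $P$-length. Iterating over all the factors of $Q^E$, I conclude that
$$Q^E P^R = P^R Q^E + (\text{terms } P^{R'} Q^{E'} \text{ with } |R'| < |R|),$$
where on the right I have already rewritten the lower-filtration corrections in the putative new basis by downward induction on $|R|$. This shows simultaneously that $\{P^R Q^E\}$ spans $\A_0$ (by induction on $P$-length, starting from $|R|=0$ where $Q^E P^0 = P^0 Q^E$ trivially, i.e. the two bases agree on the Milnor subalgebra $\M$ itself) and that the transition matrix from $\{Q^E P^R\}$ to $\{P^R Q^E\}$ is unitriangular for the $P$-length filtration. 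Hence $\{P^R Q^E\}_{R,E}$ is a basis for $\A_0$ as an $\F_2$-vector space.

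The main obstacle I anticipate is bookkeeping rather than anything conceptual: one must be careful that the corrections $Q_{k+1} P^{R-(2^k,0,\dots)} + \dots$ coming from \eqref{eq} genuinely lower the $P$-length in every term (they do, since one subtracts a positive amount from some $r_j$), and that when a factor $Q_{k+j}$ is created it does not spoil the exterior structure — here one uses that $\M = \Lambda_{\F_2}(Q_i)_{i\ge 0}$ is exterior, so $Q^{E'}$ with repeated indices simply vanishes, which only kills terms and never obstructs the argument. A minor point to check is that the filtration is compatible with the bidegrees so that everything stays inside finite-dimensional graded pieces, making the unitriangularity argument legitimate; this is immediate from the bidegrees of $Q_i$ and $P^R$ recorded above. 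Once these are in place, the inductive rewriting closes and the proposition follows.
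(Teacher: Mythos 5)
Your proposal is correct and follows essentially the same route as the paper: both use the commutator formula \eqref{eq} to move Milnor operations past $P^R$, inducting on a measure of $R$ that strictly drops in the correction terms, and both conclude by a finiteness/counting argument in each bidegree. The only cosmetic differences are that you filter by total $P$-length $\lvert R\rvert=\sum_j r_j$ while the paper uses the componentwise partial order on $R$ (either works, since the corrections decrease both), and you package the conclusion as unitriangularity of the change-of-basis matrix whereas the paper separately proves spanning and then counts monomials in each degree.
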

\begin{proof}
First, we want to prove that $\{P^RQ^E\}_{R,E}$ generates $\A_0$. In order to do so, it is enough to show that any $Q^EP^R$ belongs to the ${\mathbb F}_2$-vector space generated by $\{P^RQ^E\}_{R,E}$.

Let us start by showing that any $Q_iP^R$ belongs to the ${\mathbb F}_2$-vector space generated by $\{P^RQ^E\}_{R,E}$. Define the following order in $\oplus_{j=1}^{\infty}\Z_{\geq 0}$: $S \leq R$ if and only if $s_1 \leq r_1$, $s_2 \leq r_2$, $\dots$ where $r_j=0$ for almost all $j$. We want to prove our first statement by a simultaneous induction on $R$, for all $i$. If $R=\overline{0}$, there is nothing to prove. Now, suppose that the statement holds for any $S<R$. Then,
$$Q_iP^R=P^RQ_i+Q_{i+1}P^{R-(2^i,0,\dots)}+Q_{i+2}P^{R-(0,2^i,\dots)}+\dots=P^RQ_i+Y$$
where $Y$ is expressible in terms of $\{P^RQ^E\}_{R,E}$ by induction hypothesis. Hence, $Q_iP^R$ is expressible too.

At this point, we want to prove the main statement. We proceed by induction on the number of $1$s in $E$. If $E=\overline{0}$ there is nothing to prove. Now suppose the statement is true for $F$ such that $E=F+(0,\dots,0,1,0,\dots)$ where the $1$ in the second summand is in the $i$-th position where $F$ has a $0$. Now, we have that
$$Q^EP^R=Q_iQ^FP^R=Q_i\sum P^SQ^G$$
by induction hypothesis. But we have already proven that each $Q_iP^S$ is expressible in terms of $\{P^RQ^E\}_{R,E}$, therefore $Q^EP^R$ is expressible too.

In order to conclude that $\{P^RQ^E\}_{R,E}$ is indeed a basis, we just notice that in each degree there is the same number of $P^RQ^E$ and $Q^EP^R$. 
\end{proof}

From the previous proposition it immediately follows that $\A_0$ is also a free right module over $\M$ with basis given by $\{P^R\}_{R}$.

Now, let $X$ be an infinite matrix
$$ 
\begin{pmatrix} 
* & x_{01} & x_{02} & \dots \\
x_{10} & x_{11} & \dots & \\
x_{20} & \dots & & \\
\vdots & & &
\end{pmatrix}
$$
with nonnegative entries almost all $0$, $R(X)=(r_1,r_2,\dots)$ where $r_i=\sum_j2^jx_{ij}$, $S(X)=(s_1,s_2,\dots)$ where $s_j=\sum_ix_{ij}$, $T(X)=(t_1,t_2,\dots)$ where $t_n=\sum_{i+j=n}x_{ij}$ and 
$$b(X)={\frac {\Pi t_n!}{\Pi x_{ij}!}}.$$

\begin{thm}
	In $\A_0$ the following product formula holds:
$$P^RP^S=\sum_{X|R(X)=R,S(X)=S}b(X)P^{T(X)}.$$
\end{thm}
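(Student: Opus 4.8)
The plan is to dualize the problem and reduce it to a computation of the coproduct on $(\A_0)_*$. By construction $P^R=\widehat{\xi^R}$ is the element of $\A_0$ dual to the monomial $\xi^R$ with respect to the basis $\{\tau^E\xi^R\}_{E,R}$ of $(\A_0)_*$, so the coefficient of $P^T$ in the product $P^RP^S$ equals the pairing $\langle P^R\otimes P^S,\psi(\xi^T)\rangle$, i.e.\ the coefficient of $\xi^R\otimes\xi^S$ in $\psi(\xi^T)$. Before carrying this out I would first record that $P^RP^S$ has no component on the remaining Milnor-basis elements $Q^EP^T$ with $E\neq\overline 0$: for this, expand $\psi(\tau^E\xi^T)=\psi(\tau^E)\,\psi(\xi^T)$ and note that, by the formula for $\psi(\tau_k)$ recalled before Theorem \ref{mil}, every monomial occurring in $\psi(\tau_k)$ carries some $\tau_\ell$ in one of its two tensor factors; hence if $E\neq\overline 0$ every term of $\psi(\tau^E\xi^T)$ has a $\tau_\ell$ on the left or on the right and therefore pairs to $0$ against $P^R\otimes P^S$, which is concentrated on pure $\xi$-monomials. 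This is precisely the statement that $\F_2[\xi_j]_{j\geq 1}$ is a sub-Hopf-algebra of $(\A_0)_*$ whose dual quotient Hopf algebra of $\A_0$ is spanned by the $P^R$.

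It then remains to expand $\psi(\xi^T)=\prod_{n\geq 1}\psi(\xi_n)^{t_n}$ using $\psi(\xi_n)=\sum_{i+j=n}\xi_j^{2^i}\otimes\xi_i$ (with the convention $\xi_0=1$). Applying the multinomial theorem over $\F_2$ to each factor $\psi(\xi_n)^{t_n}$ and then multiplying over $n$, one encodes a choice of term as an infinite matrix $X=(x_{ab})$, where the entry receiving the term $\xi_{n-i}^{2^i}\otimes\xi_i$ of $\psi(\xi_n)$ sits on the $n$-th anti-diagonal, with its row index read off from the subscript of the left-hand $\xi$ and its column index from the subscript of the right-hand one (equivalently, from the exponent of $2$ on the left). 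Collecting exponents, the left tensor factor becomes $\xi^{R(X)}$, the right tensor factor becomes $\xi^{S(X)}$, the per-$n$ size constraints $\sum_{i+j=n}x_{ij}=t_n$ say exactly that $T(X)=T$, and the product over $n$ of the multinomial coefficients $\binom{t_n}{(x_{ij})_{i+j=n}}$ reduces mod $2$ to $b(X)$. Hence $\psi(\xi^T)=\sum_{X\,:\,T(X)=T}b(X)\,\xi^{R(X)}\otimes\xi^{S(X)}$, so that $\langle P^RP^S,\xi^T\rangle=\sum_{X\,:\,R(X)=R,\,S(X)=S,\,T(X)=T}b(X)$, and summing over all $T$ collapses this to the asserted formula.

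I do not expect a genuine obstacle here: this is Milnor's classical computation of the product in the Milnor basis (\cite[Theorem 4b]{M}), and since the Hopf-algebra structure on the $\xi$-part of $(\A_0)_*$ is identical to that of the topological dual Steenrod algebra at $2$, nothing in that argument changes upon passing to $\A_0$ — the only extra point, dealt with in the first paragraph, is that the $\tau_i$ do not interfere. The mildly fiddly part is pure bookkeeping: setting up the matrix indexing so that $R(X)$, $S(X)$ and $T(X)$ as defined just above the statement match the exponents appearing on the two tensor factors and on the diagonals, and remembering throughout that all binomial and multinomial coefficients are to be read modulo $2$.
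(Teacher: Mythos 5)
Your proposal is correct and is essentially the paper's proof: the paper simply cites Milnor's Theorem 4b and observes that the argument carries over verbatim with $\F_2$-coefficients in place of $\F_p$-coefficients, whereas you have spelled out what Milnor's dualization-plus-matrix-bookkeeping argument actually is, including the (already present in Milnor's odd-primary proof) observation that the $\tau_i$'s contribute nothing since $\F_2[\xi_j]_{j\ge 1}$ is a sub-Hopf-algebra of $(\A_0)_*$. The indexing conventions you lay out (row index from the subscript of the left-hand $\xi$, column index from the subscript of the right-hand $\xi$, antidiagonal constraint giving $T(X)$, and product of multinomial coefficients reducing to $b(X)$ mod $2$) match the paper's definitions of $R(X)$, $S(X)$, $T(X)$, $b(X)$, so the bookkeeping is sound.
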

\begin{proof}
The proof is the same of \cite[Theorem 4b]{M} with ${\mathbb F}_2$-coefficients instead of ${\mathbb F}_p$-coefficients. 
\end{proof}

It follows from the previous theorem that the ${\mathbb F}_2$-vector space generated by $\{P^R\}_R$ is a subalgebra of $\A_0$ which we denote by $\G$. 

\begin{prop}\label{2si}
There are isomorphisms
$$\A_0 \cong \M \otimes_{\F_2} \G \cong \G \otimes_{\F_2} \M.$$
In particular, the right ideal generated by the elements of $\M$ of positive degree is two-sided in $\A_0$ and the quotient of $\A_0$ modulo it is isomorphic to $\G$.
\end{prop}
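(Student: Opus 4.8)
The plan is to read off the two displayed isomorphisms directly from the basis descriptions of $\A_0$ already available, and then to deduce the statement about the ideal by commuting the operations $P^R$ past the Milnor operations using formula~(\ref{eq}). For the isomorphisms, consider the two multiplication maps $\M\otimes_{\F_2}\G\to\A_0$ and $\G\otimes_{\F_2}\M\to\A_0$. The monomials $\{Q^E\}_E$ form an $\F_2$-basis of $\M$ and the $\{P^R\}_R$ form an $\F_2$-basis of $\G$ (they are part of the Milnor basis, hence linearly independent), so the first map carries $\{Q^E\otimes P^R\}$ bijectively onto the Milnor basis $\{Q^EP^R\}$ of $\A_0$ (Theorem~\ref{mil}) and the second carries $\{P^R\otimes Q^E\}$ bijectively onto the basis $\{P^RQ^E\}$ of $\A_0$ (Proposition~\ref{basis}); hence both are $\F_2$-linear isomorphisms, and they are respectively left- and right-$\M$-linear.

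For the second assertion, write $\M_+\subset\M$ for the ideal of positive-degree elements and let $I$ be the right ideal of $\A_0$ generated by $\M_+$. Since $\A_0$ is free as a left $\M$-module on $\{P^R\}_R$ we have $\A_0=\M\cdot\G$, and combined with $\M_+\cdot\M=\M_+$ this gives $I=\M_+\cdot\A_0=\M_+\cdot\G$, which by Theorem~\ref{mil} is exactly the span of those Milnor basis elements $Q^EP^R$ with $E\neq\overline{0}$; hence $\A_0=\G\oplus I$ as $\F_2$-vector spaces. Once $I$ is known to be two-sided, the rest is immediate: $\G$ is a subalgebra of $\A_0$ by the product formula for the $P^R$, and since $\G\cap I=0$ and $\G+I=\A_0$, the composite $\G\hookrightarrow\A_0\twoheadrightarrow\A_0/I$ is an isomorphism of algebras.

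It thus remains to prove that $I$ is also a left ideal, which is the crux. It is enough to show $\G\cdot\M_+\subseteq\M_+\cdot\G$: granting this, from $\A_0=\G\cdot\M$ (as $\A_0$ is free as a right $\M$-module on $\{P^R\}_R$, Proposition~\ref{basis}) and $\M\cdot\M_+=\M_+$ we get $\A_0\cdot\M_+=\G\cdot\M_+\subseteq I$, whence $\A_0\cdot I=(\A_0\cdot\M_+)\cdot\G\subseteq I\cdot\G\subseteq\M_+\cdot\G=I$, using that $\G$ is a subalgebra. To prove $\G\cdot\M_+\subseteq\M_+\cdot\G$ it suffices to check $P^RQ^E\in\M_+\cdot\G$ for every $R$ and every $E\neq\overline{0}$, which I would establish by induction on the number of $1$'s in $E$. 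When $E$ has a single $1$, in position $k$ (so $Q^E=Q_k$), formula~(\ref{eq}) gives
$$P^RQ_k=Q_kP^R+Q_{k+1}P^{R-(2^k,0,\dots)}+Q_{k+2}P^{R-(0,2^k,\dots)}+\cdots,$$
a finite sum of Milnor basis elements $Q^FP^S$ with $F\neq\overline{0}$, so $P^RQ_k\in\M_+\cdot\G$. For general $E\neq\overline{0}$, write $Q^E=Q_kQ^{E'}$ with $k$ the least index in the support of $E$; then $P^RQ^E=(P^RQ_k)Q^{E'}$, and expanding $P^RQ_k=\sum_\alpha Q^{F_\alpha}P^{S_\alpha}$ with each $F_\alpha\neq\overline{0}$, and then, by the inductive hypothesis (since $|E'|=|E|-1\geq1$), each $P^{S_\alpha}Q^{E'}=\sum_\beta Q^{G_\beta}P^{T_\beta}$ with each $G_\beta\neq\overline{0}$, we obtain $P^RQ^E=\sum_{\alpha,\beta}\big(Q^{F_\alpha}Q^{G_\beta}\big)P^{T_\beta}$; since $\M$ is an exterior algebra and $F_\alpha\neq\overline{0}$, each product $Q^{F_\alpha}Q^{G_\beta}$ lies in $\M_+$, so $P^RQ^E\in\M_+\cdot\G$. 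This closes the induction.

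The genuinely computational ingredient is only this last commutation step; the rest is bookkeeping with the two bases. It may be worth noting that the result is also visible conceptually: $\F_2[\xi_j]_{j\geq1}$ is a sub-Hopf-algebra of $(\A_0)_*$, so $\G$ is dually a quotient Hopf algebra of $\A_0$, and the kernel of $\A_0\twoheadrightarrow\G$, which one checks equals the span of the $Q^EP^R$ with $E\neq\overline{0}$, that is our $I$, is therefore automatically a two-sided ideal --- the elementary argument above simply avoids setting up that duality.
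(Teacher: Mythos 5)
Your proof is correct and follows exactly the approach the paper indicates — the paper's proof is the one-line remark that the result "follows immediately from Proposition~\ref{basis} and from the commutator formula~\eqref{eq}," and your argument simply spells out those details (the two multiplication maps carrying $\{Q^E\otimes P^R\}$ and $\{P^R\otimes Q^E\}$ onto the two bases, and the induction using~\eqref{eq} to show $\G\cdot\M_+\subseteq\M_+\cdot\G$). Both routes are the same; you have supplied the bookkeeping the paper leaves implicit.
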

\begin{proof}
	It follows immediately from Proposition \ref{basis} and from the commutator formula \ref{eq}. 
\end{proof}

Note that, by degree reasons, $Sq^{2r}$ belongs to $\G$ for any $r \geq 0$, since Milnor operations lie above the slope 2 line, i.e. $\M^{p,q}$ is trivial for $2q \geq p >0$ . The next result tells us that, indeed, $\G$ is nothing else but the classical Steenrod algebra.

\begin{prop}\label{iso}
	There is an isomorphism of algebras $\A \rightarrow \G$ mapping $Sq^r$ to $Sq^{2r}$ for any $r \geq 0$.
\end{prop}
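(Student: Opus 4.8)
The plan is to pass to duals. By Proposition \ref{2si}, $\G$ is the quotient of $\A_0$ by the two-sided ideal generated by the Milnor operations; since the $Q_i$ are primitive, $\M=\Lambda_{\F_2}(Q_i)_{i\geq 0}$ is a sub-Hopf-algebra of $\A_0$, so this presents $\G$ as the quotient Hopf algebra $\A_0/\!/\M$. Dualizing, $\G_*$ is the sub-Hopf-algebra of $(\A_0)_*=\Lambda_{\F_2}(\tau_i)_{i\geq 0}\otimes_{\F_2}\F_2[\xi_j]_{j\geq 1}$ spanned by the monomials in the $\xi_j$ alone, i.e. $\G_*=\F_2[\xi_j]_{j\geq 1}$ with $|\xi_j|=(2^j-1)[2(2^j-1)]$ and coproduct $\psi(\xi_k)=\sum_{i=0}^{k}\xi_{k-i}^{2^i}\otimes\xi_i$ (where $\xi_0=1$).

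Next I would observe that this is exactly Milnor's presentation of the classical dual Steenrod algebra, $\A_*=\F_2[\zeta_j]_{j\geq 1}$ with $|\zeta_j|=2^j-1$ and $\psi(\zeta_k)=\sum_{i=0}^{k}\zeta_{k-i}^{2^i}\otimes\zeta_i$, the only difference being that the topological degree of $\xi_j$ is twice that of $\zeta_j$. Hence $\zeta_j\mapsto\xi_j$ defines an isomorphism of Hopf algebras $\A_*\to\G_*$ (doubling topological degrees), and dualizing it — both sides being connected and of finite type — gives a Hopf algebra isomorphism $\Phi\colon\A\to\G$, in particular an isomorphism of algebras, which carries the monomial-dual (Milnor) basis of $\A$ to that of $\G$, i.e. $Sq^R\mapsto P^R$ in the notation of Proposition \ref{basis}. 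This algebra isomorphism $Sq^R\mapsto P^R$ can also be seen directly, bypassing duals, from the fact that Milnor's product formula for $\{Sq^R\}$ and the product formula for $\{P^R\}$ recorded above are literally the same; the dual picture is only needed to keep track of the distinguished generators.

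It remains to identify $\Phi$ on the generators $Sq^r$, and this is the only non-formal step. Classically $Sq^r=\widehat{\zeta_1^{\,r}}$, the element dual to $\zeta_1^r$ in the monomial basis: $\widehat{\zeta_1^{\,r}}$ acts on $H^*(B\Z/2;\F_2)=\F_2[u]$ by $u^m\mapsto\binom{m}{r}u^{m+r}$, since $\nu(u)=\sum_i u^{2^i}\otimes\zeta_i$, and the $\A$-action on a product of sufficiently many copies of $B\Z/2$ is faithful. On the other side, the image in $\A_0$ of Voevodsky's motivic operation $Sq^{2r}$ is the element dual to $\xi_1^r$, hence its class in $\G$ is $P^{(r)}=\widehat{\xi_1^{\,r}}$; this is part of the structure theory of the motivic Steenrod algebra (the motivic Milnor basis, normalized so that $Sq^2$ is dual to $\xi_1$, compatibly with Betti realization, under which $\xi_1\mapsto\zeta_1^2$), or can be extracted from the coaction on $H^{**}(B\mu_2)$. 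Since $\Phi(\widehat{\zeta_1^{\,r}})=\widehat{\xi_1^{\,r}}$, we conclude $\Phi(Sq^r)=Sq^{2r}$. The main obstacle is precisely this last identification of the \emph{named} generators across the two Hopf algebras: degree reasons alone do not pin it down (for instance $\G$ is two-dimensional in bidegree $(3)[6]$, containing $P^{(3)}$ and $P^{(0,1)}$), so one genuinely needs the fact that $Sq^{2r}$ equals $\widehat{\xi_1^{\,r}}$ inside $\A_0$; everything else is bookkeeping with the degree-doubling.
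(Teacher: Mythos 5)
Your proposal is correct, but it takes a genuinely different route from the paper. The paper works entirely on the primal side: it observes that $\A_0$ is generated over $\F_2$ by the $Sq^r$, that $\G$ is generated by the even squares $Sq^{2r}$ (the only generators on the slope-$2$ line), and that the relations in $\G$ are precisely the motivic Adem relations among even squares with $\tau$ set to $0$; a short mod-$2$ binomial computation, $\binom{2s-2t-1}{2r-4t}\equiv\binom{s-t-1}{r-2t}$, then shows these coincide with the classical Adem relations in $\A$ under the degree-doubling substitution $Sq^r\mapsto Sq^{2r}$. You instead pass to duals, observe that $\M$ is a normal sub-Hopf-algebra of $\A_0$ (the $Q_i$ being primitive, and normality coming from Proposition~\ref{2si}), so that $\G=\A_0/\!/\M$, and then identify $\G_*\cong\F_2[\xi_j]_{j\geq1}$ with Milnor's $\A_*=\F_2[\zeta_j]$ under degree doubling; this produces a Hopf-algebra isomorphism $\Phi\colon\A\to\G$ sending $Sq^R\mapsto P^R$. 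What your route buys is more structure for free (a Hopf isomorphism, and the product-formula match $Sq^R\mapsto P^R$ for the entire Milnor basis, not just the generators); what it costs is the extra input you correctly flag, namely that the image of $Sq^{2r}$ in $\A_0$ is the monomial-dual $\widehat{\xi_1^{\,r}}$ — that step is not settled by degree reasons (as your $(3)[6]$ example shows) and requires knowing the normalization of the motivic Milnor basis, e.g.\ via the $\A(k)_*$-coaction on $H^{**}(B\mu_2)$ or by comparison with Betti realization. The paper's proof, by contrast, sidesteps any identification of $Sq^{2r}$ inside the Milnor basis and instead uses the Adem presentation, which is perhaps the more elementary (if less conceptual) path; both arrive at the same map $Sq^r\mapsto Sq^{2r}$.
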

\begin{proof}
	First, note that, since $\A(k)$ is generated as an $H(k)$-algebra by the Steenrod squares $Sq^r$, then also $\A_0$ is generated as an $\F_2$-algebra by the Steenrod squares $Sq^r$. It follows that $\G$ is generated as an algebra by $\{Sq^{2r}\}_{r \geq 0}$ since all operations in $\G$ lie on the slope $2$ line, i.e. $\G^{p,q}$ is trivial when $p \neq 2q$. Moreover, the only relations among Steenrod squares in $\G$ are the Adem relations in $\A(k)$ involving even squares, namely
	$$Sq^{2r}Sq^{2s}=\sum_{c=0}^r {\binom {2s-c-1}{2r-2c}}\tau^{c \: mod2}Sq^{2r+2s-c}Sq^c$$
	in which we impose $\tau=0$. It follows that the following are the only relations holding in $\G$
	$$Sq^{2r}Sq^{2s}=\sum_{t=0}^{[r/2]} {\binom {2s-2t-1}{2r-4t}}Sq^{2r+2s-2t}Sq^{2t}.$$
	 Looking at binary presentations it is easy to see that 
	 $${\binom {2s-2t-1}{2r-4t}}={\binom {s-t-1}{r-2t}} \: mod \: 2$$
	 from which it follows that in $\G$
	 $$Sq^{2r}Sq^{2s}=\sum_{t=0}^{[r/2]} {\binom {s-t-1}{r-2t}}Sq^{2r+2s-2t}Sq^{2t}.$$
	 Now, recall that $\A$ is the algebra generated by Steenrod squares $\{Sq^r\}_{r \geq 0}$ subject to the classical Adem relations
	 $$Sq^rSq^s=\sum_{t=0}^{[r/2]} {\binom {s-t-1}{r-2t}}Sq^{r+s-t}Sq^t.$$
	 It immediately follows that the homomorphism $\A \rightarrow \G$ mapping $Sq^r$ to $Sq^{2r}$ is well defined and, indeed, an isomorphism. 
\end{proof}

Once we understood a bit more what the projection of $\tau$ to $0$ implies on the level of cohomology operations, let us give a description of the isotropic Steenrod algebra.

\begin{prop}\label{st}
	Let $k$ be a flexible field. Then, there exists an isomorphism of $H(k/k)-\A_0$-bimodules
	$$\A(k/k) \cong H(k/k) \otimes_{\F_2} \A_0.$$	
\end{prop}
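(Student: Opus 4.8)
The plan is to compute the isotropic Steenrod algebra by relating it to the motivic Steenrod algebra $\A(k)$ via the global-to-local functor, and then using the known structure of $\A(k)$ together with Theorem~\ref{vis}. First I would observe that, by definition, $\A^{p,q}(k/k) = [\X\wedge\hz, \X\wedge\hz^{p,q}]$, so I want to understand $\X\wedge\hz$ as a module spectrum. The key computational input is the dual motivic Steenrod algebra formula quoted before Proposition~\ref{basis}: $\A(k)_*$ is a free $H(k)$-module on the monomials $\{\tau^E\xi^R\}$. Dually, $\A(k)$ is a free left (and right) $H(k)$-module, so $\hz\wedge\hz \simeq \bigvee H(k)$-suspensions of $\hz$ indexed by a basis, and smashing with $\X$ turns $H(k)$-coefficients into $H(k/k)$-coefficients on the nose since $\X\wedge\hz$ has homotopy groups $H(k/k)$ by the analogue of Theorem~\ref{vis} (this is exactly the isotropic motivic cohomology of the point).

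Concretely, I would argue as follows. The global-to-local functor $\SH(k)\to\SH(k/k)$ is symmetric monoidal, so it sends the $H(k)$-algebra $\A(k)$ to an $H(k/k)$-algebra, and since $\A(k)$ is free as an $H(k)$-module on, say, the admissible monomials in the $Sq^i$, we get that $\A(k/k)$ is free as an $H(k/k)$-module on the images of those same monomials. The content is then to identify this with $H(k/k)\otimes_{\F_2}\A_0$. For that I would use that $H(k/k) \cong H(k)\otimes_{H(k)} H(k/k)$ and that the global-to-local map $H(k)\to H(k/k)$ kills $\tau$ and all of $K^M_{>0}(k)/2$ (noted after Theorem~\ref{vis}), so that the induced base change on $\A(k)_*$ precisely imposes $\tau=0$, $\rho=0$ and kills higher Milnor $K$-theory classes — which is exactly the definition of $(\A_0)_*$ up to the $H(k/k)$-coefficients. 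So $\A(k/k)_* \cong H(k/k)\otimes_{\F_2}(\A_0)_*$ as $H(k/k)$-comodule algebras, and dualizing (everything is degreewise finite-dimensional over $\F_2$ in each bidegree, so duality is clean) gives $\A(k/k)\cong H(k/k)\otimes_{\F_2}\A_0$ as $H(k/k)$-$\A_0$-bimodules.

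The main obstacle I expect is making the duality/base-change argument rigorous: one must check that smashing with $\X$ commutes with the relevant infinite wedge decomposition of $\hz\wedge\hz$ and that passing to homotopy groups of $\X\wedge\hz\wedge\hz$ genuinely yields $H(k/k)\otimes_{H(k)}\A(k)_*$ rather than something with a lim$^1$ or completion defect. Here I would lean on the fact that $\A(k)$ is a free $H(k)$-module (so $\hz\wedge\hz$ is a wedge, not just a filtered colimit with extension problems) and on Proposition~\ref{hl}-type closure properties, together with the universal coefficient spectral sequence for $H(k/k)$-homology degenerating because $\A_0$ is $H(k/k)$-flat (indeed free). A secondary subtlety is keeping track of the bimodule structure: the left $H(k/k)$-action is the obvious one, while the right $\A_0$-action comes from the composition pairing on cohomology operations, and one must check that the isomorphism $\A(k/k)_*\cong H(k/k)\otimes_{\F_2}(\A_0)_*$ respects the comodule structure over the dual, which follows from naturality of the global-to-local functor applied to the coaction $\hz\to\hz\wedge\hz\wedge\hz$. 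Once these points are in place the statement follows by purely formal manipulation.
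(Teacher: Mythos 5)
Your proposal hinges on the same two facts that drive the paper's proof: the splitting of $\hz\wedge\hz$ as a (motivically finite type) wedge of suspensions of $\hz$ — equivalently, as you note, that $\A(k)$ is a free $H(k)$-module — and the fact that the global-to-local map $H(k)\to H(k/k)$ is trivial outside bidegree $(0)[0]$. The difference is the route. The paper's proof works directly with cohomology operations: it uses the adjunction $(\Mot,\HM)$ to rewrite $\A^{p,q}(k/k)\cong Hom_{\DM(k/k)}(\Mot(\hz),\one(q)[p])$, then invokes the fact (cited from \cite[Corollary 3.33]{Vm} and \cite[Theorem 3.2]{HKO}) that $\Mot(\hz)$ is a \emph{split proper Tate motive}, which immediately gives $\A(k/k)\cong H(k/k)\otimes_{H(k)}\A(k)$ and hence the claim after base change. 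There is no pass through the dual and no dualization step. You instead compute $\A(k/k)_*$ first — as $\pi_{**}(\X\wedge\hz\wedge\hz)$, using the wedge decomposition — and then dualize back to $\A(k/k)$, promising that ``duality is clean'' because everything is bidegree-wise finite. That dualization is exactly where the care lies: $\A(k/k)=[\X\wedge\hz,\X\wedge\hz^{p,q}]$ is a priori a product over the Tate summands while $\A(k/k)_*$ is a sum, and reconciling the two requires precisely the finite-type condition that the phrase ``proper Tate'' is packaging, plus a check that the composition pairing really is the linear dual of the coproduct. Your proposal is conceptually sound and would give a correct argument once those points are pinned down; the paper's route avoids them entirely by never leaving the cohomological side, making the adjunction-plus-split-proper-Tate argument both shorter and easier to make rigorous. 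A minor imprecision worth fixing: you say the symmetric monoidal global-to-local functor ``sends the $H(k)$-algebra $\A(k)$ to an $H(k/k)$-algebra,'' but $\A(k)$ is a set of operations, not an object of $\SH(k)$; what the functor acts on is $\hz\wedge\hz$, and the statement about $\A(k)$ is a consequence one has to extract, as the paper does via the Tate splitting.
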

\begin{proof}
	By definition and adjunction
	$$\A^{p,q}(k)=[\hz,\hz^{p,q}] \cong Hom_{\DM(k)}(\Mot(\hz),\one(q)[p])$$
	and 
	$$\A^{p,q}(k/k)=Hom_{\SH(k/k)}(\hz,\hz^{p,q}) \cong Hom_{\DM(k/k)}(\Mot(\hz),\one(q)[p]).$$
	Recall from \cite[Corollary 3.33]{Vm} and \cite[Theorem 3.2]{HKO} that $\Mot(\hz)$ is a split proper Tate motive. Hence,
	$$\A(k/k) \cong H(k/k) \otimes_{H(k)} \A(k).$$
	Since the morphism $H(k) \rightarrow H(k/k)$ is trivial everywhere but in bidegree $(0)[0]$, we have that
	$$\A(k/k) \cong H(k/k) \otimes_{\F_2} \A_0$$
	which is what we aimed to show.	
\end{proof}

The following proposition gives the whole action of $\A(k/k)$ on $H(k/k)$.

\begin{prop}
	Let $k$ be a flexible field. Then, the action of the Steenrod squares over the isotropic cohomology of the point is described by
	$$Sq^{2^j}r_i=
	\begin{cases}
	r_{i-1}, & i=j \\
	0, & i \neq j
	\end{cases}
	.$$
\end{prop}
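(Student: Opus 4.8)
The plan is to compute $Sq^{2^j}r_i$ by reducing the question to known data about Milnor operations. By Theorem~\ref{vis} the isotropic cohomology $H(k/k)$ is the exterior algebra $\Lambda_{\F_2}(r_i)_{i\geq 0}$ with $Q_j r_i = \delta_{ij}$, so in particular the classes $r_i$ are detected by the Milnor operations, and $r_i$ has bidegree $(-2^i+1)[-2^{i+1}+1]$. First I would pin down the bidegree of $Sq^{2^j}r_i$: the Steenrod square $Sq^{2^j}$ raises topological degree by $2^j$ and weight by $2^{j-1}$ (motivically $Sq^{2m}$ has bidegree $(m)[2m]$), so $Sq^{2^j}r_i$ lands in bidegree $(-2^i+1+2^{j-1})[-2^{i+1}+1+2^j]$. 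A quick inspection of the (very sparse) monomial basis $\{r_E\}$ of $H(k/k)$ shows that for $j \neq i$ this bidegree contains no nonzero class — here one uses that a product $r_{i_1}\cdots r_{i_s}$ has topological degree $\sum(-2^{i_\ell+1}+1)$ and weight $\sum(-2^{i_\ell}+1)$, and a short numerical argument shows the only way to hit the target degrees is $s=1$ and the single index equal to $i-1$, which forces $j=i$ — so $Sq^{2^j}r_i = 0$ whenever $j\neq i$. This is the ``easy'' half, essentially a bookkeeping argument on degrees, exactly of the same flavor as the degree argument already invoked after Theorem~\ref{vis}.

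For the case $j=i$, the target bidegree is exactly $(-2^{i-1}+1)[-2^i+1]$, which by Theorem~\ref{vis} is one-dimensional, spanned by $r_{i-1}$. So $Sq^{2^i}r_i \in \{0, r_{i-1}\}$ and it remains to show it is nonzero. The key tool is the inductive description of the Milnor operations $Q_{i} = Sq^{2^{i}}Q_{i-1} + Q_{i-1}Sq^{2^{i}}$ (the Corollary after Theorem~\ref{mil}), transported to the isotropic setting via Proposition~\ref{st}, which identifies $\A(k/k)$ with $H(k/k)\otimes_{\F_2}\A_0$ as a bimodule and hence lets us compute the action of the $Sq$'s and $Q$'s on $H(k/k)$ compatibly. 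Applying $Q_i$ to $r_i$ gives $1 = Q_i r_i = Sq^{2^i}Q_{i-1}r_i + Q_{i-1}Sq^{2^i}r_i$. Now $Q_{i-1}r_i = 0$ by Theorem~\ref{vis}, so the first term vanishes, leaving $Q_{i-1}(Sq^{2^i}r_i) = 1$. Since $Q_{i-1}r_{i-1} = 1$ and $Q_{i-1}$ annihilates all other basis monomials in that bidegree (of which there are none anyway), this forces $Sq^{2^i}r_i = r_{i-1}$, completing the computation.

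I expect the main obstacle — such as it is — to be organizing the degree bookkeeping in the first half cleanly, i.e. verifying once and for all that outside the single bidegree $(-2^{i-1}+1)[-2^i+1]$ there is genuinely no room for $Sq^{2^j}r_i$ to be nonzero; this is where one must be careful that products of several $r_\ell$'s cannot conspire to land in the relevant degrees. An alternative, perhaps cleaner, route for this half is again to hit $Sq^{2^j}r_i$ with all the Milnor operations $Q_m$: by the bimodule structure of Proposition~\ref{st} and the Adem-type commutation relation, $Q_m(Sq^{2^j}r_i)$ can be rewritten in terms of $Q$'s and lower $Sq$'s applied to $r_i$, all of which are computable from Theorem~\ref{vis}; showing every $Q_m(Sq^{2^j}r_i)$ vanishes for $j\neq i$ then forces $Sq^{2^j}r_i=0$ because the $r_i$'s form a system of ``coordinates'' dual to the $Q_i$'s (an element of $H(k/k)$ killed by all $Q_m$ and of negative degree must be zero, as $H(k/k)$ is a finite exterior algebra detected by the $Q$'s). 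Either way the heart of the argument is the inductive $Q_i$-formula plus the duality $Q_j r_i = \delta_{ij}$; no serious new input is needed.
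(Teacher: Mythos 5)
Your argument is correct and essentially the same as the paper's: for $j=i$ both you and the paper apply $Q_i = Sq^{2^i}Q_{i-1}+Q_{i-1}Sq^{2^i}$ to $r_i$ and use $Q_{i-1}r_i=0$, $Q_ir_i=1$ to force $Q_{i-1}(Sq^{2^i}r_i)=1$; for $j\neq i$ both rely on a degree count in the exterior algebra $H(k/k)$ (the paper splits into $i<j$, where the target bidegree is in the first quadrant, and $i>j$, where the target sits on the line $p=2q-1$ but at a bidegree different from every $r_k$, which matches the ``$s=1$ and the index must be $i-1$'' bookkeeping you describe). The only cosmetic difference is that the paper quotes the commutator identity $[Q_{i-1},Sq^{2^i}]=Q_i$ from Kylling rather than the recursive corollary after Theorem~\ref{mil}, but these are the same relation.
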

\begin{proof}
	Since $\rho=0$, from \cite[Remark 5]{K} we know that
	$$[Q_i,Sq^{2^j}]=
	\begin{cases}
	0, & i \geq j \\
	Q_{i+1}Sq^{2^j-2^{i+1}}, & i < j
	\end{cases}
	.$$
	Note that $Sq^{2^j}r_i$ is in bidegree $(2^{j-1}-2^i+1)[2^j-2^{i+1}+1]$.
	
	If $i < j$, then $Sq^{2^j}r_i=0$ since it belongs to the first quadrant where the isotropic cohomology of the point is trivial.
	
	If $i=j$, then we have that
	$$1=Q_ir_i=Sq^{2^i}Q_{i-1}r_i+Q_{i-1}Sq^{2^i}r_i=Q_{i-1}Sq^{2^i}r_i$$
	from which we deduce that $Sq^{2^i}r_i$ is non-trivial. Hence, $Sq^{2^i}r_i=r_{i-1}$ since it is the only non-zero class in bidegree $(-2^{i-1}+1)[-2^i+1]$.
	
	If $i > j$, then $Sq^{2^j}r_i=0$ since the only classes in bidegrees $(q)[p]$ such that $p=2q-1$ are the $r_k$ and none of them is in the same bidegree of $Sq^{2^j}r_i$. This completes the proof. 
\end{proof}

\section{The isotropic motivic Adams spectral sequence}

Our main aim in this paper is to compute isotropic stable motivic homotopy groups of spheres and relate them to some classical object in stable homotopy theory. We want to achieve this by using an isotropic version of the motivic Adams spectral sequence.
 
First, let us recall how the motivic Adams spectral sequence is constructed (see \cite{A} and \cite{DI}). We start with the standard Adams resolution
	$$
\xymatrix{
	\dots \ar@{->}[r] & (\overline{\hz})^{\wedge i+1} \ar@{->}[r] \ar@{->}[d]	 &(\overline{\hz})^{\wedge i} \ar@{->}[r] \ar@{->}[d] &  \dots \ar@{->}[r]   & \overline{\hz} \ar@{->}[r] \ar@{->}[d]	 & \s \ar@{->}[d]  \\
	& \hz \wedge (\overline{\hz})^{\wedge i+1} \ar@{->}[ul]^{[1]} &	\hz \wedge (\overline{\hz})^{\wedge i} \ar@{->}[ul]^{[1]} & & \hz \wedge \overline{\hz} \ar@{->}[ul]^{[1]}  &	\hz \ar@{->}[ul]^{[1]} 
}
$$ 
where $\s \rightarrow \hz$ is the standard unit map from the sphere spectrum to the Eilenberg-MacLane spectrum, $\overline{\hz}$ is defined as $Cone(\s \rightarrow \hz)[-1]$ and each triangle
$$(\overline{\hz})^{\wedge i+1} \rightarrow (\overline{\hz})^{\wedge i} \rightarrow \hz \wedge (\overline{\hz})^{\wedge i} \rightarrow (\overline{\hz})^{\wedge i+1}[1]$$
is a cofiber sequence in $\SH(k)$. Then, after smashing everything with a motivic spectrum $X$ one gets the following Postnikov system:
	$$
\xymatrix{
	\dots \ar@{->}[r] &  (\overline{\hz})^{\wedge i} \wedge X \ar@{->}[r] \ar@{->}[d] &  \dots \ar@{->}[r]   & \overline{\hz} \wedge X \ar@{->}[r] \ar@{->}[d]	 & X \ar@{->}[d]  \\
 &	\hz \wedge (\overline{\hz})^{\wedge i} \wedge X \ar@{->}[ul]^{[1]} & & \hz \wedge \overline{\hz} \wedge X \ar@{->}[ul]^{[1]}  &	\hz \wedge X. \ar@{->}[ul]^{[1]} 
}
$$
If we apply motivic homotopy groups $\pi_{*,*'}(-)=[\s^{*,*'},-]$ to the previous Postnikov system, we get an unrolled exact couple 
	$$
\xymatrix{
	\dots \ar@{->}[r] &\pi_{*,*'}((\overline{\hz})^{\wedge i}\wedge X) \ar@{->}[r] \ar@{->}[d] &  \dots \ar@{->}[r]   & \pi_{*,*'}(\overline{\hz}\wedge X) \ar@{->}[r] \ar@{->}[d]	 & \pi_{*,*'}(X) \ar@{->}[d]  \\
	&	\pi_{*,*'}(\hz \wedge (\overline{\hz})^{\wedge i}\wedge X) \ar@{->}[ul]^{[1]} & & \pi_{*,*'}(\hz \wedge \overline{\hz}\wedge X) \ar@{->}[ul]^{[1]}  &	\pi_{*,*'}(\hz\wedge X) \ar@{->}[ul]^{[1]} 
}
$$ 
that induces a spectral sequence with $E_1$-page given by
$$E_1^{s,t,u}=\pi_{t-s,u}(\hz \wedge (\overline{\hz})^{\wedge s}\wedge X)$$
and first differential
$$d_1^{s,t,u}:E_1^{s,t,u}=\pi_{t-s,u}(\hz \wedge (\overline{\hz})^{\wedge s}\wedge X) \rightarrow E_1^{s+1,t,u}=\pi_{t-s-1,u}(\hz \wedge (\overline{\hz})^{\wedge s+1}\wedge X).$$
This spectral sequence is called motivic Adams spectral sequence and has differentials on the $E_r$-page given by
$$d_r:E_r^{s,t,u} \rightarrow E_r^{s+r,t+r-1,u}.$$ 
It converges to the stable homotopy groups of a motivic spectrum strongly related to $X$, i.e. its $\hz$-nilpotent completion. Let us recall now how nilpotent completions of motivic spectra are obtained. First, define $\overline{\hz}_n$ as the cone of the morphism $(\overline{\hz})^{\wedge n+1} \rightarrow \s$, i.e. there are distinguished triangles
$$(\overline{\hz})^{\wedge n+1} \rightarrow \s \rightarrow \overline{\hz}_n \rightarrow (\overline{\hz})^{\wedge n+1}[1].$$
Then, by octahedron axiom we get maps $\overline{\hz}_{n+1} \rightarrow \overline{\hz}_n$. This way one gets an inverse system of objects in $\SH(k)$
$$\dots \rightarrow \overline{\hz}_n \rightarrow \dots \rightarrow \overline{\hz}_2 \rightarrow \overline{\hz}_1 \rightarrow \overline{\hz}_0$$
and, after smashing it with a motivic spectrum $X$, one obtains the following inverse system:
$$\dots \rightarrow \overline{\hz}_n \wedge X \rightarrow \dots \rightarrow \overline{\hz}_2 \wedge X \rightarrow \overline{\hz}_1\wedge X \rightarrow \overline{\hz}_0\wedge X.$$
Then, the $\hz$-nilpotent completion of $X$ is defined as the homotopy limit of the previous inverse system, i.e. $X^{\wedge}_{\hz}=\hl \overline{\hz}_n \wedge X$ (see \cite[Proposition 5.5]{Bo}). Moreover, note that, by results of Mantovani (see \cite[Theorems 1.0.1 and 1.0.3]{Ma}), over perfect fields, for connective motivic spectra, $\hz$-nilpotent completions, $\hz$-localizations and $(2,\eta)$-completions coincide.

All in all, the convergence of the motivic Adams spectral sequence is described in the following result.

\begin{thm}
If $\varprojlim_{r}^1 E_r^{s,t,u}=0$ for any $s,t,u$, then the motivic Adams spectral sequence is strongly convergent to the stable motivic homotopy groups of the $\hz$-nilpotent completion of the motivic spectrum $X$.	
\end{thm}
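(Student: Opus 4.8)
The plan is to recognise this spectral sequence as the homotopy spectral sequence of a tower and then invoke the standard convergence theory for such spectral sequences. First I would observe that, by repeated use of the octahedron axiom applied to the Adams resolution, the cofibre of the map $(\overline{\hz})^{\wedge n+1}\wedge X \rightarrow X$ is $\overline{\hz}_n \wedge X$, and the maps $\overline{\hz}_{n+1}\wedge X \rightarrow \overline{\hz}_n \wedge X$ assemble into the tower
$$\dots \rightarrow \overline{\hz}_n \wedge X \rightarrow \dots \rightarrow \overline{\hz}_1 \wedge X \rightarrow \overline{\hz}_0 \wedge X = \hz \wedge X,$$
whose $n$-th layer, i.e.\ the fibre of $\overline{\hz}_n \wedge X \rightarrow \overline{\hz}_{n-1}\wedge X$, is $\hz \wedge (\overline{\hz})^{\wedge n}\wedge X$ (up to a shift). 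Applying the functors $\pi_{*,*'}(-)$ to this tower produces an unrolled exact couple which, from the $E_1$-page onward, is isomorphic — after the evident re-indexing — to the one built from the Adams resolution above. Consequently the motivic Adams spectral sequence is precisely the tower spectral sequence of $\{\overline{\hz}_n \wedge X\}_n$, with the same $E_r$-pages and the same differentials.

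Since $X^{\wedge}_{\hz}=\hl \overline{\hz}_n \wedge X$ holds by the very definition of the $\hz$-nilpotent completion, the tower spectral sequence converges conditionally to $\pi_{*,*'}(X^{\wedge}_{\hz})$; explicitly, the abutment is described by the Milnor exact sequence, which in each tridegree reads
$$0 \rightarrow \varprojlim_n^1 \pi_{t-s+1,u}(\overline{\hz}_n \wedge X) \rightarrow \pi_{t-s,u}(X^{\wedge}_{\hz}) \rightarrow \varprojlim_n \pi_{t-s,u}(\overline{\hz}_n \wedge X) \rightarrow 0.$$
I would then invoke the standard criterion for conditionally convergent spectral sequences (see \cite{Bo}): a conditionally convergent spectral sequence for which $RE_\infty := \varprojlim_r^1 E_r^{s,t,u}$ vanishes in every tridegree converges strongly to its abutment. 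Since the hypothesis of the theorem is exactly $\varprojlim_r^1 E_r^{s,t,u}=0$ for all $s,t,u$, we conclude that the motivic Adams spectral sequence converges strongly to $\pi_{*,*'}(X^{\wedge}_{\hz})$.

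I expect the only substantive work to lie in the first paragraph, namely in checking that the octahedron-axiom manipulations genuinely identify the Adams-resolution exact couple with the tower exact couple, with matching gradings, so that the $\varprojlim_r^1$-hypothesis of the theorem is literally the one the convergence criterion asks for; the re-indexing between the two pictures is the only place a shift could be mishandled. The remaining ingredients are formal: the Milnor sequence relating $\pi_{*,*'}$ of a sequential homotopy limit to $\varprojlim$ and $\varprojlim^1$ is standard, conditional convergence follows immediately from it together with the identity $X^{\wedge}_{\hz}=\hl \overline{\hz}_n \wedge X$, and strong convergence is then a formal consequence of the vanishing of $\varprojlim_r^1 E_r$.
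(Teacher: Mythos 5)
Your proof reconstructs the standard argument that the paper simply cites (Bousfield, Proposition~6.3, and Dugger--Isaksen, Remark~7.11): identifying the Adams spectral sequence with the tower spectral sequence of $\{\overline{\hz}_n \wedge X\}_n$, establishing conditional convergence to $\pi_{*,*'}(X^{\wedge}_{\hz})$ via the Milnor sequence, and invoking the vanishing of $\varprojlim_r^1 E_r$ to upgrade to strong convergence. The approach is the same; you have just unpacked the citations.
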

\begin{proof}
See \cite[Proposition 6.3]{Bo} and \cite[Remark 6.11]{DI}. 
\end{proof}

The condition on the triviality of $\varprojlim_{r}^1 E_r^{s,t,u}$ is satisfied if $\{E_r^{s,t,u}\}$ is Mittag-Leffler, i.e. for each $s,t,u$ there exists $r_0$ such that $E_r^{s,t,u} \cong E_{\infty}^{s,t,u}$ whenever $r > r_0$ (see \cite[after Proposition 6.3]{Bo}).

Now, let us focus our attention on the motivic Adams spectral sequence for the sphere spectrum. In this case, the $E_1$-page can be described in the following way:
$$E_1^{s,t,u}=\pi_{t-s,u}(\hz \wedge (\overline{\hz})^{\wedge s}) \cong Hom_{\A(k)}^{t-s,u}(H(\hz \wedge (\overline{\hz})^{\wedge s}),H(k))$$
$$ \cong Hom_{\A(k)}^{t,u}(\A(k) \otimes_{H(k)} \overline{\A(k)}^{\otimes s},H(k)).$$
It is worth noticing that the previous chain of equivalences is due to the fact that $\hz \wedge \hz$ is equivalent to a motivically finite type wedge of $\hz$ (see \cite[Definition 2.11, Definition 2.12 and Lemma 6.3]{DI}), from which it follows by an easy induction argument that all spectra $\hz \wedge (\overline{\hz})^{\wedge s}$ are motivically finite type wedges of $\hz$. Then, the cohomology rings $H(\hz \wedge (\overline{\hz})^{\wedge s})$ are identified with the $\A(k)$-modules $A(k) \otimes_{H(k)} \overline{\A(k)}^{\otimes s}$. In practice, the $E_1$-page of the motivic Adams spectral sequence can be obtained as follows. First, one chooses a free $\A(k)$-resolution of $H(k)$
$$0 \leftarrow H(k) \leftarrow \A(k) \leftarrow \A(k) \otimes_{H(k)} \overline{\A(k)} \leftarrow \dots \leftarrow \A(k) \otimes_{H(k)} \overline{\A(k)}^{\otimes s} \leftarrow \dots \: .$$
Then, applying the functor $Hom_{\A(k)}(-,H(k))$, one obtains the complex
$$Hom_{\A(k)}(\A(k),H(k)) \rightarrow Hom_{\A(k)}(\A(k) \otimes_{H(k)} \overline{\A(k)},H(k)) \rightarrow \dots$$ 
$$\rightarrow Hom_{\A(k)}(\A(k) \otimes_{H(k)} \overline{\A(k)}^{\otimes s},H(k)) \rightarrow \dots$$
whose terms constitute the $E_1$-page and whose morphisms are the differentials $d_1$ of the spectral sequence. The homology of this complex is essentially the $E_2$-page of the spectral sequence, i.e. $Ext_{\A(k)}(H(k),H(k))$. Hence, one has the following result.

\begin{thm}
	Let $k$ be an algebraically closed field. Then, the motivic Adams spectral sequence for the sphere spectrum is strongly convergent to the stable motivic homotopy groups of the $\hz$-nilpotent completion of the sphere spectrum $\s^{\wedge}_{\hz}$ and has $E_2$-page described by
	$$E_2^{s,t,u}=Ext^{s,t,u}_{\A(k)}(H(k),H(k)) \Longrightarrow \pi_{t-s,u}(\s^{\wedge}_{\hz})$$
	where $s$ is the homological degree and $(t,u)$ is the internal bidegree ($t$ is the topological degree and $u$ is the weight).
\end{thm}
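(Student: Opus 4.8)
The plan is to read the statement off from the general convergence theorem above, once we have the explicit description of the $E_1$- and $E_2$-pages that was sketched just before it; the hypothesis that $k$ is algebraically closed is needed only to secure convergence. First I would pin down the $E_1$-page together with its $d_1$-differential. Since $\hz \wedge \hz$ is a motivically finite type wedge of suspensions of $\hz$ by \cite[Lemma 7.3]{DI}, an induction on $s$ using the defining cofiber sequences $(\overline{\hz})^{\wedge s+1} \rightarrow (\overline{\hz})^{\wedge s} \rightarrow \hz \wedge (\overline{\hz})^{\wedge s} \rightarrow (\overline{\hz})^{\wedge s+1}[1]$ shows that every $\hz \wedge (\overline{\hz})^{\wedge s}$ is again a motivically finite type wedge of suspensions of $\hz$, whose motivic cohomology is the free $\A(k)$-module $\A(k) \otimes_{H(k)} \overline{\A(k)}^{\otimes s}$, where $\overline{\A(k)}$ is the augmentation ideal of $\A(k)$. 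Hence
$$E_1^{s,t,u} = \pi_{t-s,u}(\hz \wedge (\overline{\hz})^{\wedge s}) \cong Hom^{t,u}_{\A(k)}(\A(k) \otimes_{H(k)} \overline{\A(k)}^{\otimes s}, H(k)),$$
and under this identification $d_1$ is the differential obtained by applying $Hom_{\A(k)}(-,H(k))$ to the canonical free resolution
$$0 \leftarrow H(k) \leftarrow \A(k) \leftarrow \A(k) \otimes_{H(k)} \overline{\A(k)} \leftarrow \dots \leftarrow \A(k) \otimes_{H(k)} \overline{\A(k)}^{\otimes s} \leftarrow \dots$$
of $H(k)$, so taking cohomology gives $E_2^{s,t,u} = Ext^{s,t,u}_{\A(k)}(H(k),H(k))$. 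This part in fact works over any field of characteristic $\neq 2$.

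For convergence, the general theorem above reduces us to checking $\varprojlim_{r}^1 E_r^{s,t,u} = 0$ for all $(s,t,u)$, and by the remark following it this holds once the tower $\{E_r^{s,t,u}\}_r$ is Mittag--Leffler. Fix a tridegree $(s,t,u)$. A differential $d_r$ hitting it originates from $E_r^{s-r,\,t-r+1,\,u}$, which has negative Adams filtration once $r > s$ and hence vanishes, so only finitely many differentials can land in $(s,t,u)$. A differential $d_r$ out of $(s,t,u)$ lands in $E_r^{s+r,\,t+r-1,\,u}$, which lies in the single stem $t-s-1$ and weight $u$; here algebraic closedness enters, through the vanishing regions for $Ext_{\A(k)}(H(k),H(k))$ determined by Dugger and Isaksen over $\mathbb{C}$ in \cite{DI} (and valid over any algebraically closed field of characteristic $\neq 2$, since the motivic Steenrod algebra and its cohomology do not depend on the choice of such a base field): in a fixed stem and weight there is a vanishing line, so outside stem $0$ these target groups are $0$ for $r$ large, while in stem $0$ the surviving $h_0$-towers cannot receive a nonzero differential by connectivity of the sphere spectrum. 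In every case $E_r^{s,t,u}$ is eventually constant in $r$, so the tower is Mittag--Leffler, $\varprojlim^1$ vanishes, and the spectral sequence converges strongly to $\pi_{t-s,u}(\s^{\wedge}_{\hz})$.

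The main obstacle is precisely this convergence step: one must exclude a single tridegree being the target of infinitely many nonzero differentials, and because the $h_0$-tower in stem $0$ prevents a global vanishing line, this cannot be deduced from one inequality but requires the stratified degree argument above together with the Dugger--Isaksen description of the vanishing cone of the motivic $Ext$-groups. Everything else — the Adams resolution, the exact couple, and the identification of $E_2$ with $Ext$ over the motivic Steenrod algebra — is a direct transcription of the classical machinery recalled above.
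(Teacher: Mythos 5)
The paper's own proof of this theorem consists of a single citation to Dugger--Isaksen \cite[Corollary 7.15]{DI}, so you are supplying a much more detailed argument than the paper gives; the identification of the $E_1$- and $E_2$-pages you carry out is the same as the exposition preceding the theorem and is correct.

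The convergence step, however, contains a genuine gap. To handle differentials landing in the $h_0$-tower you invoke ``connectivity of the sphere spectrum,'' but this is circular: the assertion that the $h_0$-tower survives to $E_\infty$ (equivalently, that $\pi_{0,0}$ of the completed sphere is $\mathbb{Z}_2$) is itself a consequence of the convergence you are trying to establish. More fundamentally, the Mittag--Leffler condition at a fixed tridegree $(s,t,u)$ does not require the \emph{targets} $E_r^{s+r,t+r-1,u}$ to eventually vanish, only that the tower $E_r^{s,t,u}$ itself stabilizes. Once $r>s$ there are no incoming differentials, so $\{E_r^{s,t,u}\}_{r>s}$ is a decreasing chain of subgroups; and over an algebraically closed field one has $H(k)\cong \F_2[\tau]$, so each $E_1^{s,t,u}\cong Hom^{t,u}_{H(k)}(\overline{\A(k)}^{\otimes s},H(k))$ is a finite-dimensional $\F_2$-vector space (using that $\hz\wedge(\overline{\hz})^{\wedge s}$ is a motivically finite type wedge). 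A decreasing chain in a finite group stabilizes, so Mittag--Leffler holds without any reference to vanishing lines or to what survives in stem~$0$. (Dugger and Isaksen's own argument proceeds via a vanishing line for motivic $Ext$, which also works, but your formulation of it is where the circularity enters.) Replacing your stem-$0$ discussion with this finiteness observation repairs the proof; everything else you wrote is sound.
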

\begin{proof}
	See \cite[Corollary 6.15]{DI}. 
\end{proof}

Strong convergence of the motivic Adams spectral sequence for the sphere spectrum is known also over fields with finite virtual cohomological dimension (see \cite{KW}).

We can project the standard Adams resolution to the isotropic category $\SH(k/k)$. This way, we get an isotropic motivic Adams spectral sequence with $E_1$-page given by
$$E_1^{s,t,u}=\pi_{t-s,u}^{iso}(\hz \wedge (\overline{\hz})^{\wedge s} \wedge X)$$
and first differential
$$d_1^{s,t,u}:\pi_{t-s,u}^{iso}(\hz \wedge (\overline{\hz})^{\wedge s} \wedge X) \rightarrow \pi_{t-s-1,u}^{iso}(\hz \wedge (\overline{\hz})^{\wedge s+1} \wedge X)$$
where $\pi^{iso}_{*,*'}$ are the isotropic stable motivic homotopy groups functors, namely
$$\pi^{iso}_{*,*'}(X)=Hom_{\SH(k/k)}(\s^{*,*'},X)=[\X \wedge \s^{*,*'},\X \wedge X] \cong [\s^{*,*'},\X \wedge X]=\pi_{*,*'}(\X \wedge X)$$
for any spectrum $X$, where the isomorphism in the previous chain of equivalences comes from Proposition \ref{loc}. Also in this case, differentials on the $E_r$-page have tri-degrees given by
$$d_r:E_r^{s,t,u} \rightarrow E_r^{s+r,t+r-1,u}.$$
In fact, note that the isotropic motivic Adams spectral sequence for the motivic spectrum $X$ is nothing else but the usual motivic Adams spectral sequence for the motivic spectrum $\X \wedge X$. If we focus on the sphere spectrum, then we have an $E_1$-page given by
$$E_1^{s,t,u}=\pi^{iso}_{t-s,u}(\hz \wedge (\overline{\hz})^{\wedge s}).$$
We would like to get a description for this $E_1$-page similar to the one of the motivic Adams spectral sequence, in order to get a handy $E_2$-page expressible algebraically in terms of $Ext$-groups over the isotropic motivic Steenrod algebra. Indeed, we have the following situation. Let $V$ be the graded ${\mathbb F}_2$-vector space defined as $\bigoplus_{\beta} \F_2^{-p_{\beta},-q_{\beta}}$ where $\{(p_{\beta,}q_{\beta})\}_{\beta}$ is a motivically finite type set of bidegrees, i.e. for any $\alpha$ there are finitely many $\beta$ such that $p_{\alpha} \geq p_{\beta}$ and $2q_{\alpha} -p_{\alpha} \geq 2q_{\beta} -p_{\beta}$. Moreover, denote by $\hz(V)$ the generalised Eilenberg-MacLane spectrum $\bigvee_{\beta} \hz^{p_{\beta},q_{\beta}}$. Then, one has the following commutative diagram:
$$
\xymatrix{
	\pi_{*,*'}^{iso}(\hz(V)) \ar@{->}[r] \ar@{->}[d]&  Hom^{*,*'}_{\A(k/k)}(H_{iso}(\hz(V)),H(k/k)) \ar@{->}[d]  \\
	Hom^{*,*'}_{\F_2}(V,H(k/k)) \ar@{->}[r] & Hom^{*,*'}_{\A(k/k)}(\A(k/k) \otimes_{\F_2} V,H(k/k))
}
$$
where by $H_{iso}(X)$ we mean the isotropic motivic cohomology of $X$, i.e. $H_{iso}^{p,q}(X)=[\X \wedge X, \X \wedge \hz^{p,q}]$.

The left vertical arrow is the isomorphism described by the following chain of equivalences:
$$\pi_{*,*'}^{iso}(\hz(V)) \cong [\s^{*,*'},\X \wedge \bigvee_{\beta} \hz^{p_{\beta},q_{\beta}}] \cong [\s^{*,*'},\bigvee_{\beta} \X \wedge \hz^{p_{\beta},q_{\beta}}] \cong \bigoplus_{\beta}[\s^{*,*'},\X \wedge \hz^{p_{\beta},q_{\beta}}]\cong$$ $$\bigoplus_{\beta}H(k/k)^{p_{\beta}-*,q_{\beta}-*'} \cong \prod_{\beta}H(k/k)^{p_{\beta}-*,q_{\beta}-*'} \cong \prod_{\beta} Hom^{*,*'}_{\F_2}(\F_2^{-p_{\beta},-q_{\beta}},H(k/k)) \cong$$
$$Hom^{*,*'}_{\F_2}(\bigoplus_{\beta} \F_2^{-p_{\beta},-q_{\beta}},H(k/k))=Hom^{*,*'}_{\F_2}(V,H(k/k))$$
where the isomorphism $\bigoplus_{\beta}H(k/k)^{p_{\beta}-*,q_{\beta}-*'} \cong \prod_{\beta}H(k/k)^{p_{\beta}-*,q_{\beta}-*'}$ follows from the fact that for any bidegree $(*')[*]$ only for a finite number of $\beta$ the group $ H(k/k)^{p_{\beta}-*,q_{\beta}-*'}$ is non-trivial.
The bottom horizontal map is obviously an isomorphism since $\A(k/k)$ is an ${\mathbb F}_2$-vector space. The right vertical map is an isomorphism since
$$Hom^{*,*'}_{\A(k/k)}(H_{iso}(\hz(V)),H(k/k))=Hom^{*,*'}_{\A(k/k)}(H_{iso}(\bigvee_{\beta} \hz^{p_{\beta},q_{\beta}}),H(k/k)) \cong$$
$$Hom^{*,*'}_{\A(k/k)}(\prod_{\beta}H_{iso}(\hz^{p_{\beta},q_{\beta}}),H(k/k)) = Hom^{*,*'}_{\A(k/k)}(\prod_{\beta} \A(k/k)^{-p_{\beta},-q_{\beta}},H(k/k)) \cong$$
$$ Hom^{*,*'}_{\A(k/k)}(\bigoplus_{\beta} \A(k/k)^{-p_{\beta},-q_{\beta}},H(k/k)) \cong Hom^{*,*'}_{\A(k/k)}(\A(k/k) \otimes_{\F_2} V,H(k/k)).$$
In the previous chain of equivalences the identification between $Hom^{*,*'}_{\A(k/k)}(\prod_{\beta} \A(k/k)^{-p_{\beta},-q_{\beta}},H(k/k))$ and $ Hom^{*,*'}_{\A(k/k)}(\bigoplus_{\beta} \A(k/k)^{-p_{\beta},-q_{\beta}},H(k/k))$ comes, as before, from the fact that for any bidegree $(*')[*]$ only for a finite number of $\beta$ the group $Hom^{*,*'}_{\A(k/k)}( \A(k/k)^{-p_{\beta},-q_{\beta}},H(k/k))\cong H(k/k)^{p_{\beta}-*,q_{\beta}-*'}$ is non-trivial.

It follows from the previous considerations and by recalling that $\hz \wedge (\overline{\hz})^{\wedge s}$ are motivically finite type wedges of $\hz$ that the $E_1$-page of the isotropic Adams spectral sequence can be identified in the following way: 
$$E_1^{s,t,u} \cong Hom_{\A(k/k)}^{t-s,u}(H_{iso}(\hz \wedge (\overline{\hz})^{\wedge s}),H(k/k)) \cong Hom_{\A(k/k)}^{t,u}(\A(k/k) \otimes_{H(k/k)} \overline{\A(k/k)}^{\otimes s},H(k/k)).$$
In other words, the $E_1$-page of the isotropic motivic Adams spectral sequence can be obtained as follows. First, one chooses a free $\A(k/k)$-resolution of $H(k/k)$
$$0 \leftarrow H(k/k) \leftarrow \A(k/k) \leftarrow \A(k/k) \otimes_{H(k/k)} \overline{\A(k/k)} \leftarrow \dots \leftarrow \A(k/k) \otimes_{H(k/k)} \overline{\A(k/k)}^{\otimes s} \leftarrow \dots \: .$$
Then, applying the functor $Hom_{\A(k/k)}(-,H(k/k))$, one obtains the complex
$$Hom_{\A(k/k)}(\A(k/k),H(k/k)) \rightarrow Hom_{\A(k/k)}(\A(k/k) \otimes_{H(k/k)} \overline{\A(k/k)},H(k/k)) \rightarrow \dots$$ 
$$\rightarrow Hom_{\A(k/k)}(\A(k/k) \otimes_{H(k/k)} \overline{\A(k/k)}^{\otimes s},H(k/k)) \rightarrow \dots$$
whose terms constitute the $E_1$-page and whose morphisms are the differentials $d_1$ of the spectral sequence. The homology of this complex is essentially the $E_2$-page of the isotropic Adams spectral sequence, i.e. $Ext_{\A(k/k)}(H(k/k),H(k/k))$. Therefore, we have the following result on the convergence of the isotropic Adams spectral sequence for the sphere spectrum.

\begin{thm}\label{lab}
	If $\varprojlim_{r}^1 E_r^{s,t,u}=0$ for any $s,t,u$, then the isotropic motivic Adams spectral sequence for the sphere spectrum is strongly convergent to the stable motivic homotopy groups of the $\hz$-nilpotent completion of the isotropic sphere spectrum $\X^{\wedge}_{\hz}$ and has $E_2$-page described by
	$$E_2^{s,t,u}=Ext^{s,t,u}_{\A(k/k)}(H(k/k),H(k/k)) \Longrightarrow \pi_{t-s,u}(\X^{\wedge}_{\hz}).$$
\end{thm}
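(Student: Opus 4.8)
The plan is to deduce everything from the general convergence result stated above, applied to the single motivic spectrum $\X$, together with the $E_1$- and $E_2$-page bookkeeping already carried out in the preceding discussion. The first step is to recall that the isotropic motivic Adams spectral sequence for the sphere spectrum $\s$ is nothing but the ordinary motivic Adams spectral sequence for the motivic spectrum $\X \wedge \s = \X$: smashing the standard Adams resolution of $\s$ with $\X$ yields the standard Adams resolution of $\X$, and applying $\pi^{iso}_{*,*'} = \pi_{*,*'}(\X \wedge -)$ (using Proposition \ref{loc}) to the associated Postnikov system produces exactly the exact couple defining the motivic Adams spectral sequence of $\X$. Hence the general convergence theorem, with $X = \X$, gives at once that, under the hypothesis $\varprojlim_r^1 E_r^{s,t,u} = 0$, the spectral sequence converges strongly to $\pi_{t-s,u}$ of the $\hz$-nilpotent completion $\X^{\wedge}_{\hz}$. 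One may also note that each $\overline{\hz}_n \wedge \X$ lies in $\SH(k/k)$, since $\X \wedge \X \cong \X$, so by Proposition \ref{hl} the homotopy limit $\X^{\wedge}_{\hz}$ lies in $\SH(k/k)$ as well and its motivic homotopy groups coincide with its isotropic ones.

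The second step is the identification of the $E_2$-page. The preceding discussion identified the $E_1$-page with the cobar complex
$$E_1^{s,t,u} \cong Hom^{t,u}_{\A(k/k)}\bigl(\A(k/k) \otimes_{H(k/k)} \overline{\A(k/k)}^{\otimes s}, H(k/k)\bigr)$$
obtained by applying $Hom_{\A(k/k)}(-, H(k/k))$ to the reduced bar construction on $\A(k/k)$ over $H(k/k)$, with $d_1$ the cobar differential. To conclude that its cohomology is $Ext_{\A(k/k)}(H(k/k), H(k/k))$ I would check that this bar construction is a genuine free $\A(k/k)$-resolution of $H(k/k)$: exactness is the standard contracting-homotopy argument, while freeness of the terms follows from Proposition \ref{st}, since $\A(k/k) \cong H(k/k) \otimes_{\F_2} \A_0$ and correspondingly $\overline{\A(k/k)} \cong H(k/k) \otimes_{\F_2} \overline{\A_0}$ are free $H(k/k)$-modules, so that each $\A(k/k) \otimes_{H(k/k)} \overline{\A(k/k)}^{\otimes s}$ is free as a left $\A(k/k)$-module. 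Combining the two steps yields the displayed spectral sequence.

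The genuinely delicate point is not this final assembly, which is essentially bookkeeping, but the $E_1$-identification already established above: one must know that the smash powers $\hz \wedge \overline{\hz}^{\wedge s}$ stay motivically-finite-type wedges of $\hz$ after passing to the isotropic category, and that $\pi^{iso}_{*,*'}$ of such a wedge is computed via the commutative square relating it to $Hom_{\A(k/k)}(-, H(k/k))$. The subtlety there is the interchange of the infinite wedge with products and with homotopy groups, which is exactly why the motivically-finite-type hypothesis is imposed: it guarantees that in each fixed bidegree only finitely many summands of $H(k/k)$ are nonzero, so that $\bigoplus$ and $\prod$ agree degreewise. Granting that input, the theorem is immediate from the two steps above.
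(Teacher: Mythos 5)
Your proof is correct and follows essentially the same route as the paper: identify the isotropic Adams spectral sequence for $\s$ with the ordinary motivic Adams spectral sequence for $\X$, apply the general convergence theorem, and use Proposition \ref{hl} to see that $\X^{\wedge}_{\hz}$ lies in $\SH(k/k)$ so that its ordinary and isotropic homotopy groups agree. Your added remarks on the $E_1$/$E_2$ bookkeeping match the discussion the paper carries out immediately before the theorem, so there is no substantive difference.
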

\begin{proof}
Since the isotropic motivic Adams spectral sequence for the sphere spectrum is nothing else but the motivic Adams spectral sequence for the spectrum $\X$, it converges to $[\X \wedge \s^{*,*'},\X^{\wedge}_{\hz}]$. By definition, $\X^{\wedge}_{\hz}=\hl \X \wedge \overline{\hz}_n$. Hence, by Proposition \ref{hl} we have that $\X^{\wedge}_{\hz}$ belongs to $\SH(k/k)$. It immediately follows that the isotropic Adams spectral sequence converges to 
$$[\X \wedge \s^{*,*'},\X^{\wedge}_{\hz}] \cong [\s^{*,*'},\X^{\wedge}_{\hz}]=\pi_{*,*'}(\X^{\wedge}_{\hz})$$
which is what we aimed to show. 
\end{proof}

\section{Isotropic homotopy groups of the sphere spectrum}

The last theorem in the previous section suggests us that, in order to get some structural results on the isotropic homotopy groups of spheres, we need to describe the $E_2$-page of the isotropic Adams spectral sequence in a more familiar way. This is exactly the purpose of this section and we will achieve it by studying the algebraic properties of the isotropic Steenrod algebra, its Milnor subalgebra and some modules over it.

First, recall that, by Proposition \ref{2si}, $\G$ is the quotient ring of $\A_0$ modulo its two-sided ideal generated by all Milnor operations. Hence, we have a tautological functor from the category of left $\G$-modules to the category of left $\A_0$-modules which is obviously exact and fully faithful. Then, we can compose this functor with the functor sending a left $\A_0$-module $N$ to the left $\A(k/k)$-module $\A(k/k) \otimes_{\A_0} N$. This functor is exact since $\A(k/k)$ is free over $\A_0$. All in all, we obtain an exact functor from the category of left $\G$-modules to the category of left $\A(k/k)$-modules
$$N \in \G-{\mathrm Mod} \mapsto \A(k/k) \otimes_{\A_0} N \in \A(k/k)-{\mathrm Mod}$$
which induces homomorphisms between $Ext$-modules
$$Ext^i_{\G}(N,N') \rightarrow Ext^i_{\A(k/k)}(\A(k/k) \otimes_{\A_0} N,\A(k/k) \otimes_{\A_0} N')$$
commuting with Yoneda product (and higher products). We want to show that, under mild restrictions on $N'$, these homomorphisms are isomorphisms. In order to proceed we need the following well known Baer's criterion for graded rings.

\begin{prop}
	(Baer's criterion) Let $R$ be a graded ring and let $M$ be a graded left $R$-module. Then, $M$ is injective if and only if, for any homogeneous left ideal $I$ of $R$ and any graded morphism $\phi:I \rightarrow M$ of left $R$-modules, there exists a graded morhism $\psi:R \rightarrow M$ extending $\phi$.
\end{prop}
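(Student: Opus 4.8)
The plan is to mimic the classical proof of Baer's criterion, keeping track of homogeneity throughout. The forward implication is immediate: if $M$ is injective in the category of graded left $R$-modules, then applying the definition of injectivity to the graded monomorphism $I \hookrightarrow R$ shows that any graded $\phi : I \to M$ extends to a graded $\psi : R \to M$.

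For the converse, assume the extension property holds for all homogeneous left ideals of $R$. Since every graded monomorphism is isomorphic (as an arrow of graded modules) to the inclusion of a graded submodule, it suffices to extend an arbitrary graded morphism $g : A \to M$ along an inclusion $A \subseteq B$ of graded left $R$-modules. I would run Zorn's lemma on the poset of pairs $(C,h)$, where $A \subseteq C \subseteq B$ is a graded submodule and $h : C \to M$ is a graded morphism restricting to $g$ on $A$, ordered by extension; a chain has an upper bound because a directed union of graded submodules is graded and the induced map is a well-defined graded morphism. Fix a maximal pair $(C,h)$.

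The heart of the argument is to show $C = B$. If not, pick a homogeneous $b \in B \setminus C$, say of degree $d$, and set $I = \{\, r \in R \mid rb \in C \,\}$. Because $b$ is homogeneous and $C$ is graded, $I$ is a homogeneous left ideal: writing $r = \sum r_i$ in homogeneous components, $rb = \sum r_i b$ with $r_i b$ homogeneous of degree $(\deg r_i) + d$, so $C$ graded forces each $r_i b \in C$, i.e. each $r_i \in I$. Define $\phi : I \to M$ by $\phi(r) = h(rb)$; this is a graded morphism of left $R$-modules once the degree shift by $d$ is accounted for, i.e. it is degree-preserving as a map $I \to M(d)$, where $M(d)$ is the shift of $M$, which satisfies the same extension hypothesis since shifting is an equivalence of the category of graded modules. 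By hypothesis, $\phi$ extends to a graded $\psi : R \to M$. I would then define $h' : C + Rb \to M$ by $h'(c + rb) = h(c) + \psi(r)$: it is well defined because if $c + rb = c' + r'b$ then $(r - r')b = c' - c \in C$, so $r - r' \in I$ and $\psi(r - r') = \phi(r - r') = h((r - r')b) = h(c' - c)$; it is graded because $Rb$ is a graded submodule ($b$ being homogeneous) and hence so is $C + Rb$; and it properly extends $h$. This contradicts maximality of $(C,h)$, so $C = B$, completing the proof.

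The only genuinely delicate point is the degree-shift bookkeeping when invoking the extension hypothesis for $\phi(r) = h(rb)$: one must either read the statement of the criterion as permitting graded morphisms of arbitrary degree, or, as above, pass to the shifted module $M(d)$ and use shift-invariance of the hypothesis. Everything else amounts to routine verification that the submodules and maps produced along the way remain homogeneous.
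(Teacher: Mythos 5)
The paper does not prove this proposition; it only cites N\v{a}st\v{a}sescu--Van Oystaeyen, so you are supplying an argument where the paper gives none. Your proof follows the classical Baer argument and its skeleton is sound, but the handling of the degree shift---which you correctly flag as the one delicate point---is where the proof as written has a gap. You offer two fixes: (a) read ``graded morphism'' as a morphism of arbitrary degree (an element of the internal graded $\mathrm{HOM}_R(I,M)=\bigoplus_d \mathrm{Hom}_{R\text{-gr}}(I,M(d))$), or (b) pass to the shift $M(d)$ and invoke ``shift-invariance of the hypothesis, since shifting is an equivalence''. Option (b) is not a valid argument on its own: the shift equivalence does \emph{not} carry the Baer condition from $M$ to $M(d)$, because it sends the ideal $I\subseteq R$ to the graded submodule $I(d)\subseteq R(d)$, which is not a left ideal of $R$, so the hypothesis on $M$ simply does not speak to the maps $I\to M(d)$ you need to extend. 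Indeed, with ``graded morphism'' read as ``degree-$0$ morphism'' the proposition is false: take $R=k[x]$ with $\deg x=1$ and $M=k$ concentrated in degree $0$ with $x$ acting trivially. Then $M$ satisfies the degree-$0$ Baer condition vacuously (for $n\geq 1$ the only degree-$0$ morphism $(x^n)\to M$ is zero, for degree reasons), yet $M$ is not gr-injective, since the isomorphism $M\cong\bigl((x)/(x^2)\bigr)(1)\to M$ does not extend along the inclusion $\bigl((x)/(x^2)\bigr)(1)\hookrightarrow\bigl(R/(x^2)\bigr)(1)$: any extension would have to kill the degree-$0$ generator $\overline{x}=x\cdot\overline{1}$, because $\overline{1}$ lies in degree $-1$ where $M$ vanishes. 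So reading (a) is the correct and only one; it is also the reading the paper implicitly uses in the proof of Proposition \ref{inj}, where the constructed extension $\psi$ has the same, generally nonzero, degree as the given $\phi$. Under reading (a), shift-invariance is automatic and your option (b) collapses into (a). You should therefore state the criterion with $\mathrm{HOM}$ (equivalently, for all shifts $R(d)$ of $R$) and carry that convention through the Zorn step, rather than offering (a) and (b) as independent alternatives; with that adjustment your argument is complete.
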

\begin{proof}
	See \cite[Corollary 2.4.8]{N}. 
\end{proof}

Before moving on to the main results, we first need some terminology. For any left $\G$-module $N$ and any integer $i$ let $N \langle i \rangle $ be the left $\G$-submodule of $N$ whose elements are all the elements of $N$ lying on the slope $2$ line of the equation $p=2q+i$, where $p$ and $q$ represent respectively the usual topological degree and weight. Note that $N$ is isomorphic to $\bigoplus_{i \in \Z} N \langle i \rangle $ as left $\G$-modules. We say that $N$ is bounded above if there exists a $p'$ such that $N^{p,q} \cong 0$ for all $p \geq p'$. Clearly, if $N$ is bounded above all $N \langle i \rangle$ are bounded above as well.

We are now ready to prove that the left $\A(k/k)$-modules that are images of bounded above left $\G$-modules via the above mentioned functor are injective as left $\M$-modules.

\begin{prop}\label{inj}
	Let $k$ be a flexible field. Then, for any bounded above left $\G$-module $N$, $\A(k/k) \otimes_{\A_0}N$ is an injective left $\M$-module.
\end{prop}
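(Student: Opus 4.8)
The plan is to use Baer's criterion together with the structural facts established so far, most notably the decomposition $\A(k/k) \cong H(k/k) \otimes_{\F_2} \A_0$ from Proposition \ref{st}, the splitting $\A_0 \cong \M \otimes_{\F_2} \G$ from Proposition \ref{2si}, and Vishik's computation $H(k/k) \cong \Lambda_{\F_2}(r_i)_{i \geq 0}$ from Theorem \ref{vis}. Combining these, $\A(k/k) \otimes_{\A_0} N \cong H(k/k) \otimes_{\F_2} \G \otimes_{\F_2} N$ as a left $\M$-module, where $\M$ acts only on the $H(k/k) \otimes_{\F_2} \G$ factor via the commutator formula \ref{eq} (the $\G$-action of $\A_0$ commutes past $\M$, and $\M$ acts trivially on $N$ since $N$ is a $\G$-module). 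So it suffices to show that $H(k/k) \otimes_{\F_2} \G \otimes_{\F_2} W$ is an injective left $\M$-module for any graded $\F_2$-vector space $W$ such that the relevant degree-finiteness holds; the hypothesis that $N$ is bounded above is what guarantees this finiteness after we split along slope-$2$ lines $N \cong \bigoplus_i N\langle i \rangle$ and observe $\G \langle j \rangle$ is concentrated on the line $p = 2q$.

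The key computation is therefore: $\M \otimes_{\F_2} H(k/k)$, with $\M$ acting on itself on the left and on $H(k/k)$ via the Milnor operations $Q_j r_i = \delta_{ij}$, is an injective (equivalently, since $\M$ is an exterior algebra on the $Q_j$, a free) left $\M$-module. Here I would argue as follows. Since $\M = \Lambda_{\F_2}(Q_i)_{i \geq 0}$ is a (graded) exterior algebra, it is self-injective, and a graded $\M$-module is injective iff it is free; so I want to exhibit $\M \otimes_{\F_2} H(k/k)$ as free over $\M$. The point is that the twisted diagonal action on $\M \otimes_{\F_2} H(k/k)$ — where $Q_j$ acts by $Q_j \otimes 1 + (\text{lower } Q\text{'s, or }1) \otimes Q_j$ according to \ref{eq} and the action on $H(k/k)$ — is, up to a triangular change of basis, isomorphic to the free action $\M \otimes_{\F_2} H(k/k)$ with $\M$ acting only on the first factor. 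Concretely, one filters by the number of $r_i$'s, observes that on the associated graded the action becomes the untwisted free one, and lifts a basis; the hypothesis $N$ bounded above (hence each $N\langle i \rangle$, and thus each relevant piece of $H(k/k) \otimes \G \otimes N$, bounded above) ensures all these filtrations are finite in each bidegree so no completion issues arise. Then to verify Baer's criterion: given a homogeneous left ideal $I \subseteq \M$ and a graded map $\phi : I \to H(k/k) \otimes_{\F_2} \G \otimes_{\F_2} N$, we extend it to $\M$ using injectivity (= freeness) of the target, which we have just established.

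The main obstacle I anticipate is the degree-finiteness bookkeeping needed to make "injective" well-behaved in the graded bigraded setting and to legitimately reduce from $\A(k/k)\otimes_{\A_0}N$ to the cleaner module $\M \otimes_{\F_2} H(k/k) \otimes_{\F_2}(\G \otimes_{\F_2} N)$: one must check that in each bidegree only finitely many summands contribute, which is exactly where the bounded-above hypothesis on $N$ (propagated through $N\langle i\rangle$ and the fact that $\G$ sits on the slope-$2$ line while the $r_i$ have bidegrees $(-2^i+1)[-2^{i+1}+1]$) is used. A secondary subtlety is the twisted $\M$-action from formula \ref{eq}: a priori $Q_j$ does not act as a simple tensor-factor derivation, so one must genuinely run the filtration/triangularity argument rather than quote a product formula; but since the correction terms strictly decrease a well-chosen weight-type grading, the argument is routine once set up. I would carry the steps out in the order: (1) identify $\A(k/k)\otimes_{\A_0}N$ as a left $\M$-module via Propositions \ref{st}, \ref{2si}; (2) reduce, using boundedness and the slope decomposition, to showing $\M \otimes_{\F_2} H(k/k)$ is free over $\M$; (3) prove freeness by the triangularity argument on the filtration by $r$-degree; (4) conclude injectivity via Baer's criterion.
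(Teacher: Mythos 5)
Your general framework (Baer's criterion plus the structural facts from Propositions \ref{st} and \ref{2si}) is the same as the paper's, but the reduction steps contain errors that derail the argument and you never reach the decisive construction.

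\textbf{The identification of the module is wrong.} From Proposition \ref{st}, $\A(k/k) \cong H(k/k) \otimes_{\F_2} \A_0$, so
$$\A(k/k) \otimes_{\A_0} N \cong H(k/k) \otimes_{\F_2} \A_0 \otimes_{\A_0} N \cong H(k/k) \otimes_{\F_2} N.$$
There is no extra $\G$ factor: your claimed decomposition $H(k/k) \otimes_{\F_2} \G \otimes_{\F_2} N$ is incorrect. Relatedly, the left $\M$-action on $H(k/k) \otimes_{\F_2} N$ is simply the action on the first factor --- no twisted diagonal action is involved. Formula \ref{eq} governs commuting $Q_k$ past $P^R$ \emph{inside} $\A_0$, which is how one sees (Prop.\ \ref{2si}) that the two-sided ideal of $\M$ is what it is; but on the module $H(k/k) \otimes_{\F_2} N$, with $N$ a $\G$-module, $\M$ acts only through $H(k/k)$ and $Q_j r_i = \delta_{ij}$. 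Your filtration-by-$r$-degree/triangularity argument is aimed at a twist that isn't there.

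\textbf{``Injective iff free'' over $\M$ is false, and the proposition itself shows it.} The equivalence you invoke holds for \emph{finite-dimensional} Frobenius algebras, but $\M = \Lambda_{\F_2}(Q_i)_{i \geq 0}$ is infinite-dimensional. Indeed, specializing the proposition to $N = \F_2$ in bidegree $(0)[0]$ gives (using $\A(k/k)\otimes_{\A_0}\F_2 \cong H(k/k)$) that $H(k/k)$ is an injective graded $\M$-module. But $H(k/k)$ is concentrated in nonpositive bidegrees, while any nonzero free graded $\M$-module contains elements of arbitrarily large positive topological degree. So $H(k/k)$ is injective but not free, and your reduction ``so I want to exhibit it as free'' cannot succeed.

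\textbf{The actual content of the proof is absent.} The paper's proof runs Baer's criterion directly: given $\I \subseteq \M$ and $\phi : \I \to H(k/k) \otimes_{\F_2} N$ (for $N$ on a single slope-$2$ line), it sets
$$\psi(1) = \sum_{x \in A} r_{I_x}\,\phi(x), \qquad A = \{x = Q_{I_x} \in \I : 0 \neq \phi(x) \in N\},$$
and verifies $\psi(Q_{I_y}) = \phi(Q_{I_y})$ by comparing summands, using the exterior-algebra structure of $H(k/k)$ and $Q_j r_i = \delta_{ij}$. This is where the bounded-above hypothesis is used concretely: it makes $A$ finite, so $\psi(1)$ is a legitimate element. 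The general $N$ is then handled by splitting over slope-$2$ lines and using boundedness again to land in a direct sum rather than a product. Your write-up gestures at the finiteness issue but never produces the extension map or the verification, which is the entire mathematical content here.
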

\begin{proof}
	In order to prove the injectivity of $\A(k/k) \otimes_{\A_0}N$, we want to use Baer's criterion. So, let $\I$ be a left ideal of $\M$ and $\phi:\I \rightarrow \A(k/k) \otimes_{\A_0}N$ a graded morphism of left $\M$-modules. We want to find a $\psi:\M \rightarrow \A(k/k) \otimes_{\A_0}N$ extending $\phi$.
	
	First, consider a bounded above left $\G$-module $N$ that is concentrated on a single slope $2$ line, i.e. $N \cong N \langle i \rangle$ for some $i$. Note that, by Proposition \ref{st}, $\A(k/k) \otimes_{\A_0}N \cong H(k/k) \otimes_{\F_2} N$ and $\M$ acts on the left only on $H(k/k)$. Let $x=Q_{I_x}=\Pi_{i \in I_x}Q_i$ be the product of some Milnor operations and, similarly, let $r_{I_x}$ denote the product $\Pi_{i \in I_x}r_i$. Consider the set $A=\{x=Q_{I_x} \in \I : 0 \neq \phi(x) \in N \}$. Note that $A$ is finite, possibly empty, since $N$ is bounded above by hypothesis. At this point, define a morphism $\psi:\M \rightarrow \A(k/k) \otimes_{\A_0}N$ by imposing $\psi(1)=\sum_{x \in A} r_{I_x} \phi(x)$. We just need to check that this morphism extends $\phi$.
	
	Take an element $y=Q_{I_y}$ in $\I$ and let $\phi(y)=\sum_ {\beta \in B} r_{I_{\beta}}n_{\beta}$ for some non-trivial $r_{I_{\beta}} \in H(k/k)$ and $n_{\beta} \in N$, where the set $B$ may be empty. Note that for any $\beta \in B$ the elements $r_{I_{\beta}}$ are all on the same slope $2$ line by degree reasons, from which it follows that $I_{\gamma} \not \subseteq I_{\beta}$ for any couple of different elements $\beta$ and $\gamma$ in $B$. Hence, we have that for any $\gamma \in B$
	$$\phi(Q_{I_{\gamma}}y)= Q_{I_{\gamma}}\phi(y)=Q_{I_{\gamma}}\sum_ {\beta \in B} r_{I_{\beta}}n_{\beta}=n_{\gamma}.$$
	Therefore, we deduce that $I_{\gamma} \cap I_y = \emptyset$ for any $\gamma \in B$ and
	$$\phi(y)=\sum_ {\beta \in B} r_{I_{\beta}}\phi(Q_{I_{\beta}}y)=\sum_ {\beta \in B} r_{I_{\beta}} \phi(Q_{I_{\beta} \cup I_y}).$$
	On the other hand,
	$$\psi(y)=\psi(Q_{I_y})=Q_{I_y}\psi(1)=Q_{I_y}\sum_{x \in A} r_{I_x} \phi(x)=\sum_{x \in A,I_y \subseteq I_x} r_{I_x-I_y} \phi(Q_{I_x}).$$
	Clearly, each summand of $\phi(y)$ is a summand of $\psi(y)$ too, since $Q_{I_{\beta} \cup I_y} \in A$ for any $\beta \in B$. Now, take $x=Q_{I_x} \in A$ such that $I_y \subseteq I_x$. Then, we have that
	$$\phi(x)=Q_{I_x-I_y}\phi(y)=Q_{I_x-I_y}\sum_ {\beta \in B} r_{I_{\beta}}n_{\beta} \in N$$
	from which it follows that there exists $\gamma \in B$ such that $\phi(x)=n_{\gamma}$ and $I_x-I_y=I_{\gamma}$. Therefore, each summand of $\psi(y)$ is a summand of $\phi(y)$ too from which we deduce that $\psi(y)=\phi(y)$.
	
    Now, let $N$ be any bounded above left $\G$-module. Then, as noticed before, $N \cong \bigoplus_{i \in \Z} N \langle i \rangle$ as left $\G$-modules. Let $\phi_i$ be the composition of $\phi$ with the projection to the $i$th factor $\A(k/k) \otimes_{\A_0}N \langle i \rangle$. From what we have already proved we know that there exist morphisms $\psi_i: \M \rightarrow \A(k/k) \otimes_{\A_0}N \langle i \rangle$ extending the respective $\phi_i$. Therefore, there exists a morphism $\psi': \M \rightarrow \prod_{i \in \Z} A(k/k) \otimes_{\A_0}N\langle i \rangle$ compatible with the $\psi_i$. Now, note that, since $N$ is bounded above, $\psi'(1)$ belongs to $\bigoplus_{i \in \Z} A(k/k) \otimes_{\A_0}N\langle i \rangle$. Hence, $\psi'$ lifts to a morphism $\psi: \M \rightarrow \bigoplus_{i \in \Z}A(k/k) \otimes_{\A_0} N\langle i \rangle \cong A(k/k) \otimes_{\A_0}N$ extending $\phi$, which completes the proof. 
\end{proof}

At this point, let us show a lemma which will be useful in the proof of the main theorem of this section.

\begin{lem}\label{tl}
	Let $k$ be a flexible field. Then, for any left $\G$-modules $N$ and $N'$ the obvious homomorphisms
	$$Hom_{\F_2}(N,N') \rightarrow Hom_{\M}(N,N') \rightarrow Hom_{\M}(N,\A(k/k) \otimes_{\A_0} N')$$ 
	and
	$$Hom_{\G}(N,N') \rightarrow Hom_{\A_0}(N,N') \rightarrow Hom_{\A_0}(N,\A(k/k) \otimes_{\A_0} N')$$ 
	are isomorphisms.
\end{lem}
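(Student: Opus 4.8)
The plan is to transport everything, via Proposition \ref{st}, into the concrete bimodule $H(k/k)\otimes_{\F_2}N'$ and then argue with the generators $Sq^n$. Both displayed chains begin with a tautological isomorphism: since $N$ and $N'$ are $\G$-modules, $\M$ (and likewise every odd Steenrod square) acts on them only through its image in $\G$, so an $\F_2$-linear map $N\to N'$ is automatically $\M$-linear and a $\G$-linear map is automatically $\A_0$-linear, while any $\A_0$-linear map between $\G$-modules is $\G$-linear; hence $Hom_{\F_2}(N,N')=Hom_{\M}(N,N')$ and $Hom_{\G}(N,N')=Hom_{\A_0}(N,N')$. The real content is therefore that post-composition with the unit $\eta\colon N'\to\A(k/k)\otimes_{\A_0}N'$ is an isomorphism on $Hom_{\M}(N,-)$ and on $Hom_{\A_0}(N,-)$. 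By Proposition \ref{st} we may replace $\A(k/k)\otimes_{\A_0}N'$ by $H(k/k)\otimes_{\F_2}N'$; I would first record that the $\A_0$-action transported along this identification is the diagonal one coming from the coproduct $\psi$ of $\A_0$ and the Cartan formula, namely $a\cdot(h\otimes x)=\sum a'(h)\otimes a''(x)$ for $\psi(a)=\sum a'\otimes a''$, with $a''$ acting on $x\in N'$ through $\G$. I need only two consequences: since $Q_i$ is primitive, $Q_i\cdot(h\otimes x)=Q_i(h)\otimes x+h\otimes Q_i(x)=Q_i(h)\otimes x$ (the second term dies because $N'$ is a $\G$-module), so $\M$ acts on the $H(k/k)$-factor alone; and $Sq^n\cdot(1\otimes x)=\sum_{i+j=n}Sq^i(1)\otimes Sq^j(x)=1\otimes Sq^n(x)$, because $Sq^i(1)=0$ for $i>0$ (the bidegree of $Sq^i$ has positive topological degree, whereas $H(k/k)$ lives in non-positive topological degrees).

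For the first chain, view $N$ as an $\M$-module: it is a direct sum of shifted copies of the trivial module $\F_2=\M/\M_{>0}$, so $Hom_{\M}(N,X)\cong Hom_{\F_2}(N,X^{\M})$, where $X^{\M}=\bigcap_i\ker(Q_i\colon X\to X)$ is the part annihilated by all Milnor operations. For $X=H(k/k)\otimes_{\F_2}N'$, since $Q_i$ acts on the first factor only, $X^{\M}=\big(\bigcap_i\ker(Q_i\text{ on }H(k/k))\big)\otimes_{\F_2}N'$. By Theorem \ref{vis}, $H(k/k)=\bigotimes_{i\geq0}\Lambda_{\F_2}(r_i)$, and $Q_j$ — being primitive, hence a derivation, with $Q_jr_i=\delta_{ij}$ — is contraction against $r_j$; thus $\ker Q_j$ is spanned by the monomials not involving $r_j$, and $\bigcap_i\ker Q_i=\F_2$. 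Hence $X^{\M}\cong N'$, the composite $Hom_{\F_2}(N,N')\to Hom_{\M}(N,N')\to Hom_{\M}(N,\A(k/k)\otimes_{\A_0}N')$ becomes the identity of $Hom_{\F_2}(N,N')$ under these identifications, and both arrows are isomorphisms.

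For the second chain, the previous paragraph already shows that any $\M$-linear — in particular any $\A_0$-linear — map $f\colon N\to\A(k/k)\otimes_{\A_0}N'$ has the form $f(y)=1\otimes g(y)$ for a unique $\F_2$-linear $g\colon N\to N'$, and $\eta_*$ is injective because $1\otimes(-)$ is. It remains to check that $f$ is $\A_0$-linear if and only if $g$ is $\G$-linear. Since $\A_0$ is generated by the $Sq^n$ (see the proof of Proposition \ref{iso}), $f$ is $\A_0$-linear iff $f(Sq^ny)=Sq^nf(y)$ for all $n$ and $y$, which by the two formulas above reads $g(Sq^ny)=Sq^ng(y)$. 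For $n$ odd both sides vanish, because $Sq^{2i+1}$ has bidegree $(i)[2i+1]$, which lies off the slope-$2$ line, so its image in $\G$ is $0$ and it acts trivially on every $\G$-module; for $n=2r$ these conditions, over all $r$, are precisely $\G$-linearity of $g$, since $\G$ is generated by the $Sq^{2r}$. Therefore $\eta_*$ is onto, and the second chain consists of isomorphisms as well.

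The only non-formal inputs are (i) the identification of the transported $\A_0$-module structure on $\A(k/k)\otimes_{\A_0}N'$ with the diagonal one, which rests on the primitivity of the $Q_i$ and the Cartan formula in $\A(k/k)$ (both descending from $\A(k)$), and (ii) the computation $\bigcap_i\ker(Q_i\text{ on }H(k/k))=\F_2$ obtained from Theorem \ref{vis}. These are the steps I expect to demand the most care; everything else is bookkeeping with the generators $Sq^n$ together with the triviality of the $\M$- and odd-square-actions on $\G$-modules.
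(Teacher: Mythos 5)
Your argument is correct, and for the central step it takes a different (more structural) route than the paper. Where the paper shows that every $\M$-linear map $f\colon N\to\A(k/k)\otimes_{\A_0}N'$ lands in $1\otimes N'$ by a direct contradiction argument — writing $f(x)=\sum_\beta r_{I_\beta}n_\beta$, picking $\gamma$ with $I_\gamma$ maximal among the nonempty indices, and applying $Q_{I_\gamma}$ to kill all terms except $n_\gamma$, which must then vanish because $Q_{I_\gamma}x=0$ — you instead use that $N$ is a sum of trivial $\M$-modules to write $Hom_{\M}(N,X)\cong Hom_{\F_2}(N,X^{\M})$ and then compute $X^{\M}=(\bigcap_i\ker Q_i)\otimes N'=\F_2\otimes N'$ from Theorem \ref{vis}, reading the $Q_i$ as contractions on the exterior algebra $H(k/k)$. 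This is the same underlying fact (the $\M$-invariants of $H(k/k)\otimes N'$ are exactly $N'$), but your presentation isolates it as a clean Koszul-duality-type statement rather than running an ad hoc element computation; it also makes the injectivity and surjectivity of $\eta_*$ fall out simultaneously. For the $\A_0$/$\G$ chain, the paper is terser: it simply observes that an $\A_0$-linear map is $\M$-linear, hence valued in $N'$, and leaves implicit that $N'\hookrightarrow\A(k/k)\otimes_{\A_0}N'$ is an $\A_0$-submodule on which $\A_0$ acts through $\G$; you instead re-derive this by checking compatibility on the generators $Sq^n$, splitting into odd squares (which die in $\G$ by the slope-$2$ degree constraint) and even squares (which generate $\G$). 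Both versions rest on the same input — Proposition \ref{st}, the exterior-algebra description of $H(k/k)$, and the primitivity of the $Q_i$ needed to get $Q_i\cdot(h\otimes x)=Q_i(h)\otimes x$ — and the paper, like you, uses the last of these tacitly; what you gain is a more transparent account of the $\A_0$-module structure on $H(k/k)\otimes_{\F_2}N'$, at the modest cost of having to record the Cartan/coproduct formula that the paper never states.
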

\begin{proof}
First, note that, since $\G$ is the quotient of $\A_0$ by its two-sided ideal generated by elements of $\M$ of positive degree by Proposition \ref{2si}, then $Hom_{\F_2}(N,N') \rightarrow Hom_{\M}(N,N')$ and $Hom_{\G}(N,N') \rightarrow Hom_{\A_0}(N,N')$ are clearly isomorphisms.

Let $f:N \rightarrow \A(k/k) \otimes_{\A_0} N'$ be a morphism of left $\M$-modules, let $x$ be an element of $N$ and $f(x)=\sum_{\beta \in B} r_{I_{\beta}}n_{\beta}$. Suppose there exists $\beta \in B$ such that $I_{\beta} \neq \emptyset$. Choose among them $\gamma$ with the maximal $I_{\gamma}$. Then, there exists an $i_{\beta} \in I_{\gamma}-I_{\beta}$, for any $\gamma \neq \beta \in B$. Hence, $Q_{I_{\gamma}}r_{I_{\beta}}=Q_{I_{\gamma}-\{i_{\beta}\}}Q_{i_{\beta}}r_{I_{\beta}}=0$ for any $\beta \neq \gamma$ by Theorem \ref{vis}. It follows that
$$0=f(Q_{I_{\gamma}}x)=Q_{I_{\gamma}}f(x)=Q_{I_{\gamma}}\sum_{\beta \in B} r_{I_{\beta}}n_{\beta}=\sum_{\beta \in B} Q_{I_{\gamma}}(r_{I_{\beta}})n_{\beta}=n_{\gamma}.$$
Thus, $f$ can only take values in $N'$.

Now, let $f:N \rightarrow \A(k/k) \otimes_{\A_0} N'$ be a morphism of left $\A_0$-modules. Then, in particular, it is a morphism of left $\M$-modules. From what we have already proved it follows that $f$ takes values in $N'$, which is what we aimed to show. 
\end{proof}

The next theorem is the main result of this section. In fact, it allows to present the isotropic $E_2$-page as a more familiar object, as we will point out in the corollary.

\begin{thm}
	Let $k$ be a flexible field. Then, the homomorphism
	$$Ext^q_{\G}(N,N') \rightarrow Ext^q_{\A(k/k)}(\A(k/k) \otimes_{\A_0} N,\A(k/k) \otimes_{\A_0} N')$$
	is an isomorphism for any $q$, any left $\G$-module $N$ and any bounded above left $\G$-module $N'$.
\end{thm}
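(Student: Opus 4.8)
The plan is a two-stage change-of-rings argument. The first stage, a Shapiro-type reduction, replaces $\A(k/k)$ by $\A_0$; the second, a dimension-shifting induction, replaces $\A_0$ by $\G$. Proposition \ref{inj} provides the acyclicity that makes the second stage collapse, and Lemma \ref{tl} provides its base case.

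For the first stage, recall from Proposition \ref{st} that $\A(k/k) \cong H(k/k) \otimes_{\F_2} \A_0$, so $\A(k/k)$ is free — hence flat — as a right $\A_0$-module, as already used in this section. Therefore the functor $\A(k/k) \otimes_{\A_0} -$ from left $\A_0$-modules to left $\A(k/k)$-modules is exact and sends free $\A_0$-modules to free $\A(k/k)$-modules. Applying it to a free $\A_0$-resolution of $N$ (viewed as an $\A_0$-module through $\A_0 \twoheadrightarrow \G$) and using the adjunction $Hom_{\A(k/k)}(\A(k/k) \otimes_{\A_0} -,\, M) \cong Hom_{\A_0}(-,\, M)$, one gets a natural isomorphism $Ext^q_{\A(k/k)}(\A(k/k) \otimes_{\A_0} N,\, M) \cong Ext^q_{\A_0}(N,\, M)$ for every left $\A(k/k)$-module $M$, compatible with the evident comparison maps. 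Taking $M = \A(k/k) \otimes_{\A_0} N'$, it remains to prove that the natural map $Ext^q_\G(N,N') \to Ext^q_{\A_0}(N,\, \A(k/k) \otimes_{\A_0} N')$ is an isomorphism for all $q$.

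The key input for the second stage is the vanishing $Ext^q_{\A_0}(F,\, \A(k/k) \otimes_{\A_0} N') = 0$ for $q > 0$ whenever $F$ is a free left $\G$-module. Since $Ext$ carries direct sums in the first variable to products, it suffices to treat $F = \G$. By Proposition \ref{2si}, $\G \cong \A_0 \otimes_\M \F_2$ with $\F_2$ the trivial left $\M$-module, and by Proposition \ref{basis} (and the remark after it) $\A_0$ is free, hence flat, as a right $\M$-module; so $\A_0 \otimes_\M -$ transforms a free left $\M$-resolution of $\F_2$ into a free left $\A_0$-resolution of $\G$, which yields $Ext^q_{\A_0}(\G,\,-) \cong Ext^q_\M(\F_2,\,-)$. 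Now Proposition \ref{inj} says exactly that $\A(k/k) \otimes_{\A_0} N'$ is an injective left $\M$-module — this is where boundedness of $N'$ enters — so $Ext^q_\M(\F_2,\, \A(k/k) \otimes_{\A_0} N') = 0$ for $q > 0$, proving the vanishing.

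To finish, I would dimension-shift. Choose a free $\G$-resolution $\cdots \to F_1 \to F_0 \to N \to 0$ with syzygies $\Omega^i N$; the short exact sequences $0 \to \Omega^{i+1}N \to F_i \to \Omega^i N \to 0$ are exact over $\G$, hence over $\A_0$. Running the long exact $Ext$-sequences of $Ext^\bullet_\G(-,N')$ and of $Ext^\bullet_{\A_0}(-,\, \A(k/k) \otimes_{\A_0} N')$, the freeness of the $F_i$ over $\G$ together with the vanishing just established give, in both settings, isomorphisms $Ext^q(\Omega^{i+1}N) \cong Ext^{q+1}(\Omega^i N)$ for $q \geq 1$ and a description of $Ext^1(\Omega^i N)$ as the cokernel of $Hom(F_i) \to Hom(\Omega^{i+1}N)$; the $q = 0$ case of the theorem, i.e. the isomorphism $Hom_\G(-,N') \cong Hom_{\A_0}(-,\, \A(k/k) \otimes_{\A_0} N')$ of Lemma \ref{tl} applied to $F_i$ and $\Omega^{i+1}N$, identifies these two cokernels. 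An induction on $q$ then propagates the isomorphism through the $\Omega^j N$, giving the desired conclusion. The main obstacle I expect is purely organizational: verifying that the Shapiro isomorphism, the connecting maps of the two long exact sequences, and Lemma \ref{tl} are natural in the way required, so that the resulting composite really is the comparison map of the statement rather than some ad hoc isomorphism; secondarily, one must be slightly careful with degreewise finiteness when the free resolutions involve infinite direct sums, so that the interchanges of $Hom$ with sums and products are valid in each bidegree.
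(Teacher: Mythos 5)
Your proposal is correct and follows essentially the same route as the paper: an Eckmann--Shapiro reduction using freeness of $\A(k/k)$ over $\A_0$, then a dimension-shifting induction along a free $\G$-resolution of $N$, with Proposition \ref{inj} supplying the acyclicity of $\A(k/k)\otimes_{\A_0}N'$ as an $\M$-module and Lemma \ref{tl} supplying the $q=0$ base case. The only cosmetic difference is that the paper builds a specific resolution by iterating $N_p=Ker(\A_0\otimes_{\M}N_{p-1}\to N_{p-1})\cong Ker(\G\otimes_{\F_2}N_{p-1}\to N_{p-1})$, so that each term is simultaneously free over $\G$ and of the form $\A_0\otimes_{\M}(-)$, whereas you work with an abstract free $\G$-resolution and establish the needed $\A_0$-acyclicity of free $\G$-modules via the identification $\G\cong\A_0\otimes_{\M}\F_2$ --- the two are equivalent.
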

\begin{proof}
	First, we have that 
	$$Ext^q_{\A(k/k)}(\A(k/k) \otimes_{\A_0} N,\A(k/k) \otimes_{\A_0} N') \rightarrow Ext^q_{\A_0}(N,\A(k/k) \otimes_{\A_0} N')$$
	is an isomorphism since $\A(k/k)$ is free over $\A_0$. Then, we only need to prove that the obvious composition
	$$Ext^q_{\G}(N,N') \rightarrow Ext^q_{\A_0}(N,N') \rightarrow Ext^q_{\A_0}(N,\A(k/k) \otimes_{\A_0} N')$$
	is an isomorphism.

    Let us define by induction certain left $\A_0$-modules:
	$$N_0=N;$$
	$$N_p=Ker(\A_0 \otimes_{\M} N_{p-1} \rightarrow N_{p-1}) \cong Ker(\G \otimes_{\F_2} N_{p-1} \rightarrow N_{p-1}).$$
	Notice that, by Proposition \ref{2si}, all $N_p$ have trivial left $\M$-action. By definition, for any $p>0$ we have a short exact sequence
	$$0 \rightarrow N_p \rightarrow \A_0 \otimes_{\M} N_{p-1} \rightarrow N_{p-1} \rightarrow 0$$
	that induces the following morphism of long exact sequences:
	$$
	\xymatrix@C=0.6em{
		 \ar@{->}[r] &Ext^q_{\G}(N_{p-1},N') \ar@{->}[r] \ar@{->}[d]& Ext^q_{\G}(\G \otimes_{\F_2} N_{p-1},N') \ar@{->}[r] \ar@{->}[d]&  Ext^q_{\G}(N_p,N') \ar@{->}[d] \ar@{->}[r] & \\
		 \ar@{->}[r] &Ext^q_{\A_0}(N_{p-1},\A(k/k) \otimes_{\A_0} N') \ar@{->}[r] & Ext^q_{\A_0}(\A_0 \otimes_{\M} N_{p-1},\A(k/k) \otimes_{\A_0} N') \ar@{->}[r]& Ext^q_{\A_0}(N_p,\A(k/k) \otimes_{\A_0} N') \ar@{->}[r] &.
	}
	$$
	Since $\G$ is free over $\F_2$, we have that
	$$Ext^q_{\G}(\G \otimes_{\F_2} N_{p-1}, N') \cong Ext^q_{\F_2}(N_{p-1},N') \cong
	\begin{cases}
	Hom_{\F_2}(N_{p-1},N'), & q=0 \\
	0, & q > 0
	\end{cases}
	.$$
	Analogously, since $\A_0$ is free over $\M$, $\A(k/k) \otimes_{\A_0} N'$ is an injective left $\M$-module by Proposition \ref{inj} and from Lemma \ref{tl}, one obtains that
	$$Ext^q_{\A_0}(\A_0 \otimes_{\M} N_{p-1},\A(k/k) \otimes_{\A_0} N') \cong Ext^q_{\M}(N_{p-1},\A(k/k) \otimes_{\A_0} N') \cong
	\begin{cases}
	Hom_{\F_2}(N_{p-1},N'), & q=0 \\
	0, & q > 0
	\end{cases}
	.$$
	It follows that, if $q>0$, then we have the following commutative diagram
	$$
	\xymatrix{
		Ext^1_{\G}(N_{q-1},N') \ar@{->}[r]^{\cong} \ar@{->}[d]& Ext^{q}_{\G}(N,N') \ar@{->}[d]  \\
		Ext^1_{\A_0}(N_{q-1},\A(k/k) \otimes_{\A_0} N') \ar@{->}[r]^{\cong} & Ext^{q}_{\A_0}(N,\A(k/k) \otimes_{\A_0} N')
	}
$$
where the horizontal maps are isomorphisms. Moreover, we have a morphism of exact sequences
	$$
	\xymatrix@C=1.7em{
		Ext^0_{\F_2}(N_{q-1},N') \ar@{->}[r] \ar@{->}[d]_{\cong}& Ext^0_{\G}(N_q,N') \ar@{->}[r] \ar@{->}[d]^{\cong}&  Ext^1_{\G}(N_{q-1},N') \ar@{->}[r] \ar@{->}[d]&0 \ar@{->}[d]^{\cong}  \\
		Ext^0_{\M}(N_{q-1},\A(k/k) \otimes_{\A_0} N') \ar@{->}[r] & Ext^0_{\A_0}(N_q,\A(k/k) \otimes_{\A_0} N') \ar@{->}[r]& Ext^1_{\A_0}(N_{q-1},\A(k/k) \otimes_{\A_0} N') \ar@{->}[r] &0
	}
	$$
	where the vertical isomorphisms are due to Lemma \ref{tl}, from which it follows that the third vertical morphism is an isomorphism as well. Therefore, the morphism 
	$$Ext^q_{\G}(N,N') \rightarrow Ext^q_{\A_0}(N,\A(k/k) \otimes_{\A_0} N')$$
	is an isomorphism, which finishes the proof.
	 
\end{proof}

As an immediate corollary of the previous theorem we get the following result.

\begin{thm}\label{e2}
	Let $k$ be a flexible field. Then, the homomorphism 
	$$Ext_{\G}(\F_2,\F_2) \rightarrow Ext_{\A(k/k)}(H(k/k),H(k/k))$$
	is an isomorphism of rings respecting higher products.
\end{thm}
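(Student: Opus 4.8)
The plan is to deduce Theorem \ref{e2} directly from the previous theorem by specializing $N = N' = \F_2$, viewed as the trivial left $\G$-module concentrated in bidegree $(0)[0]$. First I would check that this $N'=\F_2$ satisfies the hypothesis of the theorem: it is trivially bounded above, since $\F_2^{p,q} \cong 0$ for all $p > 0$. Next, I would identify the target of the isomorphism. By Proposition \ref{st} we have $\A(k/k) \cong H(k/k) \otimes_{\F_2} \A_0$ as $H(k/k)$-$\A_0$-bimodules, and hence $\A(k/k) \otimes_{\A_0} \F_2 \cong H(k/k) \otimes_{\F_2} (\A_0 \otimes_{\A_0} \F_2) \cong H(k/k)$, where $\A_0$ acts on $\F_2$ through the augmentation. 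So the previous theorem, applied with $q$ ranging over all cohomological degrees (and keeping track of the internal bidegree), gives an additive isomorphism
$$Ext^{*,**}_{\G}(\F_2,\F_2) \xrightarrow{\;\cong\;} Ext^{*,**}_{\A(k/k)}(H(k/k),H(k/k)).$$

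The remaining content is to promote this additive isomorphism to an isomorphism of rings respecting higher products. Here I would invoke the fact, already noted in the paragraph preceding Proposition \ref{inj}, that the exact functor $N \mapsto \A(k/k) \otimes_{\A_0} N$ from left $\G$-modules to left $\A(k/k)$-modules is a composite of the (exact, fully faithful) restriction along the quotient $\A_0 \twoheadrightarrow \G$ with the (exact, since $\A(k/k)$ is $\A_0$-free) induction functor, and that the induced maps on $Ext$ commute with Yoneda products and with the higher (Massey) products determined by the $A_\infty$-structure on the $Ext$-algebra. Since the map in the theorem is precisely the map on $Ext$ induced by this functor, it is automatically multiplicative and compatible with Massey products; being also a bijection in every degree by the previous theorem, it is an isomorphism of rings with higher products. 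I would spell out why functoriality of $Ext$ in this setting respects Massey products: one can compute all these $Ext$-algebras via a single injective (or projective) resolution and transport of structure, or alternatively observe that Yoneda's description of $Ext$ as equivalence classes of extensions, together with splicing, makes the product structure manifestly functorial, and Massey products are defined purely in terms of this splicing data together with null-homotopies.

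The main obstacle, such as it is, lies not in the ring structure but in being careful about the bigrading and about which $Ext$ groups appear. The previous theorem is stated for a fixed cohomological degree $q$; to get a ring isomorphism one must assemble over all $q$ and over the internal bidegree $(t,u)$, and check that the multiplicative structure on $\bigoplus_q Ext^q$ on both sides is the Yoneda product in the evident sense. One should also double-check that $Ext_{\G}(\F_2,\F_2)$, where $\G \cong \A$ by Proposition \ref{iso}, is genuinely the classical $Ext_{\A}(\F_2,\F_2)$ up to the reindexing $Sq^r \mapsto Sq^{2r}$ that doubles weights; this is where the ``slope $2$ line'' bookkeeping enters, but it is purely formal given Proposition \ref{iso}. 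No genuinely hard step remains once the previous theorem is in hand; Theorem \ref{e2} is essentially its specialization together with the observation that the isomorphism is induced by a (monoidal-compatible) functor.
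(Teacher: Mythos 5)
Your proposal is correct and follows the paper's argument exactly: specialize the previous theorem to $N \cong N' \cong \F_2$, observe that $\F_2$ is bounded above, use Proposition \ref{st} to identify $\A(k/k) \otimes_{\A_0} \F_2 \cong H(k/k)$, and note that compatibility with Yoneda and higher products follows because the map on $Ext$ is induced by the exact functor $N \mapsto \A(k/k) \otimes_{\A_0} N$ introduced at the start of Section 5. The extra remarks on bigrading and on why functoriality of $Ext$ respects Massey products are sound elaborations of points the paper leaves implicit.
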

\begin{proof}
	It follows immediately by applying the previous theorem to the case $N \cong N' \cong \F_2$ and by noticing that $\A(k/k) \otimes_{\A_0} \F_2 \cong H(k/k)$ by Proposition \ref{st}. The compatibility with products comes from the fact that the isomorphism is induced by the exact functor described at the beginning of this section. 
\end{proof}

The previous theorem can be rephrased in the following way.

\begin{cor}\label{boh}
	Let $k$ be a flexible field. Then, the $E_2$-page of the isotropic motivic Adams spectral sequence coincides with the $E_2$-page of the classical Adams spectral sequence in doubled degree.
\end{cor}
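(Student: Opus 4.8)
The plan is to deduce Corollary \ref{boh} directly from Theorem \ref{e2} by unwinding the degree bookkeeping, so essentially no new homotopy-theoretic input is needed. First I would recall from Section 4 that the $E_2$-page of the isotropic motivic Adams spectral sequence is $E_2^{s,t,u} = Ext^{s,t,u}_{\A(k/k)}(H(k/k),H(k/k))$, converging (when the $\varprojlim^1$ vanishes) to $\pi_{t-s,u}(\X^{\wedge}_{\hz})$, whereas the classical Adams spectral sequence has $E_2$-page $Ext^{s,t}_{\A}(\F_2,\F_2)$ converging to $\pi_{t-s}(\s^{\wedge}_2)$. By Theorem \ref{e2} there is a ring isomorphism $Ext_{\G}(\F_2,\F_2) \xrightarrow{\cong} Ext_{\A(k/k)}(H(k/k),H(k/k))$ respecting higher products, so it remains only to identify $Ext_{\G}(\F_2,\F_2)$ with $Ext_{\A}(\F_2,\F_2)$ and to track how the three gradings $(s,t,u)$ transform.

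The key step is the doubling. By Proposition \ref{iso} the isomorphism of algebras $\A \xrightarrow{\cong} \G$ sends $Sq^r$ to $Sq^{2r}$, hence doubles both the topological degree and the weight: an operation of classical degree $n$ lands in bidegree $(n)[2n]$, i.e. on the slope $2$ line $p = 2q$. Applying the contravariant functoriality of $Ext$ (or rather: an isomorphism of graded algebras induces an isomorphism on $Ext$ of the ground field), one gets $Ext^{s,n}_{\A}(\F_2,\F_2) \cong Ext^{s,2n,n}_{\G}(\F_2,\F_2)$, where the single internal degree $n$ on the left corresponds to internal bidegree $(2n)[n] = (t)[u]$ with $t = 2n$ and $u = n$ on the right; note that on all of $\G$ one has $t = 2u$ since $\G$ lives on the slope $2$ line, so this is a genuine reindexing rather than a loss of information. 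Composing with Theorem \ref{e2} we obtain $Ext^{s,n}_{\A}(\F_2,\F_2) \cong E_2^{s,2n,n}$ of the isotropic Adams spectral sequence, with the isomorphism respecting both the bigrading (up to the doubling $n \mapsto (2n,n)$) and all multiplicative and higher (Massey) structure.

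Finally I would record the conclusion that the isotropic $E_2$-page is concentrated on the subset $\{t = 2u\}$ — everything off the slope $2$ line vanishes because $\G$ does — and that its restriction to that line is, degreewise, the classical $E_2$-page with the internal degree $n$ replaced by the pair $(2n,n)$; this is precisely the assertion that the two $E_2$-pages "coincide in doubled degree''. The only point requiring a little care is making explicit which of $t$ and $u$ plays the role of the classical internal degree: the classical internal degree $n$ is the weight $u$, and the topological degree $t$ of the isotropic page is the double $2u$. No genuine obstacle arises here; the content is entirely in Theorem \ref{e2} and Proposition \ref{iso}, and the corollary is just the translation of that content into the language of spectral sequence pages.
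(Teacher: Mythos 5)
Your argument is correct and follows the same path as the paper: it combines Theorem~\ref{e2} with the isomorphism $\A\cong\G$, $Sq^r\mapsto Sq^{2r}$, from Proposition~\ref{iso}, and the explicit degree bookkeeping you carry out is exactly what the phrase ``doubled degree'' is meant to record. One small notational slip: you write the internal bidegree on the slope-$2$ line both as $(n)[2n]$ and as $(2n)[n]$; in the paper's convention $(q)[p]$ (weight, then topological degree) the correct form is $(n)[2n]$, consistent with your correct identification $t=2n$, $u=n$.
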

\begin{proof}
	Recall from Theorem \ref{e2} that the $E_2$-page of the isotropic motivic Adams spectral sequence is given by
	$$E_2=Ext_{\A(k/k)}(H(k/k),H(k/k)) \cong Ext_{\G}(\F_2,\F_2)$$
	while the $E_2$-page of the classical Adams spectral sequence is given by
	$$E_2=Ext_{\A}(\F_2,\F_2).$$
	The statement follows immediately from the fact that $\A$ and $\G$ are isomorphic via the map $Sq^r \mapsto Sq^{2r}$ by Proposition \ref{iso}. 
\end{proof}

Once the $E_2$-page of the isotropic Adams spectral sequence has been identified the game is done. Indeed, the very structure of it implies that all subsequent differentials in the spectral sequence are trivial, as stated in the following result.

\begin{thm}\label{main}
	Let $k$ be a flexible field. Then, the stable motivic homotopy groups of the $\hz$-completed isotropic sphere spectrum are completely described by
	$$\pi_{*,*'}(\X^{\wedge}_{\hz}) \cong Ext_{\G}^{2*'-*,2*',*'}(\F_2,\F_2) \cong Ext_{\A}^{2*'-*,*'}(\F_2,\F_2).$$
\end{thm}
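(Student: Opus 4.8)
The plan is to run the isotropic motivic Adams spectral sequence of Theorem \ref{lab} for the sphere spectrum and to exploit the rigidity of its $E_2$-page, already identified in Theorem \ref{e2}, in order to force a collapse. The crucial structural input is that this $E_2$-page is concentrated on a single slope $2$ line, which makes every higher differential and every extension problem impossible for tridegree reasons.

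Concretely, I would first record that by Theorem \ref{e2} and Proposition \ref{st} one has $E_2^{s,t,u}=Ext^{s,t,u}_{\A(k/k)}(H(k/k),H(k/k)) \cong Ext^{s,t,u}_{\G}(\F_2,\F_2)$, and then observe that, since $\G^{p,q}$ vanishes whenever $p \neq 2q$, the augmentation ideal $\overline{\G}$ and all of its tensor powers live on the slope $2$ line $p=2q$, so $\F_2$ admits a free $\G$-resolution by modules concentrated on that line; applying $Hom_{\G}(-,\F_2)$ then shows $E_2^{s,t,u}=0$ whenever $t \neq 2u$. Next I would check that the differential $d_r$, which has tridegree $(r,r-1,0)$, must vanish for every $r \geq 2$: since $E_r$ is a subquotient of $E_2$ it is still supported on $t=2u$, so a nonzero $d_r$ would require $t=2u$ at its source and $t+r-1=2u$ at its target simultaneously, forcing $r=1$. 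Therefore $E_2=E_{\infty}$.

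Having $E_r \cong E_2$ for all $r \geq 2$ makes the tower $\{E_r^{s,t,u}\}_r$ trivially Mittag-Leffler, so $\varprojlim_{r}^1 E_r^{s,t,u}=0$ and Theorem \ref{lab} gives strong convergence of the spectral sequence to $\pi_{*,*'}(\X^{\wedge}_{\hz})$. I would then fix a bidegree $(*,*')=(n,u)$ and note that the associated graded of $\pi_{n,u}(\X^{\wedge}_{\hz})$ for the Adams filtration is $\bigoplus_{s} E_{\infty}^{s,n+s,u}$, in which the slope $2$ condition $n+s=2u$ singles out the unique value $s=2u-n$; hence there is no extension problem and $\pi_{n,u}(\X^{\wedge}_{\hz}) \cong E_2^{2u-n,2u,u} \cong Ext^{2u-n,2u,u}_{\G}(\F_2,\F_2)$. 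Finally, Proposition \ref{iso}, via the isomorphism $\A \to \G$ sending $Sq^r$ of internal degree $r$ to $Sq^{2r}$ of weight $r$ and topological degree $2r$, identifies this with $Ext^{2u-n,u}_{\A}(\F_2,\F_2)$; setting $n=*$ and $u=*'$ yields the stated formula.

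The computations here are light, so the real work — and the main point to get right — is the degree bookkeeping: one has to follow the bidegrees faithfully through the chain of identifications $\A(k/k) \otimes_{\A_0} \F_2 \cong H(k/k)$, $Ext_{\A(k/k)} \cong Ext_{\G}$, and $\G \cong \A$ to be certain that the slope $2$ concentration survives and that it genuinely isolates a single Adams filtration in each $\pi_{*,*'}$, so that the collapse of the spectral sequence produces an honest isomorphism rather than merely an isomorphism of associated graded groups.
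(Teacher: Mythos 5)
Your proof is correct and follows essentially the same approach as the paper: identify the $E_2$-page with $Ext_{\G}(\F_2,\F_2)$ via Theorem \ref{e2}, observe it is concentrated on the slope-$2$ line $t=2u$, conclude that all higher differentials vanish by tridegree reasons, invoke the Mittag-Leffler condition for strong convergence, and read off $\pi_{*,*'}$ from the collapsed $E_\infty$-page. The paper states all of this more tersely — ``the statement follows immediately'' — whereas you spell out explicitly the vanishing of $E_2$ off the slope-$2$ line, the degree count that kills the differentials, the absence of extension problems (only one Adams filtration is nonzero in each bidegree), and the final degree bookkeeping via $\A \cong \G$; these are exactly the details the paper leaves implicit.
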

\begin{proof}
Note that all the differentials $d_r:E_r^{s,t,u} \rightarrow E_r^{s+r,t+r-1,u}$ from the second on vanish since $E_2^{s,t,u}=0$, and so $E_r^{s,t,u}=0$ for $r \geq 2$, when $t \neq 2u$. Therefore, we have that $\{E_r^{s,t,u}\}$ is Mittag-Leffler, since $E_2^{s,t,u} \cong E_{\infty}^{s,t,u}$ for any $s,t,u$. Hence, the isotropic motivic Adams spectral sequence is strongly convergent and collapses at the second page, from which the statement follows immediately. 
\end{proof}

As a corollary we have the following vanishing result.

\begin{prop}\label{van}
	Let $k$ be a flexible field. Then, $\pi_{p,q}(\X^{\wedge}_{\hz})  \cong 0$ for any $p>2q$, or $p<q$, or $q<0$.
\end{prop}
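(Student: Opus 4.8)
The plan is to deduce this as a direct corollary of Theorem \ref{main}, using only the known structure of $Ext_{\A}(\F_2,\F_2)$ together with the degree-doubling identification. By Theorem \ref{main} we have $\pi_{p,q}(\X^{\wedge}_{\hz}) \cong Ext_{\A}^{2q-p,q}(\F_2,\F_2)$, so each of the three vanishing ranges must be traced back to a vanishing statement for $Ext_{\A}^{s,t}(\F_2,\F_2)$ where $s=2q-p$ and $t=q$ (recalling that the internal degree $u=q$ and the topological degree is $t=2u=2q$, consistent with $\G$ living on the slope $2$ line). First I would recall the elementary facts about the classical $Ext$: it is concentrated in homological degrees $s \geq 0$, in internal degrees $t \geq 0$, and it vanishes whenever $t < s$ (since $\A$ is connected and generated in positive degrees, a minimal resolution raises internal degree strictly at each step, so $Ext^{s,t}=0$ for $t<s$; equivalently the classical Adams chart lives in the region $t-s \geq 0$, $s \geq 0$).

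Next I would translate each inequality. For $q < 0$: here $t = q < 0$, so $Ext_{\A}^{2q-p,q}(\F_2,\F_2) = 0$ because $Ext$ vanishes in negative internal degree; hence $\pi_{p,q}(\X^{\wedge}_{\hz}) \cong 0$. For $p > 2q$: here $s = 2q-p < 0$, so $Ext$ vanishes in negative homological degree, giving the claim. For $p < q$: here the Adams filtration $s=2q-p$ and internal degree $t=q$ satisfy $t - s = q - (2q-p) = p - q < 0$, i.e. $t < s$, which lies outside the region where classical $Ext$ is supported; hence $\pi_{p,q}(\X^{\wedge}_{\hz}) \cong 0$. Each of these is a one-line deduction once the dictionary between the bidegree $(p,q)$ and the classical $(s,t)$ is set up carefully, so the proof will mostly consist of writing down that dictionary and invoking Theorem \ref{main}.

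I do not anticipate a serious obstacle here; the only point requiring genuine care is keeping the three degree conventions straight — the motivic bidegree $(p)[q]$ versus $(q)[p]$ conventions used in the paper, the doubling $t = 2u$ coming from Proposition \ref{iso} and Theorem \ref{main}, and the classical Adams indexing $(s,t)$ with stem $t-s$. A sloppy sign or a swapped coordinate would break one of the three cases, so I would state the substitution $s = 2q-p$, $t = 2q$, $u = q$ explicitly and then check that each hypothesis $p>2q$, $p<q$, $q<0$ maps to one of the three standard vanishing conditions $s<0$, $t-s<0$, $t<0$ for $Ext_{\A}^{s,t}(\F_2,\F_2)$. The proof then closes immediately by Theorem \ref{main}.
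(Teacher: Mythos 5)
Your proposal is correct and follows exactly the route the paper takes: read off $\pi_{p,q}(\X^{\wedge}_{\hz}) \cong Ext_{\A}^{2q-p,q}(\F_2,\F_2)$ from Theorem \ref{main} and translate the three vanishing cones $s<0$, $t-s<0$, $t<0$ for classical $Ext_\A^{s,t}$ into $p>2q$, $p<q$, $q<0$ respectively. One small notational slip at the end: you write the dictionary as ``$s=2q-p$, $t=2q$, $u=q$'' — that $t$ is the motivic topological internal degree of the tri-grading, whereas the classical internal degree appearing as the second index of $Ext_\A$ is $u=q$ (equivalently $t/2$, reflecting the degree-doubling in Proposition \ref{iso}); your actual case analysis in the middle paragraph uses $t=q$ for $Ext_\A^{s,t}$, which is the correct substitution, so the argument is sound, but the final recap should read $s=2q-p$, $t=q$ for the classical bigrading.
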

\begin{proof}
	The vanishing regions are due to Theorem \ref{main}, since 
	$$\pi_{p,q}(\X^{\wedge}_{\hz})  \cong Ext_{\A}^{2q-p,q}(\F_2,\F_2)$$ 
	and the cohomology of the Steenrod algebra $Ext_{\A}^{s,t}(\F_2,\F_2)$ vanishes when $s<0$ or $t-s<0$, i.e. $p>2q$ or $p<q$. Since $Ext_{\A}^{s,t}(\F_2,\F_2)\cong 0$ also for $t<0$, we have that the isotropic homotopy groups vanish also for $q<0$, which completes the proof. 
\end{proof}

\section{Higher structure of isotropic homotopy groups}

The isomorphism of Theorem \ref{main} establishes  a connection between the isotropic motivic world and the classical topological one. More precisely, a very important object in classical homotopy theory, namely the cohomology of the classical Steenrod algebra, is realised as the stable motivic homotopy groups of the $\hz$-completed isotropic sphere spectrum. Note that this isomorphism is at the moment only additive, but it is well known that the cohomology of the Steenrod algebra, being the cohomology of a dga, carries a higher structure in the form of Massey products. What we would like to know is if there exists a higher structure on $\X$ inducing a higher structure on its stable motivic homotopy groups which coincides with that of the cohomology of the Steenrod algebra. This way the isomorphism obtained in the previous section would be not only additive, but it would respect also higher products.

\begin{prop}\label{rob}
$\X$ is a motivic $E_{\infty}$-ring spectrum.
\end{prop}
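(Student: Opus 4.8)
The plan is to realise $\X$ as an idempotent object in the symmetric monoidal category $\SH(k)$ and then invoke the general fact that idempotent algebras carry a canonical (indeed essentially unique) $E_\infty$-structure. First I would recall, as already noted in Section 2, that $\X' = \Sigma^\infty\check{C}(Q)_+$ is an idempotent in $\SH(k)$, i.e. the multiplication $\X' \wedge \X' \to \X'$ is an equivalence, and that the cofibre sequence $\X' \to \s \to \X \to \X'[1]$ together with $\X' \wedge \X \cong 0$ forces $\X \wedge \X \to \X$ (the map induced by the unit $\s \to \X$ smashed with $\X$, using $\s \wedge \X \cong \X$) to be an equivalence as well. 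So $\X$ is a \emph{smashing} idempotent: the localization functor $L = \X \wedge (-)$ of Proposition \ref{loc} is smashing, and $\X = L(\s)$.

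The key step is then to apply the structural result on idempotents in a presentably symmetric monoidal stable $\infty$-category (or, if one prefers a model-categorical formulation, in the symmetric monoidal model category of motivic symmetric spectra underlying $\SH(k)$): if $e$ is an object with $e \wedge e \xrightarrow{\sim} e$ via a map that is compatible with a map $\s \to e$, then $e$ admits a unique structure of commutative algebra (an $E_\infty$-ring) refining this data, and moreover the space of such structures is contractible. Concretely, $\X$-modules form the full subcategory $\T = \SH(k/k)$, which is a smashing localization, hence itself presentably symmetric monoidal with unit $\X$; the unit of a presentably symmetric monoidal stable $\infty$-category is canonically an $E_\infty$-ring, and this $E_\infty$-structure transports back along the (lax symmetric monoidal) inclusion $\T \hookrightarrow \SH(k)$ to equip $\X$ with an $E_\infty$-structure in $\SH(k)$ itself. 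I would cite the relevant statements from Lurie's \emph{Higher Algebra} on idempotent algebras, or alternatively the treatment of smashing localizations and idempotent $E_\infty$-rings in the stable homotopy literature.

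The main obstacle, as I see it, is purely a matter of bookkeeping rather than mathematics: one must be careful that the $E_\infty$-structure produced at the $\infty$-categorical level actually lives in the triangulated category $\SH(k)$ in the sense required for the later Toda-bracket/Massey-product comparison, and that the unit map $\s \to \X$ is a map of $E_\infty$-rings. This is automatic from the contractibility of the relevant mapping space of idempotent structures (the identity on $\X$ is the unique idempotent map, and the comparison of $\wedge$-structures is forced), but it should be spelled out. A secondary technical point is to know that $\SH(k)$ is presentably symmetric monoidal and underlies a symmetric monoidal $\infty$-category — this is standard (Robalo, or the model of motivic symmetric spectra) and needs only to be quoted. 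Once this is in place, the statement follows; no genuine computation is needed, which is exactly what makes the subsequent identification of Toda brackets with Massey products in Section 6 possible.
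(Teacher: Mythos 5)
Your argument is essentially identical to the paper's: both recognise $\X$ as an idempotent object of $\SH(k)$ via the unit map $\s \to \X$ and invoke Lurie's \emph{Higher Algebra} (the paper cites Proposition 4.8.2.9) to conclude that idempotent objects carry a canonical $E_\infty$-structure. Your extra remarks on smashing localizations and the contractibility of the space of such structures are correct but not needed for the paper's one-line deduction.
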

\begin{proof}
	It follows from \cite[Proposition 4.8.2.9]{Lu} since the map $\s \rightarrow \X$ exhibits $\X$ as an idempotent object of $\SH(k)$. 
\end{proof}

Since $\X$ is an $E_{\infty}$-ring spectrum, the motivic Adams spectral sequence converging to its stable motivic homotopy groups is multiplicative (see \cite[Theorem 2.3.3]{Ra}). By recalling that it collapses at the second page and by the Moss convergence theorem (see \cite[Theorem 1.2]{Mos} and \cite[Theorem 3.1.1]{I}), we deduce that the isomorphism
$$Ext_{\A(k/k)}^{s,t,u}(H(k/k),H(k/k)) \cong \pi_{t-s,u}(\X^{\wedge}_{\hz})$$
respects the higher structure. By combining this remark with Theorem \ref{e2} and Corollary \ref{boh}, one deduces the following result.

\begin{cor}\label{fin}
	Let $k$ be a flexible field. Then, the isomorphism
	$$\pi_{*,*'}(\X^{\wedge}_{\hz}) \cong Ext_{\A}^{2*'-*,*'}(\F_2,\F_2)$$
	is an isomorphism of rings sending Toda brackets in $\pi_{*,*'}$ to Massey products in $Ext$.
\end{cor}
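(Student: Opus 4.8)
The plan is to assemble the statement from three ingredients that are already in place: first, the identification of the $E_2$-page of the isotropic motivic Adams spectral sequence with $Ext_{\A(k/k)}(H(k/k),H(k/k))$, which by Theorem \ref{e2} and Corollary \ref{boh} is isomorphic, as a ring with higher products, to $Ext_{\A}(\F_2,\F_2)$ via the doubling map $Sq^r \mapsto Sq^{2r}$; second, the fact (Theorem \ref{main}) that the spectral sequence collapses at $E_2$, so that $E_2 = E_\infty$ and there is no room for hidden extensions by the vanishing regions of Proposition \ref{van}; and third, the multiplicative and higher-multiplicative structure on the spectral sequence itself, which requires knowing that $\X$, and hence $\X^{\wedge}_{\hz}$, is a ring spectrum with enough structure.

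First I would invoke Proposition \ref{rob}: since $\s \to \X$ exhibits $\X$ as an idempotent object, $\X$ is an $E_\infty$-ring spectrum, and therefore so is its $\hz$-nilpotent completion $\X^{\wedge}_{\hz}$ (nilpotent completion is lax symmetric monoidal, or one argues directly that the tower $\overline{\hz}_n$ is a tower of $\X$-modules). Consequently the isotropic motivic Adams spectral sequence for $\X$ — which by the discussion before Theorem \ref{lab} is literally the motivic Adams spectral sequence for the spectrum $\X$ — is a multiplicative spectral sequence in the sense of \cite[Theorem 2.3.3]{Ra}, and moreover carries Massey products on each page converging to Toda brackets in $\pi_{*,*'}(\X^{\wedge}_{\hz})$. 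The precise statement that Massey products on $E_r$ detect Toda brackets in homotopy, with the expected indeterminacy, is the Moss convergence theorem; I would cite \cite[Theorem 1.2]{Mos} together with its motivic incarnation \cite[Theorem 3.1.1]{I}. Because the spectral sequence collapses at $E_2$, the hypotheses of Moss's theorem (a form of ``no crossing differentials'') are vacuously satisfied, and the indeterminacies of the bracket and of the Massey product match up.

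Next I would chase the ring and higher-product structure across the chain of identifications. The collapse at $E_2$ gives an associated-graded identification $\mathrm{gr}\,\pi_{*,*'}(\X^{\wedge}_{\hz}) \cong E_2 = Ext_{\A(k/k)}(H(k/k),H(k/k))$ as rings; the vanishing regions of Proposition \ref{van} force the filtration to have at most one nonzero piece in each tridegree $(s,t,u)$ with $t = 2u$ and to be zero otherwise, so the associated graded is the whole thing and the ring isomorphism lifts to $\pi_{*,*'}(\X^{\wedge}_{\hz}) \cong Ext_{\A(k/k)}(H(k/k),H(k/k))$ honestly, not just up to filtration. Composing with the isomorphism of Theorem \ref{e2}, which by construction is induced by an exact functor and hence respects Yoneda products and Massey products, and then with the doubling isomorphism $\A \cong \G$ of Proposition \ref{iso}, yields $\pi_{*,*'}(\X^{\wedge}_{\hz}) \cong Ext_{\A}^{2*'-*,*'}(\F_2,\F_2)$ as rings with all Massey products. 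Finally, Moss's theorem translates Massey products on the $E_2 = E_\infty$ page into Toda brackets in $\pi_{*,*'}(\X^{\wedge}_{\hz})$, which is exactly the assertion of the corollary.

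The main obstacle is the bookkeeping around the Moss convergence theorem in the motivic, filtered setting: one must check that the relevant Massey products are defined (the requisite products of earlier classes vanish), that no differentials interfere — which here is free since the spectral sequence degenerates — and that the indeterminacy of the Toda bracket in $\pi_{*,*'}$ coincides with that of the Massey product in $Ext$, so that the correspondence is well-defined as stated and not merely a containment. Since the filtration on $\pi_{*,*'}(\X^{\wedge}_{\hz})$ has length one in every relevant tridegree, crossing-filtration subtleties disappear and this reduces to a direct application of \cite[Theorem 3.1.1]{I}; the only real content is verifying that the multiplicative structure on the isotropic Adams spectral sequence is the one induced by the $E_\infty$-structure on $\X$ and that it is compatible, under the identifications above, with the standard dga structure computing $Ext_{\A}(\F_2,\F_2)$ — which follows because all the comparison maps in Section 5 are induced by exact (lax monoidal) functors.
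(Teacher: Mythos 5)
Your proposal is correct and follows essentially the same route as the paper: use Proposition \ref{rob} to get the $E_\infty$-structure on $\X$, deduce multiplicativity of the isotropic Adams spectral sequence from Ravenel, invoke Moss's convergence theorem (classical and motivic), and combine the collapse at $E_2$ with Theorem \ref{e2} and Corollary \ref{boh}. The extra care you take with the length-one filtration and the matching of indeterminacies is a useful amplification, but the argument is the one the paper gives.
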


\footnotesize{
	}
	
\noindent {\scshape Mathematisches Institut, Ludwig-Maximilians-Universit\"at M\"unchen}\\
fabio.tanania@gmail.com

\end{document}